\newenvironment{rcases}{%
  \left.%
  \begin{cases}}%
{\end{cases}\right\rbrace}
\newtheorem{defi}{Definition}[section]
\newtheorem{lemma}[defi]{Lemma}
\newtheorem{theorem}[defi]{Theorem}
\newtheorem{remark}[defi]{Remark}
\newtheorem{fol}[defi]{Corollary}
\newtheorem{prob}[defi]{Problem}
\title{A Unified Approach to Scalar, Vector, and Tensor Slepian Functions on the Sphere and Their Construction by a Commuting Operator}
\author[$\dagger$]{V.~Michel}
\author[$\star$]{A.~Plattner}
\author[$\dagger$]{K.~Seibert}
\affil[$\dagger$]{Geomathematics Group, Department of Mathematics, University of Siegen, Germany}
\affil[$\star$]{Department of Geological Sciences, The University of Alabama, Tuscaloosa, AL, USA}
\begin{document}

\maketitle

\tableofcontents
\vspace*{2cm}

\noindent\textbf{MSC2020 classification:} 33C55, 41A10, 41A63, 42C05, 42C10, 42C25, 43A90, 45C05, 86-08.
\newpage

\noindent\textbf{Abstract}
\newline
\newline
We present a unified approach for constructing Slepian functions
--- also known as prolate spheroidal wave functions --- on the sphere
for arbitrary tensor ranks including scalar, vectorial, and rank 2
tensorial Slepian functions, using spin-weighted spherical harmonics.
For the special case of spherical cap regions, we derived commuting
operators, allowing for a numerically stable and computationally
efficient construction of the spin-weighted spherical-harmonic-based
Slepian functions. Linear relationships between the spin-weighted and
the classical scalar, vectorial, tensorial, and higher-rank spherical
harmonics allow the construction of classical spherical-harmonic-based
Slepian functions from their spin-weighted counterparts, effectively
rendering the construction of spherical-cap Slepian functions for any
tensorial rank a computationally fast and numerically stable task.

\section{Introduction}

Functions cannot simultaneously have both a spectral and temporal (or spatial) finite support \cite{freedenmichelsimons,NarcWard1996}. In scientific or engineering applications, however, it may be desirable to represent signals in a time-limited but spectrally concentrated manner. Slepian, Landau, and Pollak created a suitable orthogonal basis for a range of Euclidean domains \cite{landaupollak, slepianpaper, slepianpollak}, see also \cite{grunbaum, simons}. Geoscientific or planetary studies typically involve data or models on a sphere, or parts thereof. To reap the benefits of the spatiospectral analysis previously developed for Euclidean spaces, Albertella, Sans\`o, and Sneeuw \cite{sneeuw}, and later Simons, Dahlen, and Wieczorek \cite{simons,simonspaper} developed corresponding scalar-valued functions by spatiospectrally optimizing linear combinations of spherical harmonics and named the resulting orthogonal basis ``Slepian functions''.
These Slepian functions found a wide range of applications in fields such as geodesy and geophysics, gravimetry, geodynamics, cosmology, planetary science, biomedical science, and in computer science (see \cite{jahn, plattner} and the references therein).
\newline
\newline
As a result of their construction, Slepian functions can be orthonormalized on the sphere, while remaining orthogonal within the target region. The two end member families of Slepian functions include spatially concentrated -- spectrally limited bases, and spatially limited -- spectrally concentrated bases. Here, we limit our discussions on the former case.
\newline
\newline
Construction of the spatially concentrated -- spectrally limited Slepian functions requires solving a finite-dimensional algebraic eigenvalue problem. Depending on the region of interest and the bandlimit, the underlying matrix can become ill-conditioned, leading to an eigenvalue problem, which is numerically unstable to solve. For some special regions, alternative eigenvalue problems based on commuting operators were discovered for the scalar and the vectorial case on the sphere (see \cite{jahn, simons, simonspaper}). The alternative problems have the same eigenvectors but are numerically stable. 
\newline
\newline
The construction of Slepian functions for various tensorial ranks typically follows the recipe described in \cite{michelsimons}, where an abstract Hilbert space setup is used to describe the general ``construction manual'' of Slepian functions with a particular focus on ill-posed inverse problems but without commuting operators. In this article, we restrict our considerations on the case of a spherical cap region. We show that the salar, vectorial, and tensorial cases can be considered as particular cases of a generalized setup, which we present here.
\newline
\newline
For this purpose, we utilize the spin-weighted spherical harmonics of Newman and Penrose \cite{newmanpenrose} (see also \cite{handbook, diss}). As a consequence the eigenvalue problems for vectorial or higher-ranked Slepian function constructions, which are coupled when using the classical vector or tensor spherical harmonics, decouple. Besides reducing the dimension of the eigenproblem, this decoupling also facilitates the construction of a general commuting
operator for polar cap regions. The special case of spin weight $0$ leads to the known scalar Slepian functions (see also \cite{simons,simonspaper}), while the combination of spin weight $0$ with spin weight $\pm 1$ yields the known vector Slepian functions (see also \cite{jahn, plattner}).
\newline
\newline
 We demonstrate our method by explicitly constructing tensor (rank 2) Slepian functions using spin weights of $0$, $\pm1$, and $\pm2$. We compare the result to a system of Slepian functions constructed from the basis of the tensor spherical harmonics of Freeden, Gervens, and Schreiner \cite{freedenpaper}, by transforming the latter into the spin-weighted basis system. Tensor Slepian functions have been constructed before using a different ansatz by \cite{eshagh} for the basis of the tensor spherical harmonics of Martinec \cite{martinec}. Our general ansatz yields, for the first time, a commuting operator for the tensorial
case and it opens a way for the consideration of tensors of arbitrary ranks. This paper comprises some of the results of the thesis \cite{diss}.

\section{Preliminaries}

Before presenting the spin-weighted spherical harmonics by Newman and Penrose \cite{newmanpenrose}, upon which our unified Slepian construction is based, we recall the classical scalar, vector \cite{hill,morse}, and tensor spherical harmonics \cite{freedenpaper}. This will allow us to compare the two constructions in Theorem~\ref{relation_tens23}. And we describe a procedure to construct classical Slepian functions from spin-weighted Slepian functions, should the need arise. As usual, $\mathbb{N}$, $\mathbb{Q}$, $\mathbb{R}$, and $\mathbb{C}$ stand for the sets of positive integers, rational numbers, real numbers, and complex numbers, respectively. Correspondingly, $\mathbb{Q}_0^+ := \{x\in\mathbb{Q}\,|\,x\geq 0\}$ etc.

\subsection{Basics and Notations}

For completeness, we present the definitions for function spaces, norms, unit vectors and tensors, and operators that we use. For further details, see \cite{freeden, volker}.

\begin{defi}
Let $D\subset \mathbb R^n$ and $W\subset \mathbb{C}^m $. Then, $\mathrm C^{(k)}(D,W)$ is the space of all functions $F:D\to W$, which are at least differentiable to order $k \in \mathbb N_0$ and where the $k$-th derivative is continuous. If $W= \mathbb C $, then we denote $\mathrm C^{(k)}(D, \mathbb C ) =: \mathrm C^{(k)}(D)$ and if $k=0$, then we write $\mathrm C^{(0)}(D, W ) =: \mathrm C(D,W)$.
\newline
\newline
Analogously, we define $\mathrm c^{(k)}(D,w)$ for all vector functions $f:D\to w$, $w\subset \mathbb{C}^3 $, and $\boldsymbol{\mathrm c}^{(k)}(D,\boldsymbol w)$ for all second-rank tensor functions $\boldsymbol f:D\to \boldsymbol w$, $\boldsymbol w\subset \mathbb{C}^{3\times 3} $.
\end{defi}
 
\begin{defi}\label{Def:EuklNorm}
We define the inner product of two vectors $x,y\in\mathbb C^n$ by
\begin{equation*}\langle x,y\rangle := x\cdot \overline{y} := \sum_{j=1}^n x_j \overline{y_j}\end{equation*}
with the induced norm $|x|:=\sqrt{x\cdot\overline{x}}$. 
\end{defi}

\begin{defi}
The norm for $F\in\mathrm C(D)$, $D\subset \mathbb R^n$ compact, is given by
\begin{equation*} \Vert F\Vert_{\mathrm C(D)} := \sup_{x\in D} \vert F(x)\vert. \end{equation*}
Analogously, the norms on $\mathrm{C}(D,\mathbb{C}^m)$ are defined by using $|F(x)|$ in the sense of Definition \ref{Def:EuklNorm}. 
\end{defi}

\noindent The following definitions contain the notations for
the spherical geometry used in the construction of our Slepian functions.
\begin{defi}\label{defivarepsilon}
The unit sphere $\Omega$ of the three-dimensional Euclidean space $\mathbb R^3$ is represented by
\begin{equation*} \Omega=\left\lbrace\left. x\in\mathbb R^3 \ \right\vert \ \vert x\vert=1\right\rbrace. \end{equation*}
We will use the local orthonormal basis given by
\begin{equation*}\xi(t,\varphi)=\varepsilon^r=
\begin{pmatrix}
\sqrt{1-t^2} \ \cos\varphi \\
\sqrt{1-t^2} \ \sin\varphi\\
t
\end{pmatrix}, \qquad \varepsilon^\varphi=
\begin{pmatrix}
-\sin\varphi\\
\cos\varphi\\
0
\end{pmatrix},
\qquad \varepsilon^t =
\begin{pmatrix}
-t \ \cos\varphi\\
-t \ \sin\varphi\\
\sqrt{1-t^2}
\end{pmatrix}.\end{equation*}
Here, $t\in\lbrack -1,1\rbrack$ is the polar distance and
$\varphi\in\lbrack 0,2\pi)$ denotes the longitude. The polar
distance is related to the latitude $\theta \in \lbrack -\pi/2,\pi/2\rbrack$ through the relationship
$t=sin(\theta)$. Note that $\varepsilon^r$ is radially outward,
$\varepsilon^\varphi$ eastward and $\varepsilon^t$ northward. For
$t=-1$, we obtain the South pole and for $t=1$ the North pole.
Furthermore, we define the unit sphere without the poles
\begin{equation*} \Omega_0:=\Omega \setminus \left\lbrace \xi=\xi(t,\varphi)\ \vert \ t=\pm 1\right\rbrace, \end{equation*}
where $\xi=\xi(t,\varphi)$ is the polar coordinate representation of $\xi\in\Omega$.
\end{defi}

\begin{defi}
We define the tensors
\begin{equation*}\boldsymbol{\mathrm i}_\mathrm{tan}(\xi) := \varepsilon^\varphi \otimes \varepsilon^\varphi + \varepsilon^t \otimes \varepsilon^t\end{equation*}
and
\begin{equation*}\boldsymbol{\mathrm j}_\mathrm{tan}(\xi) := \varepsilon^t \otimes \varepsilon^\varphi - \varepsilon^\varphi \otimes \varepsilon^t\end{equation*}
for $\xi=\xi(t,\varphi)\in\Omega$. This definition follows the construction presented by \cite{freeden}. 
\end{defi}

\noindent In the following, we use the shorthand form for differentiation
\begin{equation*} \partial_x:=\frac{\partial}{\partial x} \end{equation*}
to define some well-known Cartesian and spherical differential operators.

\begin{defi}
The gradient is defined by
\begin{equation*} \nabla_x:= \left(\partial_{x_i}\right)_{i=1,2,3}=\begin{pmatrix} \partial_{x_1}\\ \partial_{x_2}\\ \partial_{x_3}\end{pmatrix}\end{equation*}
and the Laplace operator by
\begin{equation*} \Delta_x:=\partial_{x_1}^2+\partial_{x_2}^2+\partial_{x_3}^2 \end{equation*}
\noindent for $x=(x_1, x_2, x_3)^\mathrm T \in D\subset \mathbb R^3$.
\end{defi}

\begin{defi}\label{nablaLdefi}
The surface gradient
\begin{equation*}\nabla_\xi^\ast := \varepsilon^\varphi \ \frac{1}{\sqrt{1-t^2}} \ \partial_\varphi + \varepsilon^t \ \sqrt{1-t^2} \ \partial_t\end{equation*}
and the surface curl gradient
\begin{equation*}L_\xi^\ast := - \varepsilon^\varphi \ \sqrt{1-t^2} \ \partial_t + \varepsilon^t \ \frac{1}{\sqrt{1-t^2}} \ \partial_\varphi\end{equation*}
for $\xi=\xi(t,\varphi)\in\Omega$ are differential operators on the sphere such that $\nabla_{r\xi}=\left( \xi \partial_r+\frac{1}{r} \ \nabla^\ast_\xi\right)$ for $r\in \mathbb R^+$, $\xi \in\Omega$ and $L_\xi^\ast=\xi\wedge\nabla_\xi^\ast$, where $\wedge$ represents the vector product in $\mathbb R^3$.
\newline
\newline
Moreover,
\begin{equation*}\Delta^\ast_\xi :=\partial_t \left( \left(1-t^2\right) \partial_t\right)+ \ \frac{1}{1-t^2} \ \partial^2_\varphi\end{equation*}
is the Beltrami operator such that $\Delta^\ast=\nabla^\ast \cdot \nabla^\ast = L^\ast \cdot L^\ast$ and $\Delta_{r\xi} = \frac{\partial^2}{\partial r^2}+\frac{2}{r}\frac{\partial}{\partial r}+\frac{1}{r^2}\Delta^\ast_\xi$.
\end{defi}

\noindent This enables us to formulate Green's second surface identity \cite{volker}. Note that all integrals we use are Lebesgue integrals.

\begin{theorem}\label{greenoriginal}
Green's second surface identity is given by
\begin{equation*}\int_\Gamma \left(F(\xi)\Delta^\ast_\xi G(\xi)-G(\xi)\Delta^\ast_\xi F(\xi)\right) \ \mathrm d\omega(\xi)= \int_{\partial\Gamma}\left(F(\xi)\frac{\partial}{\partial\nu(\xi)} G(\xi)-G(\xi)\frac{\partial}{\partial\nu(\xi)} F(\xi)\right) \ \mathrm d\sigma(\xi),\end{equation*}
where $F,G\in \mathrm C^{(2)}\left(\overline{\Gamma}\right)$, $\Gamma\subset \Omega$ with a sufficiently smooth boundary and $\nu$ is the outward unit normal vector field to $\partial\Gamma$.
\newline
\newline
As a special case, Green's second surface identity over the entire unit sphere leads to
\begin{equation*}\int_\Omega \left(F(\xi)\Delta^\ast_\xi G(\xi)-G(\xi)\Delta^\ast_\xi F(\xi)\right) \ \mathrm d\omega(\xi)=0\end{equation*}
for $F,G\in \mathrm C^{(2)}(\Omega)$.
\end{theorem}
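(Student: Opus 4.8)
The plan is to obtain the second identity from a Green-type \emph{first} surface identity, mirroring the classical planar argument. First I would establish the surface analogue of Gauss's divergence theorem: for a continuously differentiable tangential vector field $f$ on $\overline{\Gamma}$,
\[
\int_\Gamma \nabla^\ast_\xi \cdot f(\xi)\, \mathrm d\omega(\xi) = \int_{\partial\Gamma} f(\xi)\cdot\nu(\xi)\, \mathrm d\sigma(\xi).
\]
This is the Riemannian divergence theorem on the submanifold $\Gamma\subset\Omega$; it can be proved by pulling back to the local coordinates $(t,\varphi)$ of Definition~\ref{defivarepsilon}, applying the ordinary planar Gauss theorem on each chart, and patching with a partition of unity on $\overline{\Gamma}$ (legitimate since $\partial\Gamma$ is smooth, and one may rotate charts so that they avoid the coordinate singularities at the poles).

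Applying this with $f=F\,\nabla^\ast_\xi G$, together with the product rule $\nabla^\ast_\xi\cdot(F\,\nabla^\ast_\xi G)=\nabla^\ast_\xi F\cdot\nabla^\ast_\xi G + F\,\Delta^\ast_\xi G$ and the relation $\Delta^\ast=\nabla^\ast\cdot\nabla^\ast$ from Definition~\ref{nablaLdefi}, yields Green's first surface identity
\[
\int_\Gamma\left(F\,\Delta^\ast_\xi G+\nabla^\ast_\xi F\cdot\nabla^\ast_\xi G\right)\mathrm d\omega(\xi)=\int_{\partial\Gamma}F\,\bigl(\nabla^\ast_\xi G\cdot\nu\bigr)\,\mathrm d\sigma(\xi).
\]
Since $\nu$ is tangential to $\Omega$ along $\partial\Gamma$ and $\nabla^\ast_\xi$ is the tangential gradient, we have $\nabla^\ast_\xi G\cdot\nu=\partial G/\partial\nu$. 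Interchanging the roles of $F$ and $G$ and subtracting the two first identities then cancels the symmetric term $\nabla^\ast_\xi F\cdot\nabla^\ast_\xi G$, leaving precisely the asserted second identity. For the full-sphere special case, $\partial\Omega=\emptyset$, so the right-hand side vanishes.

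The main obstacle is the coordinate singularity at the two poles, where the basis $\varepsilon^\varphi,\varepsilon^t$ and hence the representations of $\nabla^\ast$ and $\Delta^\ast$ degenerate, so the divergence theorem in $(t,\varphi)$-coordinates is not directly applicable there. For a general $\Gamma$ this is absorbed into the partition of unity via a chart rotated away from the poles. For the closed-sphere statement I would instead argue by exhaustion: apply the identity on $\Omega$ with two small polar caps of angular radius $\delta$ removed, and let $\delta\to 0$. Because $F,G\in\mathrm C^{(2)}(\Omega)$ have bounded first derivatives on the compact set $\Omega$ while the length of each cap boundary is $O(\delta)$, the boundary contributions tend to $0$, which establishes the vanishing of the integral over all of $\Omega$.
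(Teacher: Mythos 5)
Your proposal is correct and matches the classical argument the paper itself defers to (\cite[p.~448]{AmEsch08}): the surface divergence theorem for tangential fields, applied to $f=F\,\nabla^\ast_\xi G$ together with the product rule and $\Delta^\ast=\nabla^\ast\cdot\nabla^\ast$, gives Green's first surface identity, and antisymmetrizing in $F$ and $G$ yields Theorem~\ref{greenoriginal}, with the identification $\nabla^\ast_\xi G\cdot\nu=\partial G/\partial\nu$ justified exactly as you say because $\nu$ is tangent to $\Omega$ along $\partial\Gamma$. Your pole-exhaustion argument for the full-sphere case is also sound (boundary length $O(\delta)$ against derivatives bounded on the compact sphere), though it is dispensable if one invokes the divergence theorem directly on the closed manifold $\Omega$, where the boundary term is vacuous.
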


\noindent The proof to Theorem~\ref{greenoriginal} can be found in \cite[p.\ 448]{AmEsch08}. The construction of scalar, vector, and tensor Slepian functions
requires norms based on inner products. The following Hilbert spaces will satisfy this requirement. 
\begin{defi}
 
For a (Lebesgue) measurable set $D \subset \mathbb{R}^n$, we denote
with $ \mathrm L^2(D, \mathbb{C}^m ) $ the Hilbert space of (equivalence classes of almost everywhere identical) functions $F:D\to \mathbb{C}^m $ with
\begin{equation*}\Vert F\Vert_2:= \Vert F\Vert_{\mathrm L^{ 2 }(D, \mathbb{C}^m )} :=\left(\int_D \vert F(x)\vert^2 \ \mathrm dx\right)^\frac{1}{2}<\infty.\end{equation*}
\end{defi}

\noindent We focus our attention on the cases $m=1,3,3\times 3$, where $m=1$ yields scalar, $m=3$ vectorial, and $m=3\times 3$ tensorial function spaces. Consequently, we define $\mathrm L^2(D, \mathbb{C} )=:\mathrm L^2(D)$, $\mathrm L^2(D, \mathbb{C}^3 )=:\mathrm l^2(D)$, and $\mathrm L^2(D, \mathbb{C}^{3\times 3} )=:\boldsymbol{\mathrm l}^2(D)$. The inner products are denoted by
\begin{equation*}\left\langle F,G\right\rangle_{\mathrm L^2( D )} := \int_{ D } F(\xi) \overline{G(\xi)} \ \mathrm d\omega(\xi)\end{equation*}
for $m=1$, by
\begin{equation*}\left\langle f,g\right\rangle_{\mathrm l^2( D )} := \int_{ D } f(\xi) \cdot \overline{g(\xi)} \ \mathrm d\omega(\xi)\end{equation*}
for $m=3$, and by
\begin{equation*}\left\langle \boldsymbol f,\boldsymbol g\right\rangle_{\boldsymbol{\mathrm l}^2( D )} := \int_{ D } \boldsymbol f(\xi) : \overline{\boldsymbol g(\xi)} \ \mathrm d\omega(\xi)\end{equation*}
for $m=3\times 3$.

\begin{theorem}\label{LpC2}
The relationships between the Hilbert spaces and the spaces of continuous functions are 
\begin{eqnarray*}
 \overline{\mathrm C(\Omega)}^{\Vert \cdot
 \Vert_{\mathrm L^2(\Omega)}} &=& \mathrm L^2(\Omega),\\
 \overline{\mathrm c(\Omega)}^{\Vert \cdot
 \Vert_{ \mathrm{l}^2 (\Omega)}} &=& \mathrm l^2(\Omega),\\
 \overline{\boldsymbol{\mathrm c}(\Omega)}^{\Vert \cdot
 \Vert_{ \boldsymbol{\mathrm l}^2 (\Omega)}} &=& \boldsymbol{\mathrm{l}}^2(\Omega).
\end{eqnarray*}
\end{theorem}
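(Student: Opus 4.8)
The plan is to prove the scalar identity $\overline{\mathrm C(\Omega)}^{\Vert\cdot\Vert_{\mathrm L^2(\Omega)}} = \mathrm L^2(\Omega)$ first and then deduce the vectorial and tensorial identities componentwise. The inclusion ``$\subseteq$'' is immediate once one observes that $\mathrm C(\Omega)$ embeds into $\mathrm L^2(\Omega)$: since $\Omega$ is compact, every $F\in\mathrm C(\Omega)$ is bounded, and as the surface measure $\omega$ is finite, this forces $F\in\mathrm L^2(\Omega)$; the $\mathrm L^2$-closure of a subset of $\mathrm L^2(\Omega)$ is then automatically contained in $\mathrm L^2(\Omega)$. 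The substance of the statement is therefore the reverse inclusion, that is, the density of $\mathrm C(\Omega)$ in $\mathrm L^2(\Omega)$.

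For density I would reduce the problem in two standard steps. First, by the construction of the Lebesgue integral, the simple functions --- finite $\mathbb{C}$-linear combinations $\sum_k c_k\chi_{A_k}$ of indicator functions of measurable sets $A_k\subseteq\Omega$ --- are dense in $\mathrm L^2(\Omega)$. By linearity it then suffices to approximate a single indicator $\chi_A$ of a measurable set $A\subseteq\Omega$ by continuous functions in the $\mathrm L^2$-norm. To this end I would exploit that $\Omega$ is a compact metric space and that $\omega$ is a finite regular Borel measure on $\Omega$. Given $\varepsilon>0$, regularity furnishes a compact set $K$ and an open set $U$ with $K\subseteq A\subseteq U$ and $\omega(U\setminus K)<\varepsilon^2$, and Urysohn's lemma yields $g\in\mathrm C(\Omega)$ with $0\le g\le 1$, $g\equiv 1$ on $K$ and $\operatorname{supp}g\subseteq U$. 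Then $\vert\chi_A-g\vert\le\chi_{U\setminus K}$ pointwise, so that
\begin{equation*}
\Vert\chi_A-g\Vert_{\mathrm L^2(\Omega)}^2\le\omega(U\setminus K)<\varepsilon^2,
\end{equation*}
and combining the two reductions establishes the scalar density.

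For the vectorial and tensorial cases I would argue componentwise. A function $f\in\mathrm l^2(\Omega)=\mathrm L^2(\Omega,\mathbb C^3)$ has each Cartesian component $f_j\in\mathrm L^2(\Omega)$, and $\Vert f\Vert_{\mathrm l^2(\Omega)}^2=\sum_{j=1}^3\Vert f_j\Vert_{\mathrm L^2(\Omega)}^2$. Approximating each $f_j$ by a continuous scalar function $g_j$ via the scalar result and assembling $g=(g_1,g_2,g_3)\in\mathrm c(\Omega)$ gives an $\mathrm l^2$-approximation of $f$ whose error is controlled by the sum of the three component errors; the tensor case $\boldsymbol{\mathrm l}^2(\Omega)$ is identical with the nine entries of the $\mathbb C^{3\times 3}$-valued function.

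The only genuine technical ingredient is the approximation of indicators by continuous functions, which rests on the regularity of the surface measure and on Urysohn's lemma. Neither presents a real obstacle for a compact metric space carrying a finite Borel measure, so I would expect the write-up either to cite these classical facts or to supply the short Urysohn estimate above; the remaining passages (simple functions dense in $\mathrm L^2$, and the componentwise reduction) are routine.
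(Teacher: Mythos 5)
Your proof is correct, but it is worth noting that the paper itself does not prove Theorem~\ref{LpC2} at all: it simply cites \cite[Theorems 3.2.6 and 3.6.2]{VoigtWloka1975}, so you have supplied the self-contained argument the paper delegates to a textbook. Your route is the standard measure-theoretic one and all the steps go through: simple functions are dense in $\mathrm L^2$ of any measure space; the surface measure $\omega$ on the compact metric space $\Omega$ is a finite Borel measure, hence outer regular by open sets and inner regular by closed sets, and since $\Omega$ is compact, closed sets are compact, which justifies the choice of $K\subseteq A\subseteq U$ with $\omega(U\setminus K)<\varepsilon^2$; and the componentwise reduction is legitimate because $\vert f(\xi)\vert^2=\sum_j\vert f_j(\xi)\vert^2$ (and the analogous Frobenius identity for the nine tensor entries), so the squared $\mathrm l^2$- and $\boldsymbol{\mathrm l}^2$-errors are exactly the sums of the scalar component errors, and assembling the continuous approximants entrywise keeps you inside $\mathrm c(\Omega)$, respectively $\boldsymbol{\mathrm c}(\Omega)$. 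Two cosmetic remarks: your appeal to Urysohn with $\operatorname{supp} g\subseteq U$ is slightly stronger than what you use --- the estimate $\vert\chi_A-g\vert\le\chi_{U\setminus K}$ only needs $g\equiv 0$ on $\Omega\setminus U$, which on a metric space the explicit function $g(\xi)=\mathrm{dist}(\xi,\Omega\setminus U)/\bigl(\mathrm{dist}(\xi,K)+\mathrm{dist}(\xi,\Omega\setminus U)\bigr)$ already provides, so no topological lemma is really required; and if one works with the completed surface measure, a Lebesgue-measurable $A$ should first be replaced by a Borel set differing from it by a null set before invoking regularity. What your write-up buys over the paper's citation is transparency and elementarity; what the citation buys is brevity and the fact that the reference covers the vector- and tensor-valued statements in one stroke.
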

\noindent The proof to Theorem~\ref{LpC2} can be found in \cite[Theorems 3.2.6 and 3.6.2]{VoigtWloka1975}.

\subsection{Scalar, Vector, and Tensor Spherical Harmonics}
 
Previously published constructions of scalar, vector, and tensor Slepian functions are based on classical spherical harmonics. Our unified approach for constructing Slepian functions is based on spin-weighted spherical harmonics. This approach will allow us to handle scalar, vectorial, and tensorial functions with the same setup. The reason is that spin-weighted spherical harmonics provide us with an equivalent representation of scalar, vector, and tensor spherical harmonics. Before explaining this in detail, we will first recapitulate a classical definition of the latter functions within this section.

\begin{defi} \label{spherharm}
We denote the (scalar) fully normalized spherical harmonics by
\begin{equation*}Y_{n,j}(\xi(t,\varphi)):= X_{n,j}(t) e^{i j\varphi} :=
\begin{cases}
(-1)^j \sqrt{\frac{2n+1}{4\pi}}\sqrt{\frac{(n-j)!}{(n+j)!}}\ P_{n,j}(t)e^{ij\varphi}, & j\geq 0\\
(-1)^j \ \overline{Y_{n,-j}(\xi(t,\varphi))}, & j<0
\end{cases},\end{equation*}
with the fully normalized associated Legendre functions given by
\begin{equation*}X_{n,j}(t) :=
\begin{cases}
(-1)^j \sqrt{\frac{2n+1}{4\pi}}\sqrt{\frac{(n-j)!}{(n+j)!}}\ P_{n,j}(t), & j\geq 0\\
(-1)^j X_{n,-j}(t), & j<0
\end{cases},\end{equation*}
the associated Legendre functions by
\begin{equation*}P_{n,j}(t):= \left(1-t^2\right)^{\frac{j}{2}} \left( \frac{\mathrm d}{\mathrm dt} \right)^j P_n(t),\end{equation*}
and the Legendre polynomials given by the Rodriguez formula
\begin{equation*}P_n(t) := \frac{1}{2^n n!} \ \left( \frac{\mathrm d}{\mathrm dt} \right)^n \left(t^2-1\right)^n, \end{equation*}
where $t\in \lbrack -1,1 \rbrack$, $\varphi\in[0,2\pi]$, $\xi \in \Omega$, and $n\in \mathbb N_0$, $j=-n,\dots,n$.
\end{defi}

\noindent The set of fully normalized spherical harmonics constructs a basis of $\left( \mathrm L^2(\Omega), \langle \cdot,\cdot \rangle_{\mathrm L^2(\Omega)} \right)$, see e.g.~\cite{volker} for a proof.
\newline
\newline
The following sets of functions form basis systems for $\left( \mathrm l^2(\Omega), \langle \cdot,\cdot \rangle_{\mathrm l^2(\Omega)} \right)$ and $\left( \boldsymbol{\mathrm l}^2(\Omega), \langle \cdot,\cdot \rangle_{\boldsymbol{\mathrm l}^2(\Omega)} \right)$, respectively, see e.g.~\cite{freeden2} for a proof.

\begin{defi}\label{vectorharm}
The vector spherical harmonics by Hill \cite{hill} (also called the Morse-Feshbach vector spherical harmonics, see \cite{morse}) are defined by
\begin{align*}
y_{n,j}^{(1)}(\xi)&:= \xi Y_{n,j}(\xi),\\
y_{n,j}^{(2)}(\xi)&:= \frac{1}{\sqrt{n(n+1)}} \ \nabla_\xi^\ast Y_{n,j}(\xi),\\
y_{n,j}^{(3)}(\xi)&:= \frac{1}{\sqrt{n(n+1)}} \ L_\xi^\ast Y_{n,j}(\xi)
\end{align*}
for $\xi \in \Omega$, $n\in\mathbb N_0$, $n\geq 0_i$, $j=-n,\dots,n$, and $i=1,2,3$ with
\begin{equation*} 0_i:=\begin{cases} 0, & i=1\\ 1, & i=2,3\end{cases}.\end{equation*}
 
\noindent We define the function spaces 
\begin{equation*}
 \mathrm{harm}_{n}(\Omega) := \mathrm{span}\, \left\lbrace y_{n,j}^{(i)} \Bigl\vert 1\leq i\leq 3 \text{ and } j=-n,\ldots,n \right\rbrace
\end{equation*}
and the function spaces
\begin{equation*}
 \mathrm{harm}_{p,\ldots,q}(\Omega) := \bigoplus_{n=p}^q \mathrm{harm}_{n}(\Omega)
\end{equation*}
for $p\leq q$.
\end{defi}

\noindent Note that the components normal to the unit sphere $\Omega$ are described by $y_{n,j}^{(1)}$, while $y_{n,j}^{(2)}$ and $y_{n,j}^{(3)}$ describe the tangential components. 

\begin{defi}\label{tensorharm}
The tensor spherical harmonics by Freeden, Gervens, and Schreiner \cite{freedenpaper} are defined by
\allowdisplaybreaks
\begin{align*}
\boldsymbol y_{n,j}^{(1,1)}(\xi)&:=\left(\xi \otimes \xi \right) Y_{n,j}(\xi),\\
\boldsymbol y_{n,j}^{(1,2)}(\xi)&:=\frac{1}{\sqrt{n(n+1)}} \ \left( \xi \otimes \nabla_\xi^\ast Y_{n,j}(\xi)\right),\\
\boldsymbol y_{n,j}^{(1,3)}(\xi)&:=\frac{1}{\sqrt{n(n+1)}} \ \left( \xi \otimes L_\xi^\ast Y_{n,j}(\xi)\right),\\
\boldsymbol y_{n,j}^{(2,1)}(\xi)&:=\frac{1}{\sqrt{n(n+1)}} \ \left( \nabla_\xi^\ast Y_{n,j}(\xi) \otimes \xi \right),\\
\boldsymbol y_{n,j}^{(2,2)}(\xi)&:=\frac{1}{\sqrt{2}} \ \boldsymbol{\mathrm i}_\mathrm{tan}(\xi) Y_{n,j}(\xi) \\
&=\frac{1}{\sqrt{2}} \ Y_{n,j}(\xi) \left( \varepsilon^\varphi \otimes \varepsilon^\varphi + \varepsilon^t \otimes \varepsilon^t \right),\\
\boldsymbol y_{n,j}^{(2,3)}(\xi)&:=\frac{1}{\sqrt{2n(n+1)(n(n+1)-2)}} \ \left\lbrack \left( \nabla_\xi^\ast \otimes \nabla_\xi^\ast -L_\xi^\ast \otimes L_\xi^\ast \right )Y_{n,j}(\xi) +2 \nabla_\xi^\ast Y_{n,j}(\xi) \otimes \xi \right\rbrack,\\
\boldsymbol y_{n,j}^{(3,1)}(\xi)&:=\frac{1}{\sqrt{n(n+1)}} \ \left( L_\xi^\ast Y_{n,j}(\xi) \otimes \xi \right),\\
\boldsymbol y_{n,j}^{(3,2)}(\xi)&:=\frac{1}{\sqrt{2n(n+1)(n(n+1)-2)}} \ \left\lbrack \left( \nabla_\xi^\ast \otimes L_\xi^\ast +L_\xi^\ast \otimes \nabla_\xi^\ast \right )Y_{n,j}(\xi) +2 L_\xi^\ast Y_{n,j}(\xi) \otimes \xi \right\rbrack,\\
\boldsymbol y_{n,j}^{(3,3)}(\xi)&:=\frac{1}{\sqrt{2}} \ \boldsymbol{\mathrm j}_\mathrm{tan}(\xi) Y_{n,j}(\xi)\\
&=\frac{1}{\sqrt{2}} \ Y_{n,j}(\xi) \left( \varepsilon^t \otimes \varepsilon^\varphi - \varepsilon^\varphi \otimes \varepsilon^t \right)
\end{align*}
for $\xi \in \Omega$, $n\in\mathbb N_0$, $n\geq 0_{ik}$, $j=-n,\dots,n$, and $i,k=1,2,3$ with
\begin{equation*} 0_{ik}:=\begin{cases} 0, & (i,k)=(1,1),(2,2),(3,3)\\ 1, & (i,k)=(1,2),(1,3),(2,1),(3,1)\\ 2, & (i,k)=(2,3),(3,2)\end{cases}.\end{equation*}

\noindent As in \cite{freeden}, we define the function spaces 
\begin{equation*} 
 \boldsymbol{\mathrm{harm}}_{n}(\Omega) := \mathrm{span}\, \left\lbrace \boldsymbol y_{n,j}^{(i,k)} \Bigl\vert 1\leq i,k\leq 3 \text{ and } j=-n,\ldots,n \right\rbrace,
\end{equation*}
and the function spaces
\begin{equation*}
 \boldsymbol{\mathrm{harm}}_{p,\ldots,q}(\Omega) := \bigoplus_{n=p}^q \boldsymbol{\mathrm{harm}}_{n}(\Omega),
\end{equation*}
for $p\leq q$.
\end{defi}

\noindent With these definitions, $\boldsymbol y_{n,j}^{(1,1)}$ is normal, $\boldsymbol y_{n,j}^{(1,2)}$, $\boldsymbol y_{n,j}^{(1,3)}$ are left normal/right tangential, $\boldsymbol y_{n,j}^{(2,1)}$, $\boldsymbol y_{n,j}^{(3,1)}$ are left tangential/ right normal, and $\boldsymbol y_{n,j}^{(2,2)}$, $\boldsymbol y_{n,j}^{(2,3)}$, $\boldsymbol y_{n,j}^{(3,2)}$, $\boldsymbol y_{n,j}^{(3,3)}$ are tangential.

\subsection{Spin-Weighted Spherical Harmonics}

It is known that scalar, vector, and tensor spherical harmonics can be used to construct spherical Slepian functions (see e.g.~\cite{eshagh} for the tensorial case). In addition, commuting operators are known for the scalar and the vector Slepian case for spherical-cap regions (\cite{grunbaum, jahn}). The advantage of the spin-weighted spherical harmonics by Newman and Penrose \cite{newmanpenrose} (which we introduce in this section) over the classical spherical harmonics is that the former allow us to (i) derive a unified approach to construct Slepian functions for arbitary tensor ranks and (ii) construct commuting operators for spherical-cap regions for arbitary tensor ranks (see also \cite{handbook, diss}). 

\begin{defi}\label{defieth}
 Following the construction of \cite{newmanpenrose}, we define the spin-weighted differential operators $\eth_N:\mathrm C^{(1)}(\Omega_0)\to \mathrm C(\Omega_0)$ and $\overline \eth_N:\mathrm C^{(1)}(\Omega_0)\to \mathrm C(\Omega_0)$ of spin weight $N\in\mathbb Q$ by
\begin{align*}
\eth_N \ F(\xi)&:= \left( \sqrt{1-t^2} \ \partial_t + \frac{Nt-i \partial_\varphi}{\sqrt{1-t^2}}\right)F(\xi),\\
\overline{\eth}_N \ F(\xi)&:= \left( \sqrt{1-t^2} \ \partial_t - \frac{Nt-i \partial_\varphi}{\sqrt{1-t^2}}\right)F(\xi),
\end{align*}
where $\xi=\xi(t,\varphi)\in\Omega_0$ and $F\in \mathrm C^{(1)}(\Omega_0)$.
\end{defi}

\begin{defi}
 Symbol $\eth^M_N$ for $M\in \mathbb Q^+_0$ and $N\in\mathbb Q$ denotes the successive application of spin-weighted operators $\eth_k$ for spin weights $k=N, N+1, \ldots, N+M-1$ on a function $F\in\mathrm C^{(M)}(\Omega_0)$ such that
\begin{equation*}\eth^M_N \ F := \eth_{N+M-1} \eth_{N+M-2} \dots \eth_{N+1} \eth_N \ F.\end{equation*}
 Mutatis mutandis, we define
\begin{equation*}\overline{\eth}^M_N \ F := \overline{\eth}_{N-M+1} \overline{\eth}_{N-M+2} \dots \overline{\eth}_{N-1} \overline{\eth}_N \ F.\end{equation*}
The case $M=0$ denotes the identity operator 
\begin{equation*} \eth^0_N=\mathrm{Id}=\overline \eth^0_N. \end{equation*}
\end{defi}

\begin{defi}\label{newdef}
The spin-weighted spherical harmonics by Newman and Penrose \cite{newmanpenrose} (see also \cite{dray, goldberg, lewis, castillo, wiaux2, wiaux}) are defined for $n\in\mathbb N_0$, $N\in\mathbb Q$, $n\geq \vert N\vert$, and $j=-n,\dots,n$ by
\begin{equation*}{}_NY_{n,j} :=
\begin{cases}
\sqrt{\frac{(n-N)!}{(n+N)!}} \ \eth^N_0 \ Y_{n,j}, & 0\leq N\leq n\\
(-1)^N \sqrt{\frac{(n+N)!}{(n-N)!}} \ \overline{\eth}^{-N}_0 \ Y_{n,j}, & -n\leq N\leq 0\\
0,& n< \vert N\vert
\end{cases}.\end{equation*}
\end{defi}

\noindent Note that we define the spin-weighted spherical harmonics on $\Omega_0$, because the operators $\eth$ and $\overline\eth$ have singularities at the poles.
\newline
\newline
Lemma~\ref{defspin_allN} and Theorem~\ref{wigner} provide alternative formulations for the spin-weighted spherical harmonics. 

\begin{lemma}\label{defspin_allN}
The spin-weighted spherical harmonics fulfill for all $N\in\mathbb Z$, all $n\in\mathbb N_0$, and all $j=-n,\dots,n$ the following properties
\begin{equation} \eth_N \ {}_N Y_{n,j} = \sqrt{n(n+1)-N(N+1)} \ {}_{N+1} Y_{n,j}, \label{eqeth1}\end{equation}
and
\begin{equation} \overline{\eth}_N \ {}_N Y_{n,j} =-\sqrt{n(n+1)-N(N-1)} \ {}_{N-1} Y_{n,j}.\label{eqeth2} \end{equation}
\end{lemma}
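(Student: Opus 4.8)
My plan is to split the statement according to which direction the recursive Definition~\ref{newdef} already supplies. For $0\le N\le n-1$ the harmonic ${}_{N+1}Y_{n,j}$ arises from ${}_NY_{n,j}$ by one extra application of $\eth_N$, so (\ref{eqeth1}) reduces for these $N$ to comparing the two normalisation prefactors; writing $\alpha_N:=\sqrt{n(n+1)-N(N+1)}=\sqrt{(n-N)(n+N+1)}$, a short factorial computation gives exactly this coefficient. The same bookkeeping with the $\overline\eth$-chain proves (\ref{eqeth2}) for $-n+1\le N\le 0$, where the coefficient is $-\sqrt{n(n+1)-N(N-1)}=-\alpha_{N-1}$. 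Thus the two claims read $\eth_N\,{}_NY_{n,j}=\alpha_N\,{}_{N+1}Y_{n,j}$ and $\overline\eth_N\,{}_NY_{n,j}=-\alpha_{N-1}\,{}_{N-1}Y_{n,j}$, and what remains is the reverse directions ($\eth_N$ for $N<0$, $\overline\eth_N$ for $N>0$), the ends $N=\pm n$, and the trivial range $|N|>n$.

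The engine for the rest is a pair of operator identities. Setting $\mathcal D_N:=(1-t^2)\,\partial_t^2-2t\,\partial_t-(1-t^2)^{-1}(Nt-i\partial_\varphi)^2$, so that $\mathcal D_0=\Delta^\ast$ by Definition~\ref{nablaLdefi}, I would expand the two second-order compositions and collect terms to obtain, on $\mathrm C^{(2)}(\Omega_0)$,
\begin{equation*}\overline\eth_{N+1}\eth_N=\mathcal D_N+N,\qquad \eth_{N-1}\overline\eth_N=\mathcal D_N-N\end{equation*}
(both compositions share the second- and first-order part of $\mathcal D_N$ and differ from it only by the additive constant $\pm N$). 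Reindexing gives $\eth_N\overline\eth_{N+1}=\mathcal D_{N+1}-(N+1)$ and $\overline\eth_N\eth_{N-1}=\mathcal D_{N-1}+(N-1)$, and comparing $\eth_N\overline\eth_{N+1}\eth_N$ evaluated as $\eth_N(\mathcal D_N+N)$ and as $(\mathcal D_{N+1}-(N+1))\eth_N$ yields the intertwining relation $\mathcal D_{N+1}\eth_N=\eth_N(\mathcal D_N+2N+1)$, and symmetrically $\mathcal D_{N-1}\overline\eth_N=\overline\eth_N(\mathcal D_N-(2N-1))$.

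With these I would establish the eigenvalue equation $\mathcal D_N\,{}_NY_{n,j}=(N^2-n(n+1))\,{}_NY_{n,j}$ for $|N|\le n$ by induction outward from $N=0$, where it is just $\Delta^\ast Y_{n,j}=-n(n+1)Y_{n,j}$. For the step $N\to N+1$ (with $0\le N\le n-1$, so $\alpha_N\neq0$) I substitute the natural relation ${}_{N+1}Y_{n,j}=\alpha_N^{-1}\eth_N\,{}_NY_{n,j}$ into the first intertwining identity; the eigenvalue increases by $2N+1$, i.e.\ from $N^2-n(n+1)$ to $(N+1)^2-n(n+1)$, and the downward step uses $\overline\eth_N$ symmetrically. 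Once the eigenvalues are known, the reverse relations follow: for $-n\le N\le-1$ I apply $\eth_N$ to the natural identity $\overline\eth_{N+1}\,{}_{N+1}Y_{n,j}=-\alpha_N\,{}_NY_{n,j}$ and evaluate the left-hand side by $\eth_N\overline\eth_{N+1}=\mathcal D_{N+1}-(N+1)$ as $-\alpha_N^2\,{}_{N+1}Y_{n,j}$, which gives $\eth_N\,{}_NY_{n,j}=\alpha_N\,{}_{N+1}Y_{n,j}$ after dividing by $-\alpha_N\neq0$; the range $1\le N\le n$ of (\ref{eqeth2}) is entirely analogous.

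The ends of the ladder are where I expect the real difficulty. At $N=n$ the identity $\overline\eth_{n+1}\eth_n=\mathcal D_n+n$ and the eigenvalue $\mathcal D_n\,{}_nY_{n,j}=-n\,{}_nY_{n,j}$ give $\overline\eth_{n+1}\eth_n\,{}_nY_{n,j}=0$, and I would deduce $\eth_n\,{}_nY_{n,j}=0$ from the anti-adjointness $\langle\eth_NF,H\rangle_{\mathrm L^2(\Omega)}=-\langle F,\overline\eth_{N+1}H\rangle_{\mathrm L^2(\Omega)}$ (integration by parts in $t$ and $\varphi$), since then $\Vert\eth_n\,{}_nY_{n,j}\Vert_2^2=-\langle{}_nY_{n,j},\overline\eth_{n+1}\eth_n\,{}_nY_{n,j}\rangle_{\mathrm L^2(\Omega)}=0$; the bottom end $\overline\eth_{-n}\,{}_{-n}Y_{n,j}=0$ is symmetric. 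The lemma at $|N|=n$ then follows because $\alpha_n=\alpha_{-(n+1)}=0$ and ${}_{\pm(n+1)}Y_{n,j}=0$, and for $|N|>n$ both sides vanish. The crux is justifying that the boundary terms in this integration by parts vanish: the operators carry the singular factor $(1-t^2)^{-1/2}$, so I must first show that the spin-weighted harmonics and their $\eth_N$-images stay bounded at the poles. This holds in explicit low-degree checks (indeed $\eth_n\,{}_nY_{n,j}$ cancels to $0$ termwise), and can alternatively be forced directly: the first-order equation $\overline\eth_{n+1}g=0$ satisfied by $g=\eth_n\,{}_nY_{n,j}$ has only the non-integrable solutions proportional to $(1-t)^{-(n+1+j)/2}(1+t)^{-(n+1-j)/2}$, so boundedness already yields $g\equiv0$.
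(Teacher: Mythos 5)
Your proof is correct in substance, but it is genuinely different from what the paper does: the paper gives no internal proof of Lemma~\ref{defspin_allN} at all and defers to the cited literature (Newman--Penrose, Goldberg et al., Dray, etc.), where these ladder relations are usually obtained either from the Wigner $D$-function representation of Theorem~\ref{wigner} (with $\eth_N$, $\overline\eth_N$ acting as raising/lowering operators in the third Euler angle) or by direct differentiation of closed-form expressions such as Lemma~\ref{spinino}. Your route is instead a self-contained factorization argument, and I have checked its core: the identities $\overline\eth_{N+1}\eth_N=\mathcal D_N+N$ and $\eth_{N-1}\overline\eth_N=\mathcal D_N-N$ do hold (note your $\mathcal D_N$ equals $\Delta^{\ast,N}_\xi+N^2$ in the paper's notation of Theorem~\ref{dglfol}, so your eigenvalue $N^2-n(n+1)$ is consistent with Theorem~\ref{dglfol}); the intertwining relations, the outward induction on eigenvalues from $N=0$, the factorial bookkeeping from Definition~\ref{newdef} giving $\alpha_N=\sqrt{(n-N)(n+N+1)}$ on the ``native'' half of each relation, and the reversal trick of applying $\eth_N$ to $\overline\eth_{N+1}\,{}_{N+1}Y_{n,j}=-\alpha_N\,{}_NY_{n,j}$ and dividing by $-\alpha_N\neq 0$ are all sound on the stated ranges, and the degrees $|N|=n$, $|N|=n+1$ are handled consistently since the coefficients $\alpha_n$ and $\alpha_{-(n+1)}$ vanish. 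What your approach buys is a proof from the paper's own definitions, with no appeal to representation theory or external closed forms; what the cited proofs get for free is the behavior at the ladder ends, which is your only hedged step. That hedge is avoidable, and your ``alternative'' ODE argument is, as stated, mildly circular (it presupposes exactly the boundedness you flag as the crux): by the paper's Lemma~\ref{spinino}, for $N=n$ the $k$-sum collapses to $k=0$, so the top rung is a single monomial, ${}_nY_{n,j}=c_{n,j}\,\bigl(o^1_\xi\bigr)^{n-j}\bigl(o^2_\xi\bigr)^{n+j}=\tilde c_{n,j}\,e^{ij\varphi}(1+t)^{(n-j)/2}(1-t)^{(n+j)/2}$, and with $a=(n-j)/2$, $b=(n+j)/2$ a two-line computation gives $\eth_n\,{}_nY_{n,j}=\bigl((a-b)-(a+b)t+nt+j\bigr)\left(1-t^2\right)^{-1/2}\,{}_nY_{n,j}=0$ identically for \emph{all} $n$ and $j$ (and symmetrically $\overline\eth_{-n}\,{}_{-n}Y_{n,j}=0$); so what you present as a ``low-degree check'' is in fact a complete general argument, and neither the anti-adjointness integration by parts nor the boundedness discussion is needed.
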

\noindent See \cite{dray, goldberg, lewis, newmanpenrose, castillo, wiaux2, wiaux} for \ a \ proof. 

\begin{theorem}\label{wigner}
The spin-weighted spherical harmonics also satisfy \cite{lewis, wiaux2, wiaux}
\begin{align*}
{}_{N} Y_{n,j}(\xi) &= (-1)^N\sqrt{\frac{2n+1}{4\pi}} \ e^{ij\varphi} d_{j,-N}^n(\vartheta)\\
&= (-1)^N\sqrt{\frac{2n+1}{4\pi}} \ \overline{D_{j,-N}^n(\varphi, \vartheta,0)},
\end{align*}
where $\xi=\xi(t,\varphi)\in\Omega_0$, $t=\cos \vartheta$, $n\in\mathbb N_0$, $N\in\mathbb Z$, $n\geq\vert N\vert$, $j=-n,\dots,n$, and $D_{j,N}^n$ is the Wigner $D$-function. 
\end{theorem}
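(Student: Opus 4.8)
The second identity is essentially definitional, so I would dispose of it first. Using the standard factorization $D^n_{m',m}(\alpha,\beta,\gamma) = e^{-im'\alpha}\,d^n_{m',m}(\beta)\,e^{-im\gamma}$ of the Wigner $D$-function together with the fact that the small-$d$ functions $d^n_{m',m}$ are real-valued, I set $m'=j$, $m=-N$, $\alpha=\varphi$, $\beta=\vartheta$, $\gamma=0$ and take complex conjugates to obtain $\overline{D^n_{j,-N}(\varphi,\vartheta,0)} = e^{ij\varphi}\,d^n_{j,-N}(\vartheta)$. It therefore suffices to prove the first equality, and the plan is to do this by induction on $N$, treating $N\geq 0$ through $\eth$ and $N\leq 0$ through $\overline{\eth}$.

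For the base case $N=0$, Definition~\ref{newdef} gives ${}_0Y_{n,j} = \eth^0_0\,Y_{n,j} = Y_{n,j}$, so the claim collapses to the classical identity $X_{n,j}(t) = \sqrt{\tfrac{2n+1}{4\pi}}\,d^n_{j,0}(\vartheta)$ (with $t=\cos\vartheta$) relating the fully normalized associated Legendre functions of Definition~\ref{spherharm} to the Wigner small-$d$ functions. I expect the only subtlety here to be reconciling the Condon--Shortley phase convention hidden in $P_{n,j}$ against the phase convention used for $d^n_{j,0}$; once the conventions are aligned, the base case is immediate.

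For the induction step, say $N\geq 0$ and assume the formula for spin weight $N$ with $N<n$. I would apply $\eth_N$ to both sides and invoke the ladder relation \eqref{eqeth1} of Lemma~\ref{defspin_allN}, giving $\eth_N\,{}_NY_{n,j} = \sqrt{n(n+1)-N(N+1)}\,{}_{N+1}Y_{n,j}$. On the other side, rewriting the operator of Definition~\ref{defieth} in the variable $\vartheta$ (via $t=\cos\vartheta$, $\sqrt{1-t^2}=\sin\vartheta$, and $\partial_t = -\tfrac{1}{\sin\vartheta}\partial_\vartheta$) shows that on a function of the form $e^{ij\varphi}f(\vartheta)$,
\begin{equation*}
\eth_N\bigl(e^{ij\varphi}f(\vartheta)\bigr) = e^{ij\varphi}\Bigl(-\partial_\vartheta + N\cot\vartheta + \tfrac{j}{\sin\vartheta}\Bigr)f(\vartheta).
\end{equation*}
Feeding in $f = d^n_{j,-N}$ from the hypothesis, the step reduces to the first-order recurrence $\bigl(-\partial_\vartheta + N\cot\vartheta + \tfrac{j}{\sin\vartheta}\bigr)d^n_{j,-N}(\vartheta) = -\sqrt{n(n+1)-N(N+1)}\,d^n_{j,-(N+1)}(\vartheta)$ for the Wigner small-$d$ functions. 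The minus sign produced here is precisely what converts the factor $(-1)^N$ of the hypothesis into the factor $(-1)^{N+1}$ demanded for spin weight $N+1$, once the common nonzero factor $\sqrt{n(n+1)-N(N+1)}$ is cancelled. The case $N\leq 0$ is handled mutatis mutandis with $\overline{\eth}_N$ and \eqref{eqeth2}.

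The main obstacle is thus establishing this first-order differential recurrence for $d^n_{j,-N}$ and matching its sign and normalization exactly to the $\vartheta$-form of $\eth_N$ (respectively $\overline{\eth}_N$). I would obtain it either by differentiating a closed-form Jacobi-polynomial representation of the small-$d$ functions, or by citing the standard ladder relations for the Wigner matrices; the delicate part throughout is keeping the phase and index conventions consistent so that the $(-1)^N$ bookkeeping in the statement comes out correctly. Everything else is routine.
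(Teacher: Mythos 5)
Your proposal is correct, but it is a genuinely different route from the paper's: the paper offers no proof at all for Theorem~\ref{wigner}, referring instead to \cite{edmonds, wigner}, where the identity is obtained from representation theory --- the Wigner $D$-functions arise as matrix elements of irreducible representations of the rotation group, and the spin-weighted harmonics are identified with these matrix elements (equivalently, one can match the explicit sum in Lemma~\ref{spinino} against Wigner's closed-form sum for $d^n_{j,-N}$ term by term). Your induction on the spin weight is self-contained modulo one standard input, and I checked that it closes: the base case holds with the paper's conventions since $d^n_{j,0}(\vartheta)=(-1)^j\sqrt{(n-j)!/(n+j)!}\,P_{n,j}(\cos\vartheta)$, so the Condon--Shortley factor $(-1)^j$ in Definition~\ref{spherharm} is exactly absorbed; your $\vartheta$-form of the operator is right, $\eth_N\bigl(e^{ij\varphi}f(\vartheta)\bigr)=e^{ij\varphi}\bigl(-\partial_\vartheta+\frac{j+N\cos\vartheta}{\sin\vartheta}\bigr)f(\vartheta)$; and the first-order recurrence you need is the standard Varshalovich-type ladder relation $\bigl(\partial_\vartheta-\frac{j+N\cos\vartheta}{\sin\vartheta}\bigr)d^n_{j,-N}=\sqrt{(n-N)(n+N+1)}\,d^n_{j,-N-1}$, whose right-hand coefficient equals $\sqrt{n(n+1)-N(N+1)}$, so the overall minus sign indeed advances $(-1)^N$ to $(-1)^{N+1}$ as you claim (I verified the sign convention on $d^{1/2}_{1/2,-1/2}=-\sin(\vartheta/2)$). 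A point worth making explicit, since you lean on Lemma~\ref{defspin_allN}: there is no circularity, because for $N\geq 0$ the raising relation \eqref{eqeth1} is immediate from Definition~\ref{newdef} (it is just the ratio of the two normalization factors $\sqrt{(n-N)!/(n+N)!}$), and symmetrically \eqref{eqeth2} for $N\leq 0$; the genuinely nontrivial mixed ladder relations are never needed in your argument. What the paper's citation buys is brevity and the structural insight that both objects are rotation-group matrix elements; what your route buys is a proof entirely within the paper's own operator framework, with the only external fact being a classical one-step recurrence for the small-$d$ functions --- the convention bookkeeping you flag is real but, as checked above, works out.
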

\noindent See \cite{edmonds, wigner} for a proof. 
\newline
\newline
\noindent Furthermore, the spin-weighted spherical harmonics can also be formulated as functions of spin weight. First, we define a function of spin weight $N$.

\begin{defi}
The coefficients $d_{i_1i_2 \dots i_{2n}} \in \mathbb R$ for $n\in\mathbb N_0$ are called totally symmetric, if they are equal for every permutation of the index $(i_1,i_2, \dots,i_{2n})\in\mathbb N_0^{2n}$.
\end{defi}

\begin{defi}\label{entwspin}
A function ${}_NF_n \in \mathrm L^2(\Omega)$ is called a function of spin weight $N\in \mathbb Z$ and degree $n\in\mathbb N_0$, if it can be written as \cite{castillo}
\begin{equation*}{}_NF_n = \sum_{i_1,\dots,i_{2n}=1}^2 d_{i_1i_2\dots i_{2n}} \underbrace{o^{i_1}o^{i_2}\dots o^{i_{n+N}}}_{n+N} \underbrace{\hat o^{i_{n+N+1}}\dots \hat o^{i_{2n}}}_{n-N},\end{equation*}
where $\vert N\vert \leq n$, the coefficients $d_{i_1i_2\dots i_{2n}}\in\mathbb R$ are totally symmetric, and for $\xi=\xi(t,\varphi)\in\Omega$, we define
\begin{align*}
o^1(\xi)&:=o^1_\xi:=e^{-i\frac{\varphi}{2}} \sqrt{\frac{1+t}{2}}, & o^2(\xi)&:=o^2_\xi:=e^{i\frac{\varphi}{2}} \sqrt{\frac{1-t}{2}},\\
\hat o^1(\xi)&:=\hat o^1_\xi:=-e^{-i\frac{\varphi}{2}} \sqrt{\frac{1-t}{2}}, &\hat o^2(\xi)&:=\hat o^2_\xi:=e^{i\frac{\varphi}{2}} \sqrt{\frac{1+t}{2}}.
\end{align*}
\end{defi}

\begin{lemma}\label{spinino}
The spin-weighted spherical harmonics can be represented as functions of spin weight $N\in\mathbb Z$ for $\xi=\xi(t,\varphi)\in\Omega$ by
\begin{align*}
{}_NY_{n,j}(\xi) &= (-1)^j \sqrt{\frac{2n+1}{4\pi}} \sqrt{(n-j)!(n+j)!(n-N)!(n+N)!}\\
&\quad \times \sum_{k=\max\lbrace 0,j-N\rbrace}^{\min\lbrace n+j,n-N\rbrace} \frac{\left(o^1_\xi\right)^{k+N-j}\left(o^2_\xi\right)^{n-k+j}\left(\hat o^1_\xi\right)^{n-k-N}\left(\hat o^2_\xi\right)^k}{k!(n+j-k)!(n-N-k)!(N-j+k)!},
\end{align*}
where $n\in\mathbb N_0$, $n\geq\vert N\vert$, and $j=-n,\dots,n$.
\end{lemma}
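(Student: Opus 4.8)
The plan is to start from the Wigner-$D$ representation of Theorem~\ref{wigner}, namely ${}_NY_{n,j}(\xi) = (-1)^N\sqrt{(2n+1)/(4\pi)}\,e^{ij\varphi}\,d_{j,-N}^n(\vartheta)$ with $t=\cos\vartheta$, and to insert the classical closed-form expression for the Wigner small-$d$ function, a finite alternating sum of products of powers of $\cos(\vartheta/2)$ and $\sin(\vartheta/2)$ carrying the four factorials $(n-N-s)!,\,s!,\,(j+N+s)!,\,(n-j-s)!$ in the denominator. The degree-$n$, order-pair $(j,-N)$ specialization fixes the summation range $s\in\{\max(0,-N-j),\dots,\min(n-N,n-j)\}$ and the square-root prefactor $\sqrt{(n-j)!(n+j)!(n-N)!(n+N)!}$, which already agrees with the prefactor claimed in the lemma.

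First I would translate everything into the spin-weighted basis $o^i_\xi,\hat o^i_\xi$ of Definition~\ref{entwspin}. Since $\vartheta\in[0,\pi]$ we have the positive half-angle identities $\cos(\vartheta/2)=\sqrt{(1+t)/2}$ and $\sin(\vartheta/2)=\sqrt{(1-t)/2}$, so the four basis functions are exactly $\cos(\vartheta/2)$ and $\sin(\vartheta/2)$ dressed with the half-phases $e^{\mp i\varphi/2}$ and, for $\hat o^1_\xi$, an extra minus sign. The next step is the bookkeeping identity: the monomial $(o^1_\xi)^{k+N-j}(o^2_\xi)^{n-k+j}(\hat o^1_\xi)^{n-k-N}(\hat o^2_\xi)^{k}$ collapses, after collecting phases, to $e^{ij\varphi}(-1)^{n-k-N}\cos^{2k+N-j}(\vartheta/2)\sin^{2n-2k+j-N}(\vartheta/2)$. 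Checking that the $\varphi$-phases cancel to leave precisely $e^{ij\varphi}$ and that the cosine and sine powers remain nonnegative on the admissible index range is a short but essential verification.

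The heart of the matching is the re-indexing $s=n-N-k$. Under this substitution I would verify term by term that the four Wigner denominator factorials map onto $k!,\,(n+j-k)!,\,(n-N-k)!,\,(N-j+k)!$; that the exponents $2n-N-j-2s$ and $j+N+2s$ become $2k+N-j$ and $2n-2k+j-N$; that the summation bounds transform into $k\in\{\max(0,j-N),\dots,\min(n+j,n-N)\}$; and that the combined sign $(-1)^N(-1)^{j+N+s}$ equals the sign $(-1)^j(-1)^{n-k-N}$ produced by the monomial above (the two differ only by $(-1)^{2N}=1$). Each of these is a routine algebraic identity once the substitution is made.

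I expect the main obstacle to be reconciling sign and normalization conventions: the closed form for $d_{j,-N}^n$ that I insert must be the one consistent with the convention underlying Theorem~\ref{wigner} as taken from \cite{edmonds, wigner}, and the stray sign carried by $\hat o^1_\xi$ must combine with the alternating sign of the Wigner sum to reproduce exactly $(-1)^j$ rather than $-(-1)^j$. As a cross-check, and as an alternative route avoiding any appeal to the explicit Wigner formula, I would confirm the $N=0$ case reduces to Definition~\ref{spherharm} and then verify that applying $\eth_N$ together with the ladder relation \eqref{eqeth1} of Lemma~\ref{defspin_allN} is consistent with incrementing $N$ in the claimed sum, giving an independent inductive proof should the convention matching prove delicate.
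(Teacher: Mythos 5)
Your proposal is correct, and it follows the canonical derivation that the paper implicitly relies on: the paper states Lemma~\ref{spinino} without an in-text proof (deferring, as with the neighboring results, to \cite{handbook, diss} and to \cite{castillo} for Definition~\ref{entwspin}), and that derivation is precisely your route of inserting the closed-form Wigner small-$d$ expression into Theorem~\ref{wigner}. Your bookkeeping checks out in detail: under $s=n-N-k$ the phases collapse to $e^{ij\varphi}$, the denominator factorials become $k!\,(n+j-k)!\,(n-N-k)!\,(N-j+k)!$, the bounds become $\max\lbrace 0,j-N\rbrace\leq k\leq\min\lbrace n+j,n-N\rbrace$, and the signs satisfy $(-1)^N(-1)^{j+N+s}=(-1)^{j+n-N-k}=(-1)^j(-1)^{n-k-N}$, so the argument is complete.
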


\noindent Similar to the classical spherical harmonics, the spin-weighted spherical harmonics also satisfy recursion relations, a Christoffel-Darboux formula, and an addition theorem. These have been proven in \cite{handbook, diss} for the first time. In the following, we report the results and refer to \cite{handbook} and \cite{diss} for the proofs. 

\begin{theorem}\label{recspin}
The spin-weighted spherical harmonics satisfy the following recursion relations for $\xi=\xi(t,\varphi)\in\Omega_0$:
\begin{align}
\left(t^2-1\right) \partial_t \ {}_N Y_{n,j}(\xi) &= \left(nt + \frac{Nj}{n} \right) {}_N Y_{n,j}(\xi) -(2n+1) \alpha^N_{n,j} \ {}_N Y_{n-1,j}(\xi), \label{rec1}\\
&= -\left( (n+1)t + \frac{Nj}{n+1} \right) {}_NY_{n,j}(\xi) +(2n+1) \alpha^N_{n+1,j} \ {}_N Y_{n+1,j}(\xi), \label{rec2}\\
\left( t + \frac{Nj}{n(n+1)} \right) {}_N Y_{n,j}(\xi) &= \alpha^N_{n,j} \ {}_N Y_{n-1,j}(\xi) + \alpha^N_{n+1,j} \ {}_N Y_{n+1,j}(\xi), \label{rec3} \end{align}
where
\begin{equation*}\alpha^N_{n,j}:= \frac{\sqrt{(n-N)(n+N)}}{n} \ c_{n,j}=\frac{\sqrt{(n-N)(n+N)}}{n} \sqrt{\frac{(n-j)(n+j)}{(2n-1)(2n+1)}},\end{equation*}
$N\in\mathbb Z$, $n\in\mathbb N_0$, $n\geq\vert N+1\vert$, and $j=-n,\dots,n$. Furthermore, we denote ${}_NY_{n,j}:=0$ for $n<\vert j\vert$.
\end{theorem}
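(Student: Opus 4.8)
The plan is to treat \eqref{rec1} and \eqref{rec2} as the substantive content and to obtain \eqref{rec3} from them for free. Indeed, both \eqref{rec1} and \eqref{rec2} express the same quantity $\left(t^2-1\right)\partial_t\,{}_NY_{n,j}$, so equating their right-hand sides and collecting the ${}_NY_{n,j}$ terms on one side gives $(2n+1)\left(t+\tfrac{Nj}{n(n+1)}\right){}_NY_{n,j}=(2n+1)\bigl(\alpha^N_{n,j}\,{}_NY_{n-1,j}+\alpha^N_{n+1,j}\,{}_NY_{n+1,j}\bigr)$, which is precisely \eqref{rec3} after dividing by $2n+1$. Hence it suffices to establish the two derivative recursions.

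For these I would pass to the hypergeometric picture via Theorem~\ref{wigner}. Writing $t=\cos\vartheta$, that theorem gives ${}_NY_{n,j}=(-1)^N\sqrt{\tfrac{2n+1}{4\pi}}\,e^{ij\varphi}\,d^n_{j,-N}(\vartheta)$, and the Wigner function $d^n_{j,-N}$ factors as an explicit weight $(1-t)^{|j+N|/2}(1+t)^{|j-N|/2}$ times a Jacobi polynomial $P^{(|j+N|,\,|j-N|)}_{\,n-\max(|j|,|N|)}(t)$ of fixed parameters. Since the operators appearing in \eqref{rec1}, \eqref{rec2} act only in the variable $t$ and leave $j$ untouched, the factor $e^{ij\varphi}$ and the constant $(-1)^N$ cancel throughout, and the claim reduces to a pair of identities in $t$ for these weighted Jacobi polynomials.

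Those identities are the classical derivative relations for Jacobi polynomials at fixed parameters, namely the two relations expressing $(1-t^2)\tfrac{d}{dt}P^{(a,b)}_k$ as a linear combination of $P^{(a,b)}_k$ and $P^{(a,b)}_{k-1}$ (respectively $P^{(a,b)}_{k+1}$), together with the three-term recurrence. First I would differentiate the weight-times-polynomial product, then multiply by $\left(t^2-1\right)$, and finally feed in the Jacobi relations; reintroducing the degree-dependent normalization $\sqrt{(2n+1)/(4\pi)}$ and the shifted weights at degrees $n\pm1$ turns the bare Jacobi coefficients into the prefactors $nt+\tfrac{Nj}{n}$, $-(n+1)t-\tfrac{Nj}{n+1}$ and into $(2n+1)\alpha^N_{n,j}$, $(2n+1)\alpha^N_{n+1,j}$. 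Subtracting the two resulting relations then reproduces \eqref{rec3} as a consistency check.

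The main obstacle is the normalization bookkeeping in the last step: one must verify that the products of Jacobi-recurrence coefficients with the ratios $\sqrt{(2n\pm1)/(2n+1)}$ and with the boundary weights $(1\mp t)$ collapse to exactly $\alpha^N_{n,j}=\tfrac{\sqrt{(n-N)(n+N)}}{n}\sqrt{\tfrac{(n-j)(n+j)}{(2n-1)(2n+1)}}$ and to the stated $t$- and $Nj$-dependent prefactors. This is elementary but delicate square-root and factorial arithmetic, and getting the cross-terms to combine correctly is where the real effort lies. If one prefers a self-contained derivation that avoids citing the Jacobi/Wigner machinery, two alternatives are available: induct on the spin weight $N$, transporting the classical $N=0$ Legendre recursions upward and downward with the ladder operators $\eth_N$ and $\overline{\eth}_N$ via Lemma~\ref{defspin_allN}, the catch being that the commutators of $\eth_N$ with $t\,\cdot$ and with $\left(t^2-1\right)\partial_t$ generate extra $\sqrt{1-t^2}$-weighted terms that must be reabsorbed using \eqref{eqeth1} and \eqref{eqeth2}; or verify \eqref{rec1}--\eqref{rec3} directly from the explicit finite sum in Lemma~\ref{spinino}, where the obstacle instead becomes a finite index-shifting identity among the summation coefficients.
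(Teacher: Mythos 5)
Your plan is correct, but note that there is no in-paper proof to match it against: the paper states Theorem \ref{recspin} and explicitly defers the proofs to \cite{handbook, diss}, so I assess your argument on its merits. Your reduction of \eqref{rec3} is airtight: summing the coefficients of ${}_N Y_{n,j}$ from \eqref{rec1} and \eqref{rec2} gives $nt+\frac{Nj}{n}+(n+1)t+\frac{Nj}{n+1}=(2n+1)\left(t+\frac{Nj}{n(n+1)}\right)$, and the factor $2n+1$ cancels against the coefficients of ${}_NY_{n\mp 1,j}$. The ``delicate bookkeeping'' you defer in the Jacobi route does collapse exactly as you predict, and it is worth recording why: with $a=\vert j+N\vert$, $b=\vert j-N\vert$, $k=n-\max(\vert j\vert,\vert N\vert)$ (your factorization of $d^n_{j,-N}$ is stated correctly, since $\vert j-(-N)\vert=\vert j+N\vert$, and crucially $a,b$ do not depend on $n$), one has $2k+a+b=2n$ and $a^2-b^2=4jN$, so the weight-derivative contribution $\frac{a-b}{2}+\frac{a+b}{2}t$ combined with the classical relation $(1-t^2)\frac{\mathrm d}{\mathrm dt}P^{(a,b)}_k=\frac{k[(a-b)-(2k+a+b)t]}{2k+a+b}P^{(a,b)}_k+\frac{2(k+a)(k+b)}{2k+a+b}P^{(a,b)}_{k-1}$ produces the coefficient $nt+\frac{Nj}{n}$ exactly; moreover $k(k+a+b)=n^2-\max(\vert j\vert,\vert N\vert)^2$ and $(k+a)(k+b)=n^2-\min(\vert j\vert,\vert N\vert)^2$ multiply to $(n^2-j^2)(n^2-N^2)$, which together with the degree-dependent normalization ratio $\sqrt{(2n+1)/(2n-1)}\,\sqrt{k(k+a+b)/((k+a)(k+b))}$ yields $(2n+1)\alpha^N_{n,j}$ on the nose. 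Two details you should make explicit: the $n$-independent sign and normalization constants of the Wigner--Jacobi formula cancel only because every term in each identity carries the same pair $(j,N)$; and at the bottom degree $n=\max(\vert j\vert,\vert N\vert)$ the $P^{(a,b)}_{k-1}$ term is vacuous, consistent with the conventions ${}_NY_{n,j}:=0$ for $n<\vert j\vert$ (stated in the theorem) and for $n<\vert N\vert$ (Definition \ref{newdef}). You can also halve the work: having proved \eqref{rec1}, you may prove \eqref{rec3} directly (it is precisely the known three-term recurrence of the Wigner $d$-functions) and obtain \eqref{rec2} by eliminating ${}_NY_{n-1,j}$, rather than running the Jacobi computation twice. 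Finally, of your listed alternatives, the direct verification from Lemma \ref{spinino} is the one most consonant with the paper's own arrangement --- that lemma is placed immediately before the theorem precisely as the representation in which the cited sources develop these properties --- whereas your Jacobi/Wigner route buys short, classical one-variable identities at the cost of importing the $d^n_{j,-N}$ factorization from outside the paper's toolkit.
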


\begin{theorem}[Christoffel-Darboux Formula]\label{CDF}
For all $N \in \mathbb Z$, we obtain the Christoffel-Darboux formula for the spin-weighted spherical harmonics
\begin{equation*}\left(t_1-t_2\right) \ \sum_{n=n_j}^{L-1} \overline{{}_NY_{n,j}(\xi)} \ {}_NY_{n,j}(\eta) = \alpha^N_{L,j} \left( \overline{{}_NY_{L,j}(\xi)}\ {}_NY_{L-1,j}(\eta) - \overline{{}_NY_{L-1,j}(\xi)} \ {}_NY_{L,j}(\eta) \right),\end{equation*}
where
\begin{equation*}n_j:=\max\lbrace \vert N \vert, \vert j \vert \rbrace,\end{equation*}
$\xi=\xi(t_1,\varphi_1)$, $\eta=\eta(t_2,\varphi_2)$ are the polar coordinate representations of $\xi,\eta\in\Omega_0$, $L>n_j$ is the bandlimit, and $j=-L,\dots,L$.
\end{theorem}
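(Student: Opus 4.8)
The plan is to mimic the classical derivation of the Christoffel--Darboux formula for a family of orthogonal polynomials obeying a symmetric three-term recurrence, the role of which is played here by the recurrence \eqref{rec3}. Two features distinguish the present situation from the textbook one: the summands carry a complex conjugate and are evaluated at two different points $\xi,\eta$, and the ``diagonal'' coefficient $t+\frac{Nj}{n(n+1)}$ in \eqref{rec3} depends on $n$. The first is harmless because the coefficients $t_1,t_2,\frac{Nj}{n(n+1)}$ and $\alpha^N_{n,j}$ are all real, so complex conjugation turns the recurrence at $\xi$ into a valid recurrence for $\overline{{}_NY_{n,j}(\xi)}$; the second is harmless because the $n$-dependent diagonal term is common to both points and cancels upon subtraction.

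First I would fix $N$ and $j$ and write \eqref{rec3} at $\xi=\xi(t_1,\varphi_1)$, conjugate it, and multiply by ${}_NY_{n,j}(\eta)$; then write \eqref{rec3} at $\eta=\eta(t_2,\varphi_2)$ and multiply by $\overline{{}_NY_{n,j}(\xi)}$. Subtracting the two, the diagonal terms $\frac{Nj}{n(n+1)}$ cancel and one is left with
\begin{equation*}
(t_1-t_2)\,\overline{{}_NY_{n,j}(\xi)}\,{}_NY_{n,j}(\eta)=W_n-W_{n-1},
\end{equation*}
where
\begin{equation*}
W_n:=\alpha^N_{n+1,j}\left(\overline{{}_NY_{n+1,j}(\xi)}\,{}_NY_{n,j}(\eta)-\overline{{}_NY_{n,j}(\xi)}\,{}_NY_{n+1,j}(\eta)\right).
\end{equation*}
The point that makes the telescoping close is the symmetry of the off-diagonal coefficients in \eqref{rec3}: the factor $\alpha^N_{n+1,j}$ multiplies ${}_NY_{n+1,j}$ in the recurrence at degree $n$ and simultaneously multiplies ${}_NY_{n,j}$ in the recurrence at degree $n+1$, so the $\alpha^N_{n,j}$-term of the $n$-th identity is exactly $-W_{n-1}$ and the $\alpha^N_{n+1,j}$-term is exactly $W_n$.

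Next I would sum this identity over $n=n_j,\dots,L-1$. The right-hand side telescopes to $W_{L-1}-W_{n_j-1}$. The top term $W_{L-1}=\alpha^N_{L,j}\big(\overline{{}_NY_{L,j}(\xi)}\,{}_NY_{L-1,j}(\eta)-\overline{{}_NY_{L-1,j}(\xi)}\,{}_NY_{L,j}(\eta)\big)$ is precisely the claimed right-hand side, so it remains to show $W_{n_j-1}=0$. Since $n_j=\max\{|N|,|j|\}$, at least one of $n_j-1<|N|$ or $n_j-1<|j|$ holds, whence ${}_NY_{n_j-1,j}=0$ by the conventions in Definition~\ref{newdef} and Theorem~\ref{recspin}; both factors in $W_{n_j-1}$ then vanish. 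No division by a common phase is needed, since \eqref{rec3} already holds for the full functions ${}_NY_{n,j}$.

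I expect the only real care to lie in the bookkeeping at the lower endpoint: one must confirm that \eqref{rec3} is available for every index in the summation range $n_j\le n\le L-1$ and, at the minimal degree $n=n_j$, that the term $\alpha^N_{n_j,j}\,{}_NY_{n_j-1,j}$ is annihilated by the vanishing convention, so that the $n=n_j$ identity reduces consistently to $(t_1-t_2)\,\overline{{}_NY_{n_j,j}(\xi)}\,{}_NY_{n_j,j}(\eta)=W_{n_j}$. Beyond this, the argument is a direct transcription of the classical orthogonal-polynomial computation; the reality of $\alpha^N_{n,j}$ (guaranteed by $n\ge n_j$) is what lets the conjugation pass through cleanly, and no genuinely new difficulty arises.
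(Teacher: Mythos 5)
Your proposal is correct and follows exactly the intended argument: the paper prints no proof of Theorem~\ref{CDF} but defers to \cite{handbook, diss}, where the formula is obtained by the same classical Christoffel--Darboux telescoping of the conjugated three-term recurrence \eqref{rec3}, with the diagonal term $\frac{Nj}{n(n+1)}$ cancelling in the subtraction and the lower boundary term vanishing since $\alpha^N_{n_j,j}=0$ and ${}_NY_{n_j-1,j}=0$ by the conventions of Definition~\ref{newdef} and Theorem~\ref{recspin}. Your explicit attention to the lower endpoint is well placed, since the range $n\geq\vert N+1\vert$ stated in Theorem~\ref{recspin} does not literally cover $n=n_j=\vert N\vert$ for $N\geq 0$, but there the recurrence extends consistently because the offending terms are annihilated, exactly as you note.
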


\begin{theorem}\label{orth}
The spin-weighted spherical harmonics are orthonormal with respect to the $\mathrm L^2(\Omega)$-inner product,
\begin{equation*}\int_\Omega {}_NY_{n,j}(\xi) \ \overline{{}_NY_{n',j'}(\xi)} \ \mathrm d\omega(\xi) = \delta_{n,n'} \delta_{j,j'}.\end{equation*}
\end{theorem}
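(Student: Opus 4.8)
The plan is to prove the identity by induction on the spin weight, reducing the case of spin weight $N+1$ to that of spin weight $N$ via the raising and lowering relations of Lemma~\ref{defspin_allN}, with the classical orthonormality of the scalar harmonics $Y_{n,j}={}_0Y_{n,j}$ serving as the base case $N=0$. The engine driving the induction is an adjointness (integration-by-parts) relation between the operators $\eth_N$ and $\overline\eth_{N+1}$ on $\mathrm L^2(\Omega)$.

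First I would establish this adjoint relation. Writing the $\mathrm L^2(\Omega)$ inner product in the coordinates $(t,\varphi)$, where the surface element is $\mathrm d\omega=\mathrm dt\,\mathrm d\varphi$ with $t\in[-1,1]$ and $\varphi\in[0,2\pi)$, I would integrate $\int_\Omega(\eth_N F)\overline{G}\,\mathrm d\omega$ by parts: in $t$ for the $\sqrt{1-t^2}\,\partial_t$ term and in $\varphi$ (periodic, hence with no boundary contribution) for the $-\mathrm i\partial_\varphi$ term. Using $\partial_t\sqrt{1-t^2}=-t/\sqrt{1-t^2}$, the differentiated and algebraic terms recombine, after comparison with Definition~\ref{defieth}, into exactly $-\overline{\overline\eth_{N+1}G}$. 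This yields $\langle \eth_N F,G\rangle_{\mathrm L^2(\Omega)}=-\langle F,\overline\eth_{N+1}G\rangle_{\mathrm L^2(\Omega)}$, i.e.\ the formal adjoint of $\eth_N$ is $-\overline\eth_{N+1}$; an entirely analogous computation gives $\langle\overline\eth_N F,G\rangle_{\mathrm L^2(\Omega)}=-\langle F,\eth_{N-1}G\rangle_{\mathrm L^2(\Omega)}$.

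For the inductive step (say $N\ge 0$, raising), I would use \eqref{eqeth1} to write ${}_{N+1}Y_{n,j}=\bigl(n(n+1)-N(N+1)\bigr)^{-1/2}\,\eth_N\,{}_NY_{n,j}$ for $n\ge N+1$. Applying the adjoint relation and then \eqref{eqeth1} followed by \eqref{eqeth2} to the second factor gives $\overline\eth_{N+1}\eth_N\,{}_NY_{n',j'}=-\bigl(n'(n'+1)-N(N+1)\bigr)\,{}_NY_{n',j'}$, so that $\langle\eth_N\,{}_NY_{n,j},\eth_N\,{}_NY_{n',j'}\rangle_{\mathrm L^2(\Omega)}=\bigl(n'(n'+1)-N(N+1)\bigr)\,\langle{}_NY_{n,j},{}_NY_{n',j'}\rangle_{\mathrm L^2(\Omega)}$. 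By the induction hypothesis the right-hand side equals $\bigl(n'(n'+1)-N(N+1)\bigr)\,\delta_{n,n'}\delta_{j,j'}$, and dividing by the two normalizing factors (which coincide once $n=n'$) leaves exactly $\delta_{n,n'}\delta_{j,j'}$. The cases $n=N$ or $n'=N$ are vacuous for spin weight $N+1$, since there ${}_{N+1}Y_{N,j}=0$. The lowering relation \eqref{eqeth2} together with the second adjoint identity treats $N\le 0$ symmetrically, so the induction closes.

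The step I expect to be most delicate is the vanishing of the boundary terms at the poles $t=\pm1$ in the integration by parts: the operators $\eth_N,\overline\eth_N$ are only defined on $\Omega_0$ and are singular at the poles, so I must verify that $\sqrt{1-t^2}\,{}_NY_{n,j}\,\overline{{}_{N}Y_{n',j'}}\to 0$ as $t\to\pm1$. This follows from the explicit near-pole behaviour of the spin-weighted harmonics (e.g.\ from Lemma~\ref{spinino} or from the $d^n_{j,-N}$ representation of Theorem~\ref{wigner}, which vanish like powers of $\sqrt{1-t^2}$ at the poles), but it must be stated carefully to make the integration by parts legitimate. As an alternative route avoiding this issue, one could substitute the Wigner representation of Theorem~\ref{wigner} directly: the $\varphi$-integral yields $2\pi\delta_{j,j'}$, and the remaining integral is the classical orthogonality relation $\int_0^\pi d^n_{j,-N}(\vartheta)\,d^{n'}_{j,-N}(\vartheta)\,\sin\vartheta\,\mathrm d\vartheta=\tfrac{2}{2n+1}\delta_{n,n'}$ for the reduced Wigner $d$-functions, which together with the prefactor $\tfrac{2n+1}{4\pi}$ reproduces $\delta_{n,n'}\delta_{j,j'}$.
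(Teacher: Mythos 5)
Your proof is correct, but note that the paper itself does not supply a proof of Theorem~\ref{orth} at all: it defers entirely to the literature (\cite{dahlentromp, dray, goldberg, lewis, newmanpenrose, wiaux} in the text following the theorem), so any self-contained argument is ``different from the paper'' by default. Your main route --- the adjointness $\langle \eth_N F,G\rangle_{\mathrm L^2(\Omega)}=-\langle F,\overline\eth_{N+1}G\rangle_{\mathrm L^2(\Omega)}$ combined with the ladder relations \eqref{eqeth1}--\eqref{eqeth2} and induction on $|N|$ from the classical base case ${}_0Y_{n,j}=Y_{n,j}$ --- is essentially the original Newman--Penrose-style argument, and your computations check out: integrating $\sqrt{1-t^2}\,\partial_t$ by parts produces the term $\tfrac{t}{\sqrt{1-t^2}}$ that shifts the spin weight from $N$ to $N+1$ in the adjoint, the $\varphi$-integration contributes no boundary term by periodicity, and $\overline\eth_{N+1}\eth_N\,{}_NY_{n',j'}=-\bigl(n'(n'+1)-N(N+1)\bigr)\,{}_NY_{n',j'}$ makes the normalizations cancel exactly when $n=n'$. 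Your fallback route via Theorem~\ref{wigner} is the Peter--Weyl-style argument one finds in the cited references (the $\varphi$-integral gives $2\pi\delta_{j,j'}$, the reduced Wigner $d$-orthogonality gives $\tfrac{2}{2n+1}\delta_{n,n'}$, and the prefactors cancel); it is shorter but outsources the work to the known $d$-function orthogonality, whereas the ladder argument stays inside the paper's own toolkit (Definition~\ref{defieth}, Lemma~\ref{defspin_allN}). The one step you rightly flag as delicate --- vanishing of the boundary term $\bigl[\sqrt{1-t^2}\,F\,\overline{G}\bigr]_{t=-1}^{t=1}$ --- is in fact immediate from material already in the paper: by Corollary~\ref{folYinspace} the functions ${}_NY_{n,j}$ lie in $\mathrm X^k(\Omega_0)$, hence are bounded on $\overline{\Omega_0}=\Omega$, so the factor $\sqrt{1-t^2}\to 0$ kills the boundary term, and the growth condition $\partial_t^{\,j}F=\mathcal O\bigl((1-t^2)^{1/2-j}\bigr)$ in Definition~\ref{defspacecomm} guarantees that every integral appearing in the integration by parts converges; citing these two facts would close the only gap you left open.
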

\noindent See \cite{dahlentromp, dray, goldberg, hu, lewis, newmanpenrose, wiaux} for the proof of Theorem~\ref{orth}. 

\begin{theorem}[Addition Theorem for Spin-Weighted Spherical Harmonics] \label{addtheotheo}
The spin-weighted spherical harmonics satisfy the following addition theorem for $N_1,N_2\in\mathbb Z$ and for $n\in\mathbb N_0$, $n\geq\max\lbrace \vert N_1\vert, \vert N_2\vert\rbrace$,
\begin{equation*}\sum_{j=-n}^n {}_{N_1}Y_{n,j}(\xi_1) \ \overline{{}_{N_2}Y_{n,j}(\xi_2)} = (-1)^{N_1} \sqrt{\frac{2n+1}{4\pi}} \ {}_{N_2}Y_{n,-N1}(\xi) \ e^{-iN_2\gamma},\end{equation*}
where $\xi_1=\xi_1(t_1,\varphi_1)$, $\xi_2=\xi_2(t_2,\varphi_2)$, $t_i=\cos\vartheta_i$, $i=1,2$, $\xi=\xi(t,\alpha)\in\Omega$, $t=\cos \beta$, and $\alpha$, $\beta$, and $\gamma$ are the Euler angles given by
\begin{itemize}
\item for $\sin (\varphi_1-\varphi_2) \neq 0$
\begin{align*}
\cot \alpha &= \cos \vartheta_1 \cot (\varphi_1-\varphi_2) - \cot \vartheta_2 \frac{\sin \vartheta_1}{\sin(\varphi_1-\varphi_2)},\\
\cos \beta &= \cos \vartheta_1 \cos \vartheta_2 + \sin \vartheta_1 \sin \vartheta_2 \cos( \varphi_1-\varphi_2),\\
\cot \gamma &= \cos \vartheta_2 \cot (\varphi_1-\varphi_2) - \cot \vartheta_1 \frac{\sin \vartheta_2}{\sin (\varphi_1-\varphi_2)}.
\end{align*}
\item for $\sin (\varphi_1-\varphi_2) = 0$, so $\varphi_1-\varphi_2 = k\pi$, $k\in\mathbb Z$, then
\begin{equation*}\begin{rcases}
\alpha=\pi, \beta=\vartheta_1-\vartheta_2, \gamma=\pi &, \text{ if } k \text{ even}, -\vartheta_1+\vartheta_2\in \lbrack -\pi,0)\\
\alpha=0, \beta=-\vartheta_1+\vartheta_2, \gamma=0 &, \text{ if }k \text{ even}, -\vartheta_1+\vartheta_2\in \lbrack 0,\pi)\\
\alpha=\pi, \beta=\vartheta_1+\vartheta_2, \gamma=0 &, \text{ if } k \text{ odd}, \vartheta_1+\vartheta_2\in\lbrack 0,\pi)\\
\alpha=0, \beta=2\pi-(\vartheta_1+\vartheta_2), \gamma=\pi &, \text{ if } k \text{ odd}, \vartheta_1+\vartheta_2\in\lbrack \pi,2\pi)
\end{rcases}.\end{equation*}
\end{itemize}
\end{theorem}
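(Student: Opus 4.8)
The plan is to translate the entire identity into the language of Wigner $D$-functions via Theorem~\ref{wigner}, reduce the sum over $j$ to a single matrix element using the group-representation property of the $D$-matrices, and finally re-identify the result as a spin-weighted spherical harmonic; the spherical-trigonometric identification of the Euler angles will be the delicate part. First I would rewrite both factors on the left-hand side using Theorem~\ref{wigner}. Writing $R_i$ for the rotation with Euler angles $(\varphi_i,\vartheta_i,0)$, this gives
\[
{}_{N_1}Y_{n,j}(\xi_1)\,\overline{{}_{N_2}Y_{n,j}(\xi_2)} = (-1)^{N_1+N_2}\frac{2n+1}{4\pi}\,\overline{D^n_{j,-N_1}(R_1)}\,D^n_{j,-N_2}(R_2).
\]
Summing over $j$, I would use the unitarity relation $\overline{D^n_{j,-N_1}(R_1)} = D^n_{-N_1,j}(R_1^{-1})$ to rewrite the sum as $\sum_j D^n_{-N_1,j}(R_1^{-1})\,D^n_{j,-N_2}(R_2)$.

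The central step is then the group-representation (multiplication) property of the Wigner $D$-matrices, $D^n(R)\,D^n(R') = D^n(RR')$ componentwise, which collapses the $j$-sum into the single element $D^n_{-N_1,-N_2}(R_1^{-1}R_2)$. Thus the left-hand side equals $(-1)^{N_1+N_2}\frac{2n+1}{4\pi}\,D^n_{-N_1,-N_2}(R_1^{-1}R_2)$. Converting the right-hand side of the claimed identity in the same way with Theorem~\ref{wigner}, I would obtain $(-1)^{N_1+N_2}\frac{2n+1}{4\pi}\,\overline{D^n_{-N_1,-N_2}(\alpha,\beta,0)}\,e^{-iN_2\gamma}$. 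Expanding $D^n_{m,m'}(\alpha,\beta,\gamma) = e^{-im\alpha}\,d^n_{m,m'}(\beta)\,e^{-im'\gamma}$ and using that the reduced Wigner $d$-functions are real-valued, the two sides match precisely provided the composed rotation $R_1^{-1}R_2$ carries the Euler angles $(\alpha,\beta,\gamma)$ (up to the convention-dependent signs of $\alpha$ and $\gamma$ discussed below).

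It therefore remains to verify that the Euler angles of the composed rotation are exactly those recorded in the statement. This is pure spherical trigonometry: reading off the Euler angles of the product rotation yields, via the spherical law of cosines, the formula $\cos\beta = \cos\vartheta_1\cos\vartheta_2 + \sin\vartheta_1\sin\vartheta_2\cos(\varphi_1-\varphi_2)$ for the polar angle (the great-circle distance between $\xi_1$ and $\xi_2$), and the cotangent relations for $\alpha$ and $\gamma$ follow from the corresponding side-angle formulas of the associated spherical triangle. I expect this identification to be the main obstacle, for two reasons. First, one must pin down the rotation convention (active versus passive, and the ordering of the three elementary rotations) so that all the exponential signs emerge as $e^{-iN_1\alpha}$ and $e^{-iN_2\gamma}$ rather than their complex conjugates. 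Second, the generic cotangent expressions degenerate when $\sin(\varphi_1-\varphi_2)=0$, which forces the separate treatment listed in the statement: there one reads the Euler angles directly off the geometry according to the parity of $k$ and the range of $\vartheta_1\pm\vartheta_2$, rather than from the cotangent formulas. Once the convention is fixed and this case analysis is carried out, substituting the Euler angles back into the expanded $D$-matrix element completes the proof.
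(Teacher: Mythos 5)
Your proposal is correct and follows essentially the same route as the proof the paper defers to in \cite{handbook,diss}: rewriting both factors via Theorem~\ref{wigner}, using unitarity and the representation property of the Wigner $D$-matrices to collapse the $j$-sum into the single element $D^n_{-N_1,-N_2}(R_1^{-1}R_2)$, and then identifying the Euler angles of the composed rotation by spherical trigonometry, with the degenerate case $\sin(\varphi_1-\varphi_2)=0$ handled by a separate case distinction. You also correctly isolate the only delicate point --- the convention-dependent sign bookkeeping for $\alpha$ and $\gamma$ so that the exponentials come out as $e^{-iN_1\alpha}$ and $e^{-iN_2\gamma}$ --- which is precisely the bookkeeping carried out in the cited proof.
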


\noindent The proof can be found in \cite{handbook,diss}.

\begin{fol}\label{addtheo}
With $\xi_1=\xi_2=\eta\in\Omega$ and with $N_1=N_2=N$ the addition theorem reduces to
\begin{equation*} \sum_{j=-n}^n {}_NY_{n,j}(\eta) \ \overline{{}_NY_{n,j}(\eta)} = \frac{2n+1}{4\pi}. \end{equation*}
\end{fol}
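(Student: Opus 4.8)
The plan is to specialize the addition theorem of Theorem~\ref{addtheotheo} to the ``diagonal'', setting $\xi_1=\xi_2=\eta$ and $N_1=N_2=N$. The left-hand side then becomes exactly $\sum_{j=-n}^n {}_NY_{n,j}(\eta)\,\overline{{}_NY_{n,j}(\eta)}$, so all the work lies in simplifying the right-hand side, which equals $(-1)^{N}\sqrt{(2n+1)/(4\pi)}\;{}_{N}Y_{n,-N}(\xi)\,e^{-iN\gamma}$.

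First I would determine the Euler angles for coincident points. With $\xi_1=\xi_2=\eta$ one has $\vartheta_1=\vartheta_2$ and $\varphi_1=\varphi_2$, hence $\sin(\varphi_1-\varphi_2)=0$ with $\varphi_1-\varphi_2=0\cdot\pi$ (so $k=0$, even) and $-\vartheta_1+\vartheta_2=0\in[0,\pi)$. This is the second case of the piecewise angle definition, yielding $\alpha=\beta=\gamma=0$. Consequently $e^{-iN\gamma}=1$, and the point $\xi=\xi(\cos\beta,\alpha)=\xi(1,0)$ appearing on the right-hand side is the North pole with azimuth fixed to $0$.

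Next I would evaluate ${}_NY_{n,-N}$ at this point via the Wigner representation of Theorem~\ref{wigner}. Setting $j=-N$, $\varphi=0$, and $t=\cos\vartheta=\cos\beta=1$ (so $\vartheta=0$) gives ${}_{N}Y_{n,-N}(\xi)=(-1)^{N}\sqrt{(2n+1)/(4\pi)}\;e^{0}\,d^n_{-N,-N}(0)$. Since the Wigner small-$d$ function satisfies $d^n_{m,m'}(0)=\delta_{m,m'}$, we have $d^n_{-N,-N}(0)=1$, so ${}_{N}Y_{n,-N}(\xi)=(-1)^{N}\sqrt{(2n+1)/(4\pi)}$. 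Substituting into the right-hand side of the addition theorem produces $(-1)^{N}\sqrt{(2n+1)/(4\pi)}\cdot(-1)^{N}\sqrt{(2n+1)/(4\pi)}\cdot 1=(-1)^{2N}(2n+1)/(4\pi)=(2n+1)/(4\pi)$, which is the claimed identity.

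The main obstacle is the pole singularity: the spin-weighted harmonics are defined only on $\Omega_0$, yet both the corollary (stated for $\eta\in\Omega$) and the right-hand side point $\xi$ land on a pole. I would resolve this by working with the Wigner-$d$ representation, in which each ${}_NY_{n,j}$ is, at fixed azimuth, smooth in $\vartheta$, so the evaluation $d^n_{-N,-N}(0)$ is legitimate; the identity holds on the dense set $\Omega_0$ and extends to all of $\Omega$ by continuity. The only other point requiring care is confirming that one selects the correct branch of the piecewise Euler-angle definition, but once $\alpha=\beta=\gamma=0$ is established the remaining computation is immediate.
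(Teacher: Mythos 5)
Your proposal is correct and follows the same route the paper intends: Corollary~\ref{addtheo} is stated as a direct specialization of Theorem~\ref{addtheotheo} to $\xi_1=\xi_2$ and $N_1=N_2=N$, and your identification of the Euler-angle branch $\alpha=\beta=\gamma=0$, the evaluation ${}_NY_{n,-N}(\xi)=(-1)^N\sqrt{(2n+1)/(4\pi)}$ at the pole via $d^n_{-N,-N}(0)=1$ from Theorem~\ref{wigner}, and the continuity argument handling the restriction to $\Omega_0$ correctly supply the details the paper leaves implicit.
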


\begin{theorem}\label{dglfol}
 The spin-weighted spherical harmonics are eigenfunctions of the spin-weighted Beltrami operator 
 \begin{equation*}\Delta^{\ast,N}_\xi := \Delta^\ast_\xi - \frac{N^2-2itN\partial_\varphi}{1-t^2},\end{equation*}
where the classical Beltrami operator \ satisfies \ $\Delta^\ast_\xi = \partial_t
\left((1-t^2)\partial_t\right)+\frac{1}{1-t^2}
\ \partial_\varphi^2$. Hence, for all $\xi=\xi(t,\varphi)\in\Omega_0$,
all $N \in \mathbb Z$, all $n\in\mathbb N_0$, $n\geq\vert N\vert$, and
all $j=-n,\dots,n$
\begin{equation*}\Delta^{\ast,N}_\xi \ {}_N Y_{n,j}(\xi) = -n(n+1) \ {}_N Y_{n,j}(\xi).\end{equation*}
\end{theorem}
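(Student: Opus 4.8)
The plan is to factor the spin-weighted Beltrami operator $\Delta^{\ast,N}_\xi$ into a product of the spin-raising and spin-lowering operators of Definition~\ref{defieth}, and then read off the eigenvalue from the ladder relations of Lemma~\ref{defspin_allN}. Concretely, I would first establish the operator identity
\begin{equation*}\overline{\eth}_{N+1}\,\eth_N = \Delta^{\ast,N}_\xi + N(N+1)\end{equation*}
as differential operators acting on $\mathrm C^{(2)}(\Omega_0)$, and only afterwards evaluate both sides on the basis functions.

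To prove this identity, I would insert the definitions $\eth_N = \sqrt{1-t^2}\,\partial_t + (Nt - i\partial_\varphi)/\sqrt{1-t^2}$ and $\overline{\eth}_{N+1} = \sqrt{1-t^2}\,\partial_t - ((N+1)t - i\partial_\varphi)/\sqrt{1-t^2}$ and expand the composition, carefully respecting operator ordering whenever $\partial_t$ or $\partial_\varphi$ acts on the $t$-dependent coefficients. Writing $u := \sqrt{1-t^2}$ and using $\partial_t(t/u) = u^{-3}$, the second-order part collapses to $(1-t^2)\partial_t^2 - 2t\partial_t + (1-t^2)^{-1}\partial_\varphi^2$, which is exactly $\Delta^\ast_\xi$; the mixed terms $\pm\, i\,\partial_\varphi\partial_t$ cancel; the first-order $\partial_\varphi$ terms combine to $2iNt(1-t^2)^{-1}\partial_\varphi$; and the zeroth-order coefficient reduces to $(N - N(N+1)t^2)/(1-t^2)$, which one checks equals $N(N+1) - N^2/(1-t^2)$. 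Collecting these contributions reproduces precisely $\Delta^{\ast,N}_\xi + N(N+1)$.

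With the factorization in hand, I would apply Lemma~\ref{defspin_allN} twice. By \eqref{eqeth1} the operator $\eth_N$ raises the spin weight with factor $\sqrt{n(n+1)-N(N+1)}$, and by \eqref{eqeth2} the operator $\overline{\eth}_{N+1}$ lowers it back with factor $-\sqrt{n(n+1)-(N+1)N}$, so that
\begin{equation*}\overline{\eth}_{N+1}\,\eth_N\,{}_NY_{n,j} = -\bigl(n(n+1)-N(N+1)\bigr)\,{}_NY_{n,j}.\end{equation*}
Subtracting the constant $N(N+1)$ dictated by the operator identity then yields
\begin{equation*}\Delta^{\ast,N}_\xi\,{}_NY_{n,j} = \bigl[-\bigl(n(n+1)-N(N+1)\bigr) - N(N+1)\bigr]\,{}_NY_{n,j} = -n(n+1)\,{}_NY_{n,j},\end{equation*}
as claimed.

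The main obstacle is purely computational: the careful expansion of the operator product in the second step, where one must track the non-commuting action of $\partial_t$ on the coefficients $Nt/u$ and $1/u$ and verify the two delicate cancellations (the vanishing of the mixed $\partial_\varphi\partial_t$ terms and the simplification of the zeroth-order coefficient). Once the operator identity is confirmed, the eigenvalue follows immediately and uniformly in $N\in\mathbb Z$; the boundary case $N=n$, where the raising factor vanishes, is automatically consistent because both ladder relations are then read off from ${}_{n+1}Y_{n,j}=0$. As an equally valid alternative, one could instead verify $\eth_{N-1}\overline{\eth}_N = \Delta^{\ast,N}_\xi + N(N-1)$ and use the complementary pair of ladder relations; both routes deliver the same eigenvalue $-n(n+1)$.
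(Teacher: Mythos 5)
Your proposal is correct, and I verified the key operator identity $\overline{\eth}_{N+1}\,\eth_N = \Delta^{\ast,N}_\xi + N(N+1)$ by direct expansion (including the cancellation of the mixed $\partial_t\partial_\varphi$ terms and the zeroth-order simplification via $\partial_t(t/u)=u^{-3}$), as well as the sign bookkeeping when combining \eqref{eqeth1} with \eqref{eqeth2} at spin weight $N+1$, where $(N+1)\bigl((N+1)-1\bigr)=N(N+1)$ makes the two square-root factors match. The paper states Theorem~\ref{dglfol} without an in-text proof, deferring to \cite{handbook, diss}, and your ladder-operator factorization is precisely the standard argument used there, so this is essentially the same approach rather than a genuinely different route.
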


\noindent The spin-weighted spherical harmonics are part of a function space with a series of properties that we will need in the construction of the commuting operator for the spin-weighted Slepian functions.

\begin{defi}\label{defspacecomm}
We denote by $\mathrm X^k(\Gamma)$, $k\in\mathbb N_0$, the set of all functions $F\in\mathrm C^{(k)}(\Gamma)\cap\mathrm L^2\left(\overline{\Gamma}\right)$ which satisfy the following conditions, where $\xi=\xi(t,\varphi)\in\Gamma\subset\Omega$:
\begin{itemize}
\item $F$ has the form $H(t)e^{ij\varphi}$ for $j\in\mathbb Z$,
\item $F$ is bounded on $\overline\Gamma$,
\item $\partial_t^{ j } F(\xi) = \mathcal O\left(\left(1-t^2\right)^{\frac{1}{2}- j }\right)$ as $t\to\pm 1$ \ for all $j=0,\dots,k$,
\item $\Delta^{\ast,N}_\xi F(\xi) = \mathcal O(1)$ for $N\in\mathbb Z$ as $t\to\pm 1$.
\end{itemize}
\end{defi}

\begin{fol}\label{folYinspace}
The previous definition is chosen such that
\begin{equation*} {}_NY_{n,j}\in \mathrm X^k(\Omega_0) \end{equation*}
with $\overline{\Omega_0}=\Omega$, for all $n\in\mathbb N_0$, $N\in\mathbb Z$, $n\geq\vert N\vert$, $j=-n,\dots,n$, and all $k\in\mathbb N_0$.
\end{fol}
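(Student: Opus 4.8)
The plan is to verify the four bullet conditions of Definition~\ref{defspacecomm} together with the two ambient requirements ${}_NY_{n,j}\in\mathrm C^{(k)}(\Omega_0)$ and ${}_NY_{n,j}\in\mathrm L^2(\Omega)$, drawing directly on the closed-form representations already established. The $\mathrm L^2(\Omega)$-membership is immediate from the orthonormality in Theorem~\ref{orth}, which gives $\Vert {}_NY_{n,j}\Vert_{\mathrm L^2(\Omega)}=1<\infty$ (note $\overline{\Omega_0}=\Omega$). For the separated form required by the first bullet, I would invoke Theorem~\ref{wigner}, which writes ${}_NY_{n,j}(\xi)=(-1)^N\sqrt{(2n+1)/(4\pi)}\,e^{ij\varphi}\,d^n_{j,-N}(\vartheta)$ with $t=\cos\vartheta$, or equivalently the explicit series of Lemma~\ref{spinino}; either exhibits ${}_NY_{n,j}$ as $H(t)\,e^{ij\varphi}$, settling the first condition at once.

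The analytic work then rests on Lemma~\ref{spinino}. Substituting the definitions of $o^1_\xi,o^2_\xi,\hat o^1_\xi,\hat o^2_\xi$, each summand becomes, up to a constant and a phase, a monomial $(1+t)^{a_k/2}(1-t)^{b_k/2}$ with $a_k=2k+N-j$ and $b_k=2n-2k+j-N$; I would first recompute the $\varphi$-dependence to check that the half-integer phases $e^{\pm i\varphi/2}$ combine to the single factor $e^{ij\varphi}$, and note that the nonnegativity $a_k,b_k\ge 0$ follows exactly from the summation limits $\max\{0,j-N\}\le k\le\min\{n+j,n-N\}$. Since $(1\pm t)/2\in[0,1]$ on $[-1,1]$, every monomial is bounded by $1$, so the finite sum $H(t)$ is bounded on $\Omega$, which is the second condition. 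On $\Omega_0$ the factors $(1\pm t)^{a/2}$, $(1\pm t)^{b/2}$ and $e^{ij\varphi}$ are $\mathrm C^\infty$ because $t\in(-1,1)$ keeps $1\pm t$ away from $0$; hence ${}_NY_{n,j}\in\mathrm C^{(k)}(\Omega_0)$ for every $k$.

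The fourth condition is the most direct: by Theorem~\ref{dglfol} the spin-weighted Beltrami operator acts as $\Delta^{\ast,N}_\xi\,{}_NY_{n,j}=-n(n+1)\,{}_NY_{n,j}$, so the boundedness just established forces $\Delta^{\ast,N}_\xi\,{}_NY_{n,j}=\mathcal O(1)$ as $t\to\pm1$, with no further estimate needed.

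I expect the third condition, $\partial_t^m\,{}_NY_{n,j}=\mathcal O\big((1-t^2)^{1/2-m}\big)$ for the derivative orders $m=0,\dots,k$, to be the main obstacle, since it demands the exact order of vanishing of $H(t)$ and its $t$-derivatives at the poles rather than a crude bound. Minimizing $b_k$ (respectively $a_k$) over the summation range shows the slowest-decaying contribution is $(1-t)^{|j+N|/2}$ near $t=1$ and $(1+t)^{|j-N|/2}$ near $t=-1$; applying $\partial_t^m$ and using the Leibniz rule, the leading behavior is $(1-t)^{|j+N|/2-m}$ at $t=1$ and $(1+t)^{|j-N|/2-m}$ at $t=-1$, which I would compare pole-by-pole with the target, namely $\sim(1-t)^{1/2-m}$ at $t=1$ and $\sim(1+t)^{1/2-m}$ at $t=-1$. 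The delicate points are the borderline indices $j=-N$ and $j=N$, where the leading monomial is a pure power of $(1+t)$ or of $(1-t)$ respectively, so that the relevant derivatives stay bounded; these cases require separate bookkeeping against the prescribed exponent. Collecting the estimates at both poles for all $m\le k$ then completes the verification that ${}_NY_{n,j}\in\mathrm X^k(\Omega_0)$.
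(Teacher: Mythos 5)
Your plan is essentially the correct self-contained argument, and it is worth noting that the paper itself offers no proof of Corollary~\ref{folYinspace} at all --- it defers entirely to \cite{handbook, diss}. Your route is the natural one: Theorem~\ref{orth} for the $\mathrm L^2(\Omega)$ requirement, Theorem~\ref{wigner} or Lemma~\ref{spinino} for the separated form $H(t)e^{ij\varphi}$, the explicit monomial expansion of Lemma~\ref{spinino} for boundedness and smoothness on $\Omega_0$, and the eigenvalue relation of Theorem~\ref{dglfol} for the last bullet. Your bookkeeping checks out: the half-integer phases do sum to $e^{ij\varphi}$, the exponents $a_k=2k+N-j$ and $b_k=2n-2k+j-N$ are nonnegative precisely because of the summation limits, and minimizing over $k$ yields the leading behaviors $(1-t)^{\vert j+N\vert/2}$ at the North pole and $(1+t)^{\vert j-N\vert/2}$ at the South pole (note $b_k$ steps by $2$, so all competing terms decay at least as fast).

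One point must be sharpened rather than deferred: the ``separate bookkeeping'' you postpone for the borderline indices cannot succeed against the literal text of Definition~\ref{defspacecomm}. For $j=-N$, the function ${}_NY_{n,-N}$ does \emph{not} vanish as $t\to 1$ (by Theorem~\ref{wigner} with $d^n_{j,-N}(0)=\delta_{j,-N}$, its modulus tends to $\sqrt{(2n+1)/(4\pi)}$ there), so the order-zero instance of the third bullet, $F=\mathcal O\bigl((1-t^2)^{1/2}\bigr)$, fails outright --- already in the scalar case ${}_0Y_{n,0}=Y_{n,0}$ with $P_n(1)=1$. The corollary can only hold if the decay condition is read for derivative orders $1,\dots,k$, with the order-zero case subsumed by the boundedness bullet; under that reading your estimates close the argument, since for the borderline indices the relevant terms are smooth near the pole in question (even powers $b_k/2\in\mathbb N_0$, respectively $a_k/2$), bounded derivatives are trivially $\mathcal O\bigl((1-t^2)^{1/2-m}\bigr)$ for $m\geq 1$, and otherwise $\vert j\pm N\vert\geq 1$ gives exactly the prescribed exponent. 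An analogous reading convention is silently required for the fourth bullet: $\Delta^{\ast,N'}_\xi\,{}_NY_{n,j}$ is unbounded for generic $N'\neq N$ (e.g.\ $\Delta^{\ast,N'}Y_{n,0}$ contains the term $-N'^2\,Y_{n,0}(\xi)/(1-t^2)$ with $Y_{n,0}$ nonvanishing at the poles), so ``for $N\in\mathbb Z$'' must mean the spin weight matching the function, which is the reading your eigenvalue argument implicitly uses. With these two clarifications made explicit, your proof is complete.
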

\noindent See \cite{handbook, diss} for the proof of Corollary~\ref{folYinspace}.

\begin{theorem}[Green's Second Surface Identity for the Spin-Weighted Beltrami Operator]\label{green}
 Let $\Gamma\subset\Omega$ with a sufficiently smooth boundary $\partial\Gamma$. For $F,G\in\mathrm X^2\left(\overline\Gamma\right)$ 
\begin{align*}
&\int_\Gamma \left(F(\xi)\overline{\Delta^{\ast,N}_\xi G(\xi)}-\overline{G(\xi)}\Delta^{\ast,N}_\xi F(\xi)\right) \ \mathrm d\omega(\xi)\\
&=\int_{\partial\Gamma}\left(F(\xi)\frac{\partial}{\partial\nu(\xi)} \overline{G(\xi)}-\overline{G(\xi)}\frac{\partial}{\partial\nu(\xi)} F(\xi)\right) \ \mathrm d\sigma(\xi)-\int_\Gamma \frac{2iNt}{1-t^2} \ \partial_\varphi \left(F(\xi)\overline{G(\xi)}\right) \ \mathrm d\omega(\xi),
\end{align*}
if the integrals exist.
\newline
\newline
 As a special case, Green's second surface identity for the spin-weighted Beltrami operator $\Delta^{\ast,N}$ over the entire unit sphere yields
\begin{equation*}\int_\Omega \left(F(\xi)\overline{\Delta^{\ast,N}_\xi G(\xi)}-\overline{G(\xi)}\Delta^{\ast,N}_\xi F(\xi)\right) \ \mathrm d\omega(\xi)=0\end{equation*}
and for the region $R$ a polar cap, 
\begin{align*}
&\int_R \left(F(\xi)\overline{\Delta^{\ast,N}_\xi G(\xi)}-\overline{G(\xi)}\Delta^{\ast,N}_\xi F(\xi)\right) \ \mathrm d\omega(\xi)\\
&=\int_0^{2\pi} \left\lbrack \left(1-t^2\right)\left(\overline{G(\xi)}\partial_t F(\xi)-F(\xi)\partial_t\overline{G(\xi)}\right)\right\rbrack_{t=b} \ \mathrm d\varphi
\end{align*}
\noindent for $F,G\in \mathrm X^2(\Omega_0)$, where $\xi=\xi(t,\varphi)\in\Omega$.
\end{theorem}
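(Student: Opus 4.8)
The plan is to split the spin-weighted Beltrami operator into three pieces and exploit the (anti)symmetry of each piece under the bilinear combination in the statement. Writing
\[
\Delta^{\ast,N}_\xi = \Delta^\ast_\xi - \frac{N^2}{1-t^2} + \frac{2itN}{1-t^2}\,\partial_\varphi,
\]
I would treat the classical Beltrami operator $\Delta^\ast_\xi$, the real multiplication operator $V:=-N^2/(1-t^2)$, and the purely imaginary first-order operator $W\partial_\varphi$ with $W:=2itN/(1-t^2)$ separately inside the integrand $F\,\overline{\Delta^{\ast,N}_\xi G}-\overline{G}\,\Delta^{\ast,N}_\xi F$.

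For the algebraic core, note first that $\Delta^\ast_\xi$ has real coefficients, so $\overline{\Delta^\ast_\xi G}=\Delta^\ast_\xi\overline{G}$; applying the classical Green's second surface identity (Theorem~\ref{greenoriginal}) with $\overline{G}$ in place of $G$ turns $\int_\Gamma(F\,\Delta^\ast_\xi\overline{G}-\overline{G}\,\Delta^\ast_\xi F)\,\mathrm d\omega$ into the boundary integral $\int_{\partial\Gamma}(F\,\partial_\nu\overline{G}-\overline{G}\,\partial_\nu F)\,\mathrm d\sigma$. Since $V$ is real, $F\,\overline{VG}-\overline{G}\,VF=VF\overline{G}-V\overline{G}F=0$ pointwise, so the potential term drops out. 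Finally, $W$ is purely imaginary, $\overline{W}=-W$, so $F\,\overline{W\partial_\varphi G}-\overline{G}\,W\partial_\varphi F=-W\big(F\,\partial_\varphi\overline{G}+\overline{G}\,\partial_\varphi F\big)=-W\,\partial_\varphi(F\overline{G})$, which reproduces exactly the term $-\int_\Gamma \frac{2iNt}{1-t^2}\,\partial_\varphi(F\overline{G})\,\mathrm d\omega$. Adding the three contributions yields the general identity.

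The main obstacle is that $F,G\in\mathrm X^2(\overline\Gamma)$ need not be twice continuously differentiable up to the poles, so Theorem~\ref{greenoriginal} cannot be invoked directly whenever $\overline\Gamma$ contains a pole. I would handle this by excising geodesic caps of angular radius $\varepsilon$ about each pole in $\overline\Gamma$, applying the classical identity on the truncated region, and letting $\varepsilon\to0$. On a latitude circle $t=t_0\to\pm1$ one has $\mathrm d\sigma=\sqrt{1-t^2}\,\mathrm d\varphi$ and, since $\partial_\nu$ is the derivative along $\pm\varepsilon^t$, the relation $\partial_\nu=\pm\sqrt{1-t^2}\,\partial_t$; hence the extra boundary contribution is $\pm\int_0^{2\pi}\big[(1-t^2)(F\,\partial_t\overline{G}-\overline{G}\,\partial_t F)\big]_{t=t_0}\,\mathrm d\varphi$. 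The decay conditions of Definition~\ref{defspacecomm}, namely $F=\mathcal O\!\left((1-t^2)^{1/2}\right)$ and $\partial_t F=\mathcal O\!\left((1-t^2)^{-1/2}\right)$ as $t\to\pm1$, force this to be $\mathcal O(1-t^2)\to0$, so the polar circles contribute nothing in the limit. The same estimates show $\frac{1}{1-t^2}\partial_\varphi(F\overline{G})=\mathcal O(1)$, so the remaining volume integral converges and the integrals in the statement indeed exist.

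For the two special cases, rotational symmetry does the remaining work. Over the full sphere there is no genuine boundary, both excised polar circles vanish as above, and the leftover potential integral vanishes because $\int_0^{2\pi}\partial_\varphi(F\overline{G})\,\mathrm d\varphi=0$ by $2\pi$-periodicity in $\varphi$; hence the right-hand side is $0$. For a polar cap $R=\{t\ge b\}$ the interior pole is excised and contributes nothing, the same periodicity kills $\int_R\frac{2iNt}{1-t^2}\partial_\varphi(F\overline{G})\,\mathrm d\omega$, and the only surviving boundary is the latitude circle $t=b$, whose outward normal is $-\varepsilon^t$; with $\partial_\nu=-\sqrt{1-t^2}\,\partial_t$ and $\mathrm d\sigma=\sqrt{1-t^2}\,\mathrm d\varphi$ this yields precisely $\int_0^{2\pi}\big[(1-t^2)(\overline{G}\,\partial_t F-F\,\partial_t\overline{G})\big]_{t=b}\,\mathrm d\varphi$.
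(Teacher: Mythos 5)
Your proof is correct, and it follows the route that the statement's structure dictates; note that the paper itself does not prove Theorem~\ref{green} inline but defers to \cite{handbook, diss}. Your decomposition of $\Delta^{\ast,N}_\xi$ into the real-coefficient operator $\Delta^\ast_\xi$, the real potential $-N^2/(1-t^2)$, and the purely imaginary first-order term $\frac{2itN}{1-t^2}\,\partial_\varphi$ is exactly right: applying Theorem~\ref{greenoriginal} to the pair $\left(F,\overline G\right)$ produces the stated boundary integral, the potential cancels pointwise, and the imaginary term collapses to $-\frac{2iNt}{1-t^2}\,\partial_\varphi\left(F\overline G\right)$. Your sign bookkeeping at the cap boundary ($\nu=-\varepsilon^t$, hence $\partial_\nu=-\sqrt{1-t^2}\,\partial_t$ together with $\mathrm d\sigma=\sqrt{1-t^2}\,\mathrm d\varphi$) correctly reproduces $\int_0^{2\pi}\left\lbrack\left(1-t^2\right)\left(\overline G\,\partial_t F-F\,\partial_t\overline G\right)\right\rbrack_{t=b}\mathrm d\varphi$, and the pole-excision limit is properly justified by the $j=0,1$ decay conditions of Definition~\ref{defspacecomm}, which is indeed the essential point since $F,G\in\mathrm X^2(\Omega_0)$ need not be $\mathrm C^{(2)}$ at the pole contained in the cap.

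One small point you should make explicit: the decay conditions on $F$ and $G$ alone do not control $\partial_\varphi\left(F\overline G\right)$, so your remark that ``the same estimates'' give $\frac{1}{1-t^2}\,\partial_\varphi\left(F\overline G\right)=\mathcal O(1)$ silently uses the structural hypothesis in Definition~\ref{defspacecomm} that members of $\mathrm X^2$ have the separable form $H(t)e^{ij\varphi}$. With $F=H_F(t)e^{ij_F\varphi}$ and $G=H_G(t)e^{ij_G\varphi}$ one gets $\partial_\varphi\left(F\overline G\right)=i\left(j_F-j_G\right)F\overline G=\mathcal O\!\left(1-t^2\right)$, which is what makes the correction integral convergent near the poles and legitimizes the Fubini step behind the $2\pi$-periodicity argument that kills this term on $\Omega$ and on the polar cap. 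This is a matter of exposition rather than a gap, since the needed hypothesis is part of the definition you invoke.
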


\noindent See \cite{handbook, diss} for a proof of Theorem~\ref{green}. It can also be shown that the spin-weighted spherical harmonic functions form a complete orthonormal system for $\mathrm{L}^2(\Omega)$, see e.g.\ \cite{handbook,diss} for a proof.

\begin{theorem}\label{only}
The set of functions $\left\lbrace {}_NY_{n,j}\right\rbrace_{n\geq \vert N\vert, j=-n,\dots,n}$ forms a complete orthonormal system for $\left(\mathrm L^2(\Omega),\langle \cdot,\cdot\rangle_{\mathrm L^2(\Omega)}\right)$. Consequently, for $F\in\mathrm L^2(\Omega)$, we obtain
\begin{equation*} \lim_{L\to \infty} \left\Vert F-\sum_{n=\vert N\vert}^L\sum_{j=-n}^n \langle F,{}_NY_{n,j}\rangle_{\mathrm L^2(\Omega)} \ {}_NY_{n,j}\right\Vert_{\mathrm L^2(\Omega)} =0. \end{equation*}
 Hence, for every function $F\in \mathrm L^2(\Omega)$ and for every $N\in\mathbb Z$, there exist unique coefficients ${}_NF_{n,j}$ such that
\begin{equation*}F=\sum_{n=\vert N\vert}^\infty \sum_{j=-n}^n {}_NF_{n,j} \ {}_NY_{n,j}\end{equation*}
 in the sense of $\mathrm{L}^2(\Omega)$. Furthermore, for every function $F,G\in\mathrm L^2(\Omega)$ and every $N\in\mathbb Z$, the spin-weighted spherical harmonics satisfy the Parseval identity
\begin{equation*}\langle F,G\rangle_{\mathrm L^2(\Omega)} = \sum_{n=\vert N\vert}^\infty \sum_{j=-n}^n \left\langle F,{}_NY_{n,j}\right\rangle_{\mathrm L^2(\Omega)} \overline{\left\langle G,{}_NY_{n,j}\right\rangle_{\mathrm L^2(\Omega)}}\end{equation*}
and consequently,
\begin{equation*}\Vert F\Vert^2_{\mathrm L^2(\Omega)} = \sum_{n=\vert N\vert}^\infty \sum_{j=-n}^n \left\vert \left\langle F,{}_NY_{n,j}\right\rangle_{\mathrm L^2(\Omega)}\right\vert^2.\end{equation*}
\end{theorem}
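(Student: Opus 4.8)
The orthonormality half of the assertion is already supplied by Theorem~\ref{orth}, so the core of the proof is the completeness (totality) of the system $\{{}_NY_{n,j}\}_{n\geq|N|,\,j=-n,\dots,n}$ in $\mathrm L^2(\Omega)$ for each fixed $N\in\mathbb Z$. Once totality is established, the stated $\mathrm L^2$-expansion, the uniqueness of the coefficients ${}_NF_{n,j}$, the Parseval identity, and the norm identity are all immediate consequences of the standard Hilbert-space theory of complete orthonormal systems: a total orthonormal set is a Hilbert basis, and Bessel's inequality becomes an equality precisely for such sets. The plan is therefore to reduce the completeness claim to a one-dimensional statement and to settle that statement using classical orthogonal-polynomial theory.

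First I would fix $N$ and take an arbitrary $F\in\mathrm L^2(\Omega)$ with $\langle F,{}_NY_{n,j}\rangle_{\mathrm L^2(\Omega)}=0$ for all admissible $n,j$; the goal is to conclude $F=0$. Writing $\xi=\xi(t,\varphi)$ and using the identity $\mathrm d\omega=\mathrm dt\,\mathrm d\varphi$ on $\Omega_0$, I would expand $F$ in its Fourier series in $\varphi$, say $F(t,\varphi)=\sum_{j} H_j(t)\,e^{ij\varphi}$ with $\sum_j\int_{-1}^1|H_j(t)|^2\,\mathrm dt<\infty$. By the Wigner-$d$ representation of Theorem~\ref{wigner}, each ${}_NY_{n,j}$ carries exactly the azimuthal factor $e^{ij\varphi}$ times a real function of $t$, so the orthogonality conditions decouple across the Fourier modes and reduce, for each fixed $j$, to
\begin{equation*}\int_{-1}^1 H_j(t)\,d^n_{j,-N}(\vartheta)\,\mathrm dt=0\qquad\text{for all }n\geq n_j:=\max\{|N|,|j|\}.\end{equation*}
Thus completeness in $\mathrm L^2(\Omega)$ is equivalent to the completeness of the radial family $\{d^n_{j,-N}\}_{n\geq n_j}$ in $\mathrm L^2([-1,1],\mathrm dt)$ for every pair $(j,N)$.

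This last, purely one-dimensional statement is where the real work lies. The key is the classical factorization of the Wigner small-$d$ function into a weight times a Jacobi polynomial: with $a:=|j+N|$ and $b:=|j-N|$ one has $d^n_{j,-N}(\vartheta)=C_{n,j,N}\,(1-t)^{a/2}(1+t)^{b/2}\,P^{(a,b)}_{n-n_j}(t)$ for nonzero constants $C_{n,j,N}$, where $P^{(a,b)}_k$ is the Jacobi polynomial of degree $k$ and $t=\cos\vartheta$. Substituting $G(t):=H_j(t)\,(1-t)^{-a/2}(1+t)^{-b/2}$ turns the above orthogonality relations into $\int_{-1}^1 G(t)\,P^{(a,b)}_k(t)\,(1-t)^a(1+t)^b\,\mathrm dt=0$ for all $k\geq0$, and the same substitution shows $G\in\mathrm L^2\big([-1,1],(1-t)^a(1+t)^b\,\mathrm dt\big)$ with norm equal to $\|H_j\|_{\mathrm L^2(\mathrm dt)}$. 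Since the Jacobi polynomials $\{P^{(a,b)}_k\}_{k\geq0}$ form a complete orthogonal system in that weighted space, $G=0$, hence $H_j=0$, and therefore $F=0$. The main obstacle is precisely this bookkeeping step: one must verify the exponents $a,b$ and the degree shift $n\mapsto n-n_j$ in the Jacobi factorization against the paper's normalization of ${}_NY_{n,j}$ (Theorem~\ref{wigner} or Lemma~\ref{spinino}), and check that the change of unknown $H_j\leftrightarrow G$ is an isometry between $\mathrm L^2(\mathrm dt)$ and the weighted space so that no element of the target $\mathrm L^2([-1,1],\mathrm dt)$ escapes the argument. A ladder-operator alternative, repeatedly applying $\overline\eth$ from Lemma~\ref{defspin_allN} to descend to the classical case $N=0$, is available, but it requires care at the boundary degree $n=|N|$, where one of the coefficients $\sqrt{n(n+1)-N(N\pm1)}$ vanishes; the Jacobi-polynomial route avoids this subtlety.
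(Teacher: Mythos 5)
Your proposal is correct, but note that the paper itself contains no proof of Theorem~\ref{only}: it defers entirely to \cite{handbook,diss}, where the completeness is established within the general machinery built there for the spin-weighted harmonics (via their realization through Wigner $D$-functions and the associated group-theoretic/ladder-operator structure, cf.\ Theorem~\ref{wigner} and Lemma~\ref{defspin_allN}). Your argument is therefore a genuinely self-contained alternative, and a sound one: the reduction through the Fourier decomposition in $\varphi$ is legitimate because $\mathrm d\omega=\mathrm dt\,\mathrm d\varphi$ and each ${}_NY_{n,j}$ carries the single azimuthal factor $e^{ij\varphi}$ times the real function $d^n_{j,-N}$, so orthogonality to $F$ does decouple mode by mode; your Jacobi bookkeeping is also right --- with $m=j$, $m'=-N$ one has $\mu=|j+N|$, $\nu=|j-N|$, degree shift $s=n-\max\{|j|,|N|\}=n-n_j$, matching your $a,b$ and the range $k\geq 0$ exactly, and the substitution $G=H_j(1-t)^{-a/2}(1+t)^{-b/2}$ is indeed an isometry onto the weighted space, so completeness of $\{P_k^{(a,b)}\}_{k\geq 0}$ in $\mathrm L^2([-1,1],(1-t)^a(1+t)^b\,\mathrm dt)$ (classical, since the weight is integrable on a compact interval and polynomials are dense there) finishes the totality claim; the remaining assertions (expansion, uniqueness of coefficients, Parseval) then follow from standard Hilbert-space theory, as you say. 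What each approach buys: the reference's route obtains completeness as a byproduct of the representation-theoretic framework already needed elsewhere (addition theorem, recursions), while yours is elementary, uses only Theorem~\ref{wigner} plus classical orthogonal-polynomial theory, and --- as you correctly observe --- sidesteps the degenerate ladder coefficient $\sqrt{n(n+1)-N(N\pm 1)}$ at $n=|N|$ that would complicate a descent-to-$N=0$ argument.
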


\noindent Analogously to the classical harmonic function spaces (see e.g.~\cite{freeden}), we can now define the function spaces for the spin-weighted spherical harmonics. 

\begin{defi}
A function $Y_n\in \mathrm C^{(2)}(\Omega_0)$ of degree $n\in\mathbb N_0$ is called $(\ast,N)$-harmonic for $N\in\mathbb Z$, if for all $\xi\in\Omega_0$
\begin{equation*}\Delta_\xi^{\ast,N} \ Y_n(\xi)=-n(n+1) Y_n(\xi).\end{equation*}
\end{defi}

\begin{defi}
With $\mathrm{Harm}_n^N(\Omega_0)$, we denote the set of the $(\ast,N)$-harmonic functions of degree $n\in\mathbb N_0$ and spin weight $N\in\mathbb Z$. This means that
\begin{align*}
\mathrm{Harm}_n^N(\Omega_0):&=\left\lbrace {}_NP_n \ \Bigl\vert \ {}_NP_n\in\mathrm C^{(2)}(\Omega_0) \text{ is a } (\ast,N)\text{-harmonic function of spin} \text{ weight } N \right.\\
&\qquad\qquad\qquad\qquad\qquad\qquad\qquad\qquad\qquad\qquad\qquad\qquad\qquad\text{ and degree } n \Bigr\rbrace.
\end{align*}
\end{defi}

\begin{defi}
 Analogously to the (spin-free) notations (cf.~Definition \ref{vectorharm} and \cite{volker}), we define the spaces
\begin{equation*}\mathrm{Harm}^N_{0\dots n}(\Omega_0):= \bigoplus_{i=0}^n \mathrm{Harm}^N_i(\Omega_0)\end{equation*}
and
\begin{equation*}\mathrm{Harm}^N_{0\dots \infty}(\Omega_0):= \bigcup_{i=0}^\infty \mathrm{Harm}^N_{ i }(\Omega_0).\end{equation*}
\end{defi}

\noindent Note that multiplying a $(\ast,N)$-harmonic function by $r^n$, $r\in\mathbb R$, $n\in\mathbb N_0$ does \textit{not} necessarily render it a harmonic function. Similarly, a function of spin weight $N\neq 0$ is generally \textit{not} homogeneous. Only for the special case of spin weight zero do we get $\mathrm{Harm}_n^N(\Omega_0)=\mathrm{Harm}_n(\Omega_0)$, the set of the to the unit sphere restricted harmonic and homogeneous polynomials of degree $n\in\mathbb N_0$.

\begin{fol}\label{cor_functioninharm}
The spin-weighted spherical harmonics ${}_NY_n$ of spin weight $N\in\mathbb Z$ and degree $n\in\mathbb N_0$, $n\geq\vert N\vert$, span the set $\mathrm{Harm}_n^N(\Omega_0)$. The functions $\left\lbrace {}_NY_{n,j}\right\rbrace_{j=-n,\dots,n}$ form an orthonormal system in \ the space $\left( \mathrm{Harm}_n^N(\Omega_0),\langle \cdot,\cdot \rangle_{\mathrm L^2(\Omega)}\right)$.
\end{fol}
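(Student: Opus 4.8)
The plan is to prove orthonormality and spanning separately, treating the former as essentially immediate and concentrating the real work on the latter. For fixed $N$ and $n$, Theorem~\ref{orth} gives $\langle {}_NY_{n,j}, {}_NY_{n,j'}\rangle_{\mathrm L^2(\Omega)} = \delta_{j,j'}$ directly, so $\{{}_NY_{n,j}\}_{j=-n,\dots,n}$ is an orthonormal family in $\left(\mathrm{Harm}_n^N(\Omega_0), \langle\cdot,\cdot\rangle_{\mathrm L^2(\Omega)}\right)$ as soon as we know each ${}_NY_{n,j}$ lies in $\mathrm{Harm}_n^N(\Omega_0)$; this membership is exactly Theorem~\ref{dglfol} together with Corollary~\ref{folYinspace}, since ${}_NY_{n,j}\in\mathrm C^{(2)}(\Omega_0)$ and $\Delta^{\ast,N}_\xi {}_NY_{n,j} = -n(n+1)\,{}_NY_{n,j}$, so ${}_NY_{n,j}$ is $(\ast,N)$-harmonic of degree $n$. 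Orthonormality forces linear independence, whence $\mathrm{span}\{{}_NY_{n,j}\}_{j=-n}^{n}\subseteq\mathrm{Harm}_n^N(\Omega_0)$ and $\dim\mathrm{Harm}_n^N(\Omega_0)\geq 2n+1$.

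The remaining task is the matching upper bound $\dim\mathrm{Harm}_n^N(\Omega_0)\leq 2n+1$, which is the core of the statement. A quick route reads it off Definition~\ref{entwspin}: a function of spin weight $N$ and degree $n$ is a linear combination of the products $o^{i_1}\cdots o^{i_{n+N}}\hat o^{i_{n+N+1}}\cdots\hat o^{i_{2n}}$ with totally symmetric coefficients $d_{i_1\dots i_{2n}}$, and a totally symmetric array with entries in $\{1,2\}$ is fixed by the number of indices equal to $1$, which ranges over $0,\dots,2n$; such functions therefore span a space of dimension at most $2n+1$. Granting for the moment that every element of $\mathrm{Harm}_n^N(\Omega_0)$ is of this form, and recalling that Lemma~\ref{spinino} exhibits each ${}_NY_{n,j}$ precisely as such a product expansion, the $2n+1$ orthonormal functions ${}_NY_{n,j}$ are forced to be a basis, and the lower bound upgrades this to equality and to the claimed spanning.

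The main obstacle is to justify that proviso, i.e.\ to bridge the two descriptions of $\mathrm{Harm}_n^N(\Omega_0)$: its definition demands only a $\mathrm C^{(2)}(\Omega_0)$ solution of $\Delta^{\ast,N}_\xi Y_n = -n(n+1)Y_n$, whereas the counting needs every such $Y_n$ to be genuinely of spin weight $N$ and degree $n$. I would close this gap by separating variables, writing $Y_n=\sum_j H_j(t)e^{ij\varphi}$ and substituting into the explicit form of $\Delta^{\ast,N}_\xi$ from Theorem~\ref{dglfol}; each Fourier coefficient $H_j$ then solves the spin-weighted associated Legendre equation, a second-order ODE with regular singular points at $t=\pm1$. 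An indicial analysis at each pole shows that only a one-dimensional space of solutions stays bounded, the second solution carrying a logarithmic or negative-power singularity, so for each $j$ with $|j|\leq n$ there is at most one admissible mode, a multiple of ${}_NY_{n,j}$, and none for $|j|>n$. This regularity analysis at the poles, rather than the algebra, is where the work lies, and it is exactly the point at which the boundary behavior encoded in $\mathrm X^k(\Omega_0)$ through Corollary~\ref{folYinspace} must be used to discard the singular solution and pin $\dim\mathrm{Harm}_n^N(\Omega_0)$ at $2n+1$.
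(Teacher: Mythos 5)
Your first paragraph together with your ``quick route'' is already the complete and intended proof, and the proviso you then spend your third paragraph trying to discharge is not actually open. The paper defines $\mathrm{Harm}_n^N(\Omega_0)$ as the set of functions ${}_NP_n\in\mathrm{C}^{(2)}(\Omega_0)$ that are $(\ast,N)$-harmonic \emph{of spin weight $N$ and degree $n$}; membership in the Definition~\ref{entwspin} class is thus part of the definition, not a consequence to be extracted from the eigenvalue equation. So the argument closes as follows: orthonormality is Theorem~\ref{orth}; each ${}_NY_{n,j}$ lies in $\mathrm{Harm}_n^N(\Omega_0)$ by Theorem~\ref{dglfol} (the eigenvalue equation) combined with Lemma~\ref{spinino} (it is a function of spin weight $N$ and degree $n$) --- note it is Lemma~\ref{spinino}, not Corollary~\ref{folYinspace}, that certifies membership, since the latter only concerns the auxiliary space $\mathrm{X}^k(\Omega_0)$; and your count of totally symmetric arrays with indices in $\{1,2\}$ caps $\dim\mathrm{Harm}_n^N(\Omega_0)$ at $2n+1$, so the $2n+1$ orthonormal functions ${}_NY_{n,j}$ must span. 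Nothing further is needed.

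Your proposed bridge, had it been needed, would not hold up as written, and this is worth understanding. On $\Omega_0$ the poles are excised, so for every $j\in\mathbb Z$ \emph{both} Frobenius solutions of the spin-weighted Legendre ODE yield $\mathrm{C}^{(2)}(\Omega_0)$ solutions of $\Delta_\xi^{\ast,N}Y=-n(n+1)Y$; under a pure-PDE reading of the definition the eigenspace would be infinite-dimensional, and your indicial analysis can discard the singular branch only if boundedness at $t=\pm 1$ is hypothesized. Corollary~\ref{folYinspace} cannot supply that hypothesis: it states that the specific functions ${}_NY_{n,j}$ lie in $\mathrm{X}^k(\Omega_0)$ and says nothing about an arbitrary element of $\mathrm{Harm}_n^N(\Omega_0)$, so invoking it to constrain a general eigenfunction is a non sequitur. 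The statement in the paper that genuinely encodes the pole behavior you are after is Theorem~\ref{eigenfuncs}, which restricts to $\mathrm{X}^2(\Omega_0)$ precisely so that the boundedness and $H(t)e^{ij\varphi}$ structure are available, and whose proof the paper delegates to the references cited there; your separation-of-variables argument is in essence a sketch of that theorem's proof, misattached to a space whose definition does not include the required boundary behavior.
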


\begin{remark}\label{Rem:HarmpqN}
For $ |N| \leq p \leq q \leq \infty$, we define 
\begin{equation*} \mathrm{Harm}^N_{p\dots q}(\Omega_0) := \mathrm{span} \left\lbrace {}_NY_{n,j}\right\rbrace_{n=p,\dots,q,j=-n,\dots,n}.\end{equation*}
\end{remark}

\begin{theorem}\label{eigenfuncs}
 For all $N\in\mathbb Z$, all $n\in\mathbb N_0$, $n\geq\vert N\vert$, and all $j=-n,\dots,n$, the spin-weighted spherical harmonics ${}_NY_{n,j}$ and their linear combinations are the only eigenfunctions of the differential operator $\Delta^{\ast,N}$ in $\mathrm X^2(\Omega_0)$. Their eigenvalues are $-n(n+1)$.
\end{theorem}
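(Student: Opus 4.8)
The plan is to split the claim into an ``easy'' inclusion and a ``hard'' inclusion. The easy direction --- that each ${}_NY_{n,j}$, and hence every linear combination of a fixed degree $n$, is an eigenfunction with eigenvalue $-n(n+1)$ --- is essentially already in hand: Theorem~\ref{dglfol} gives $\Delta^{\ast,N}_\xi\,{}_NY_{n,j}=-n(n+1)\,{}_NY_{n,j}$, linearity of $\Delta^{\ast,N}$ transports this to any same-degree combination, and Corollary~\ref{folYinspace} certifies that these functions indeed lie in $\mathrm X^2(\Omega_0)$. The substance of the theorem is therefore the converse: no function in $\mathrm X^2(\Omega_0)$ outside the span of same-degree spin-weighted harmonics can be an eigenfunction.

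For the converse I would start from an arbitrary $F\in\mathrm X^2(\Omega_0)$ satisfying $\Delta^{\ast,N}_\xi F=\lambda F$ for some $\lambda\in\mathbb C$ and examine its spin-weighted Fourier coefficients against each ${}_NY_{n,j}$. The key tool is the self-adjointness of $\Delta^{\ast,N}$ furnished by Green's second surface identity over the whole sphere (Theorem~\ref{green}): applying it with $G={}_NY_{n,j}$, substituting $\Delta^{\ast,N}{}_NY_{n,j}=-n(n+1)\,{}_NY_{n,j}$ and $\Delta^{\ast,N}F=\lambda F$, and using that the eigenvalue $-n(n+1)$ is real, the right-hand side is zero and I am left with $\bigl(\lambda+n(n+1)\bigr)\,\langle F,{}_NY_{n,j}\rangle_{\mathrm L^2(\Omega)}=0$ for every admissible pair $n,j$.

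The endgame then rests on two elementary observations. First, $n\mapsto n(n+1)$ is strictly increasing on $\mathbb N_0$, so for a fixed $\lambda$ at most one index $n_0\geq|N|$ can satisfy $\lambda+n_0(n_0+1)=0$; for every other $n$ the scalar factor does not vanish, whence $\langle F,{}_NY_{n,j}\rangle_{\mathrm L^2(\Omega)}=0$ for all $j$. Second, since $F\in\mathrm X^2(\Omega_0)\subset\mathrm L^2(\Omega)$ and the system $\{{}_NY_{n,j}\}$ is complete and orthonormal in $\mathrm L^2(\Omega)$ (Theorem~\ref{only}), the vanishing of all coefficients except those of degree $n_0$ forces $F=\sum_{j=-n_0}^{n_0}\langle F,{}_NY_{n_0,j}\rangle_{\mathrm L^2(\Omega)}\,{}_NY_{n_0,j}$ with $\lambda=-n_0(n_0+1)$; if no such $n_0$ exists, all coefficients vanish and $F=0$, so no nontrivial eigenfunction occurs off the spectrum. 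This is exactly the asserted characterization. (One may note that the single-azimuthal-frequency requirement $F=H(t)e^{ij\varphi}$ built into Definition~\ref{defspacecomm} in fact collapses this sum to a single term, but the weaker ``linear combination'' conclusion already matches the statement.)

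The proof is short once the prior machinery is in place, so the one point genuinely requiring attention is the applicability of Green's identity (Theorem~\ref{green}) to the pair $(F,{}_NY_{n,j})$: one must confirm that the integrals appearing there exist. For ${}_NY_{n,j}$ this is Corollary~\ref{folYinspace}, and for the eigenfunction $F$ it is precisely the membership $F\in\mathrm X^2(\Omega_0)$ --- in particular the pole-decay bounds $\partial_t^{\,j}F=\mathcal O\bigl((1-t^2)^{1/2-j}\bigr)$ and $\Delta^{\ast,N}F=\mathcal O(1)$ of Definition~\ref{defspacecomm} --- that I am assuming. Granting that, the self-adjointness relation holds verbatim and the completeness in Theorem~\ref{only} finishes the argument; the real analytic difficulty lives in those cited results rather than in the present deduction.
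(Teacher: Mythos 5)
Your proof is correct. Note that the paper itself contains no proof of Theorem~\ref{eigenfuncs} --- it defers entirely to \cite{handbook, diss} --- so there is no in-paper argument to compare against; your route (self-adjointness of $\Delta^{\ast,N}$ over the whole sphere via the special case of Theorem~\ref{green}, giving $\bigl(\lambda+n(n+1)\bigr)\langle F,{}_NY_{n,j}\rangle_{\mathrm L^2(\Omega)}=0$, then strict monotonicity of $n\mapsto n(n+1)$ on $\mathbb N_0$ plus completeness from Theorem~\ref{only}) is the standard Sturm--Liouville-type argument and is precisely the natural spin-weighted analogue of how the scalar case is handled in the literature the paper builds on. Your handling of the delicate points is sound: allowing $\lambda\in\mathbb C$ a priori and recovering its realness from the existence of a nonvanishing coefficient; disposing of values $\lambda=-n(n+1)$ with $n<\vert N\vert$ (all coefficients then vanish, so $F=0$ and no eigenfunction exists); and checking that both $F$ and ${}_NY_{n,j}$ satisfy the $\mathrm X^2(\Omega_0)$ hypotheses of Theorem~\ref{green} via Definition~\ref{defspacecomm} and Corollary~\ref{folYinspace}. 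Two refinements you could make explicit but which cost nothing: first, the conclusion $F=\sum_{j=-n_0}^{n_0}\langle F,{}_NY_{n_0,j}\rangle_{\mathrm L^2(\Omega)}\,{}_NY_{n_0,j}$ from Theorem~\ref{only} is initially only an $\mathrm L^2(\Omega)$-identity, and you should upgrade it to a pointwise identity on $\Omega_0$ by noting that both sides are continuous (the right side being a finite sum) and agree almost everywhere; second, your closing parenthetical is in fact slightly stronger than you claim --- since Definition~\ref{defspacecomm} forces $F=H(t)e^{ij\varphi}$, orthogonality in $\varphi$ kills all coefficients with azimuthal order different from $j$, so each eigenfunction in $\mathrm X^2(\Omega_0)$ is a scalar multiple of a single ${}_NY_{n_0,j}$, which makes the theorem's phrase ``and their linear combinations'' refer to combinations outside the single-frequency class. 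Neither point is a gap; the argument stands as written.
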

\noindent See \cite{handbook, diss} for the proof of Theorem~\ref{eigenfuncs}. The following theorem allows us to relate the spin-weighted spherical harmonics to the classical scalar, vector, and tensor harmonics. 

\begin{theorem}\label{relation_tens23}
 The spin-weighted spherical harmonics multiplied by unit
 vectors $\tau_\pm$ or unit tensors $\tau_\pm\otimes \xi$, $\xi \otimes \tau_\pm$, or $\tau_\pm\otimes \tau_\pm$, for
\begin{equation*}\tau_\pm := -\frac{1}{\sqrt{2}} \ \left(\varepsilon^t \pm i \varepsilon^\varphi\right) \quad\text{ and }\quad \varepsilon^r = \xi \in \Omega \end{equation*}
 are linear combinations of classical
 scalar, vector, and tensor spherical harmonics in the following way. 
\begin{itemize}
\item The scalar spherical harmonics are equal to the spin-weighted spherical harmonics of spin weight 0, 
 \begin{equation*}{}_0 Y_{n,j} = Y_{n,j}.\end{equation*}
\item Spin-weighted spherical harmonics of spin weight~$\pm 1$, multiplied by the unit vectors $\tau_\pm$ are linear combinations of the classical tangential vector spherical harmonics, 
\begin{equation*}\pm \frac{1}{\sqrt{2}} \ \left( -y_{n,j}^{(2)}(\xi) \pm i y_{n,j}^{(3)} (\xi)\right) = {}_{\pm 1} Y_{n,j}(\xi) \tau_\pm.\end{equation*}
\item Spin-weighted spherical harmonics of spin weight~$\pm 1$ multiplied by the unit tensor $\xi \otimes \tau_\pm$ are linear combinations of the left normal/right tangential tensor spherical harmonics, 
 \begin{equation*}\pm \frac{1}{\sqrt{2}} \ \left( -\boldsymbol y_{n,j}^{(1,2)}(\xi) \pm i \boldsymbol y_{n,j}^{(1,3)}(\xi) \right) = {}_{\pm 1} Y_{n,j}(\xi) \left( \xi \otimes \tau_\pm \right).\end{equation*}
 Spin-weighted spherical harmonics of spin weight~$\pm 1$ multiplied by the unit tensor $\tau_\pm \otimes \xi$ are linear combinations of the left tangential/right normal tensor spherical harmonics, 
 \begin{equation*}\pm \frac{1}{\sqrt{2}} \ \left( -\boldsymbol y_{n,j}^{(2,1)}(\xi) \pm i \boldsymbol y_{n,j}^{(3,1)}(\xi) \right) = {}_{\pm 1} Y_{n,j}(\xi) \left( \tau_\pm \otimes \xi \right).\end{equation*}
 Spin-weighted spherical harmonics of spin weight~$\pm 2$, multiplied by the unit tensors $\tau_\pm \otimes \tau_\pm$ are linear combinations of the tangential tensor spherical harmonics, 
\begin{equation*}-\frac{1}{\sqrt{2}} \ \left( -\boldsymbol y_{n,j}^{(2,3)}(\xi) \pm i \boldsymbol y_{n,j}^{(3,2)}(\xi) \right) = {}_{\pm 2} Y_{n,j}(\xi) \left( \tau_\pm \otimes \tau_\pm \right).\end{equation*}
\end{itemize}
\end{theorem}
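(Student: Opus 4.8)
The plan is to reduce every identity to the two spin--raising/lowering relations of Lemma~\ref{defspin_allN} by first rewriting the classical surface operators $\nabla_\xi^\ast$ and $L_\xi^\ast$ in the complex null frame $\{\tau_+,\tau_-\}$. The scalar identity is immediate: for $N=0$ Definition~\ref{newdef} gives ${}_0Y_{n,j}=\sqrt{n!/n!}\,\eth^0_0 Y_{n,j}=Y_{n,j}$, since $\eth^0_0=\mathrm{Id}$.

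First I would invert the defining relation for $\tau_\pm$ to obtain $\varepsilon^t=-\tfrac{1}{\sqrt2}(\tau_++\tau_-)$ and $\varepsilon^\varphi=\tfrac{i}{\sqrt2}(\tau_+-\tau_-)$, and substitute these into Definition~\ref{nablaLdefi}. Comparing the resulting coefficients of $\partial_t$ and $\partial_\varphi$ with Definition~\ref{defieth} at spin weight $0$ yields the key operator identities, valid on scalars,
\begin{align*}
\nabla_\xi^\ast &= -\tfrac{1}{\sqrt2}\left(\tau_+\,\eth_0+\tau_-\,\overline{\eth}_0\right),\\
L_\xi^\ast &= -\tfrac{i}{\sqrt2}\left(\tau_+\,\eth_0-\tau_-\,\overline{\eth}_0\right).
\end{align*}
Forming the combinations that isolate a single frame vector then gives $(-\nabla_\xi^\ast+iL_\xi^\ast)=\sqrt2\,\tau_+\eth_0$ and $(-\nabla_\xi^\ast-iL_\xi^\ast)=\sqrt2\,\tau_-\overline{\eth}_0$.

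For the vector identity I would apply $(-\nabla_\xi^\ast\pm iL_\xi^\ast)$ to $Y_{n,j}={}_0Y_{n,j}$, use \eqref{eqeth1} and \eqref{eqeth2} to replace $\eth_0\,{}_0Y_{n,j}=\sqrt{n(n+1)}\,{}_1Y_{n,j}$ and $\overline{\eth}_0\,{}_0Y_{n,j}=-\sqrt{n(n+1)}\,{}_{-1}Y_{n,j}$, and observe that the factor $\sqrt{n(n+1)}$ cancels the normalization $1/\sqrt{n(n+1)}$ in Definition~\ref{vectorharm}; the prefactor $\pm\tfrac{1}{\sqrt2}$ and the sign in \eqref{eqeth2} then combine to give exactly ${}_{\pm1}Y_{n,j}\tau_\pm$. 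The two spin--weight--$1$ tensor identities follow at once: since $\boldsymbol y_{n,j}^{(1,2)},\boldsymbol y_{n,j}^{(1,3)}$ carry $\xi\otimes(\cdot)$ and $\boldsymbol y_{n,j}^{(2,1)},\boldsymbol y_{n,j}^{(3,1)}$ carry $(\cdot)\otimes\xi$ in front of the same surface--operator combinations as the vector case, the computation factors through the vector result with $\xi$ tensored inertly on the appropriate side.

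The spin--weight--$2$ case is the crux and the main obstacle. Here I would first check the purely algebraic identity that, for the $+$ sign, the second--order part of $\boldsymbol y_{n,j}^{(2,3)}-i\boldsymbol y_{n,j}^{(3,2)}$ factors as $(\nabla_\xi^\ast-iL_\xi^\ast)\otimes(\nabla_\xi^\ast-iL_\xi^\ast)Y_{n,j}$ while the lower--order part collapses to $2(\nabla_\xi^\ast-iL_\xi^\ast)Y_{n,j}\otimes\xi$. The delicate point is that applying the outer operator $(\nabla_\xi^\ast-iL_\xi^\ast)=-\sqrt2\,\tau_+\eth_0$ to the spin--$1$ vector field $\propto\tau_+\,{}_1Y_{n,j}$ also differentiates the position--dependent frame vectors $\tau_+$ and $\xi$, and these connection terms produce radial ($\otimes\xi$) contributions. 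I expect the heart of the proof to be verifying that these frame--derivative terms are exactly cancelled by the explicit correction $2(\nabla_\xi^\ast-iL_\xi^\ast)Y_{n,j}\otimes\xi$, so that the net result is the purely tangential tensor $2\,\tau_+\otimes\tau_+\,\eth_1\eth_0 Y_{n,j}$. Once this cancellation is established, \eqref{eqeth1} gives $\eth_1\eth_0 Y_{n,j}=\sqrt{n(n+1)(n(n+1)-2)}\,{}_2Y_{n,j}$, and the prefactor $-\tfrac{1}{\sqrt2}$ together with the normalization $1/\sqrt{2n(n+1)(n(n+1)-2)}$ reduces the expression to ${}_2Y_{n,j}(\tau_+\otimes\tau_+)$; the $-2$ case is entirely analogous using $\overline{\eth}$. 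Managing the moving--frame derivatives---equivalently, the surface connection coefficients encoded in relations such as $\nabla_\xi^\ast\xi=\boldsymbol{\mathrm i}_\mathrm{tan}(\xi)$---and keeping the bookkeeping of real versus imaginary parts straight is where essentially all the effort lies.
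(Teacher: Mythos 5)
Your proposal is correct, and it takes a genuinely different route from the paper: the paper proves only the scalar case (directly from Definition~\ref{newdef}) and cites Thorne's review for all of the vector and tensor identities, whereas you give a self-contained derivation built entirely from the paper's own toolkit, namely Definitions~\ref{nablaLdefi} and~\ref{defieth} and Lemma~\ref{defspin_allN}. Your frame identities check out: substituting $\varepsilon^t=-\tfrac{1}{\sqrt2}(\tau_++\tau_-)$ and $\varepsilon^\varphi=\tfrac{i}{\sqrt2}(\tau_+-\tau_-)$ into Definition~\ref{nablaLdefi} indeed yields $\nabla^\ast_\xi=-\tfrac{1}{\sqrt2}\left(\tau_+\eth_0+\tau_-\overline{\eth}_0\right)$ and $L^\ast_\xi=-\tfrac{i}{\sqrt2}\left(\tau_+\eth_0-\tau_-\overline{\eth}_0\right)$, and the vector and mixed normal--tangential tensor cases then follow from \eqref{eqeth1} and \eqref{eqeth2} exactly as you say (the reduction is legitimate because no derivative acts on the inertly tensored $\xi$). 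The crux you flagged at spin weight $2$ also resolves the way you predict. Writing $D:=\sqrt{1-t^2}\,\partial_t-i\left(1-t^2\right)^{-1/2}\partial_\varphi$ for the formal operator acting componentwise, one has $(\nabla^\ast_\xi-iL^\ast_\xi)\otimes f=-\sqrt2\,\tau_+\otimes Df$, and a direct computation with $\partial_t\tau_+=\left(2\left(1-t^2\right)\right)^{-1/2}\xi$ and $\partial_\varphi\tau_+=it\,\tau_++\tfrac{i}{\sqrt2}\sqrt{1-t^2}\,\xi$ gives
\begin{equation*}
D\left(\tau_+ F\right)=\tau_+\,\eth_1 F+\sqrt2\,F\,\xi,
\end{equation*}
so the frame derivative simultaneously produces the spin-connection shift $\eth_0\mapsto\eth_1$ (the extra $t/\sqrt{1-t^2}$ term in Definition~\ref{defieth}) and the radial term. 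With $F=\eth_0 Y_{n,j}$ the radial contribution $2\sqrt2\left(\eth_0Y_{n,j}\right)\tau_+\otimes\xi$ is cancelled exactly by the correction $2\left(\nabla^\ast_\xi-iL^\ast_\xi\right)Y_{n,j}\otimes\xi=-2\sqrt2\left(\eth_0Y_{n,j}\right)\tau_+\otimes\xi$, leaving $2\,\tau_+\otimes\tau_+\,\eth_1\eth_0Y_{n,j}$ as you claimed; the normalization $\sqrt{2n(n+1)(n(n+1)-2)}$ and the prefactor $-\tfrac{1}{\sqrt2}$ then reduce this to ${}_2Y_{n,j}\left(\tau_+\otimes\tau_+\right)$, and the spin-weight $-2$ case is the mirror image via $\overline{\eth}_{-1}\overline{\eth}_0$, where the two minus signs from \eqref{eqeth2} cancel. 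As for what each route buys: the paper's citation avoids the moving-frame bookkeeping altogether by delegating to the general spin-$s$ formalism, while your argument makes the theorem self-contained, explicitly verifies the sign and normalization conventions of Definitions~\ref{vectorharm} and~\ref{tensorharm} against the $\eth$-calculus, and --- since its only inputs are the frame decomposition and Lemma~\ref{defspin_allN} --- extends mechanically to higher tensor ranks, in line with the paper's stated aim.
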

\noindent The relationship for the scalar spherical harmonics follows directly from the definition of the spin-weighted spherical harmonics. For the remainder of the proof, see \cite{thorne}. Note that $\tau_\pm \cdot \overline{\tau_\pm} =1$ and $\tau_\pm \cdot \overline{\tau_\mp} =0$ for all $\xi\in\Omega$. 

\section{Spin-Weighted Slepian Functions}\label{Sec:SPWSlepFct}

Similar to the case for the classical spherical harmonics
presented in \cite{simonspaper}, we solve the concentration problem on
the unit sphere for bandlimited functions of spin weight
$N$. Section~\ref{scaslepderiv} solves the bandlimited optimization
problem for general regions by deriving an equivalent
finite-dimensional eigenvalue problem whose matrix is Hermitian and
positive definite. This eigenvalue problem is equivalent to a homogeneous integral equation of the second kind.\\
For the special case of polar cap regions and spin
weights $N\in \mathbb Z$, we find commuting operators
(Section~\ref{chapcomcap}) with corresponding kernel matrices that
are tridiagonal and which have simple eigenvalues
(Section~\ref{compu}). Because the eigenvectors (but not the
eigenvalues) of the constructed commuting matrices are equal to the
eigenvectors of the original kernel matrix, this reduces the numerical
cost of constructing the Slepian functions for polar caps and spin
weights $N\in \mathbb Z$, and increases the numerical
stability. Slepian functions for general spherical caps can be
constructed from those for a polar cap of equal opening angle through
rotation.

\begin{defi}
If it exists, then the bandlimit $L\in \mathbb{N}_0$ of a function ${}_N F \in L^2(\Omega)$ is the smallest non-negative integer such that ${}_N F \in \mathrm{Harm}^N_{\lvert N \rvert \dots L}(\Omega)$. If no such number exists, then $L=\infty$.
\end{defi}

\subsection{Construction for General Regions}\label{scaslepderiv} 

As a consequence of Theorem~\ref{only} and Remark \ref{Rem:HarmpqN}, every function ${}_NF\in \mathrm L^2(\Omega)$ of spin weight $N\in\mathbb Z$ that is bandlimited by $L$ (or ${}_NF\in \mathrm{Harm}^N_{\vert N\vert \dots L}(\Omega)$, for short) can be uniquely described by a linear combination of the spin-weighted spherical harmonics with spin weight $N$, 
\begin{equation*} {}_NF = \sum_{n=\vert N\vert}^L \sum_{j=-n}^n {}_NF_{n,j} \ {}_NY_{n,j},\end{equation*}
where ${}_NF_{n,j} = \left\langle {}_NF,{}_NY_{n,j}\right\rangle_{\mathrm L^2(\Omega)}$ for the ranges for $n$ and $j$ defined in the sum above. The dimension of $ \mathrm{Harm}^N_{\vert N\vert \dots L}(\Omega)$ is $(L+1)^2-N^2$.
\newline
\newline
To solve the concentration problem in a measurable set $R\subset \Omega$
for bandlimited (by $L$) spin-weighted spherical harmonics with spin
weight $N$, we need to find a function ${}_NF \in
\mathrm{Harm}^N_{\vert N\vert \dots L}(\Omega)$ with a maximum
fraction of its total energy (in the sense of the $\mathrm L^2$-norm) within the region $R$, as is formulated in Problem~\ref{probscar1}.

\begin{prob}[Spin-Weighted Concentration Problem]\label{probscar1}
 For $L \in \mathbb N_0$ and\ a measurable set\ $R\subset \Omega$, find ${}_NF \in
\mathrm{Harm}^N_{\vert N\vert \dots L}(\Omega)$ such that 
\begin{equation}\lambda = \frac{\int_R {}_NF(\xi)\ \overline{{}_NF(\xi)} \ \mathrm d \omega(\xi)}{\int_\Omega {}_NF(\xi)\ \overline{{}_NF(\xi)} \ \mathrm d\omega(\xi)}\label{concprob}\end{equation}
 is maximized. 
\end{prob}

\noindent With
\begin{equation*} \int_R {}_NF(\xi)\ \overline{{}_NF(\xi)} \ \mathrm d\omega(\xi) = \sum_{n=\vert N\vert}^L\sum_{j=-n}^n \sum_{n'=\vert N\vert}^L\sum_{j'=-n'}^{n'} {}_NF_{n,j}\ \overline{{}_NF_{n',j'}} \underbrace{\int_R {}_NY_{n,j}(\xi) \ \overline{{}_NY_{n',j'}(\xi)} \ \mathrm d\omega(\xi)}_{=: \overline{K^N_{nj,n'j'}}=K^N_{n'j',nj}}\end{equation*}
and
\begin{equation*} \int_\Omega {}_NF(\xi)\ \overline{{}_NF(\xi)} \ \mathrm d\omega(\xi) = \sum_{n=\vert N\vert}^L \sum_{j=-n}^n \left\vert {}_NF_{n,j} \right\vert^2,\end{equation*}
we obtain the formulation
\begin{equation*}\lambda = \frac{\overline{G^N}^\mathrm T K^NG^N}{\overline{G^N}^\mathrm T G^N},\end{equation*}
where $G^N:=({}_NF_{00}, \dots ,{}_NF_{LL})^\mathrm T$ and
$K^N:=
\begin{pmatrix}
K^N_{00,00} & \dots & K^N_{00,LL}\\
\vdots & \ddots & \vdots\\
K^N_{LL,00} & \dots & K^N_{LL,LL}
\end{pmatrix}.$
\newline
\newline
Problem~\ref{probscar1} is therefore equivalent to the following problem.
\begin{prob}[Matrix Formulation of the Spin-Weighted Concentration Problem]\label{probscar2}
 Find the eigenvectors $G^N \in \mathbb C^{(L+1)^2-N^2}$ for the eigenvalue problem 
\begin{equation}K^NG^N=\lambda G^N\label{ep_sc}\end{equation}
for which the eigenvalue $\lambda \in \mathbb R$ is maximized.
\end{prob}
\begin{lemma}\label{lemmareal}
The kernel matrix $\overline{K^N}$ is, as a Gramian matrix\ based on linearly independent functions, Hermitian and positive definite. Its eigenvalues are hence real and positive and its eigenvalues form an orthonormal basis. This also holds true for its complex conjugate $K^N$. From equation (\ref{concprob}) follows that $0 \leq \lambda \leq 1$.
\end{lemma}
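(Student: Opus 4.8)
The plan is to recognise $\overline{K^N}$ as the Gramian matrix of the restricted basis functions $\{{}_NY_{n,j}|_R\}$ with respect to the $\mathrm L^2(R)$-inner product and then invoke the standard properties of Gramians. By the definition of the matrix entries, $(\overline{K^N})_{nj,n'j'} = \int_R {}_NY_{n,j}(\xi)\,\overline{{}_NY_{n',j'}(\xi)}\,\mathrm d\omega(\xi) = \langle {}_NY_{n,j}, {}_NY_{n',j'}\rangle_{\mathrm L^2(R)}$, so conjugate symmetry of the inner product, $\overline{\langle f,g\rangle}=\langle g,f\rangle$, makes $\overline{K^N}$ Hermitian. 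For any coefficient vector $c=(c_{nj})$ the associated quadratic form collapses, after reordering the double sum and pulling it inside the integral, to the squared $\mathrm L^2(R)$-norm of a single linear combination, $c^{\ast}\overline{K^N}c = \bigl\| \sum_{n,j} \overline{c_{nj}}\,{}_NY_{n,j}\bigr\|_{\mathrm L^2(R)}^2 \ge 0$, which delivers positive semidefiniteness for free.

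The step requiring genuine content --- and the main (if mild) obstacle --- is upgrading semidefiniteness to strict positive definiteness, i.e.\ verifying that the restrictions ${}_NY_{n,j}|_R$ are linearly independent in $\mathrm L^2(R)$ and not merely in $\mathrm L^2(\Omega)$. Orthonormality on $\Omega$ (Theorem~\ref{orth}) only gives independence on the full sphere; a nontrivial combination could a priori vanish on the proper subset $R$. I would close this gap with the real-analyticity of the spin-weighted spherical harmonics, visible from the closed forms in Theorem~\ref{wigner} and Lemma~\ref{spinino}: any finite linear combination $\sum_{n,j}\overline{c_{nj}}\,{}_NY_{n,j}$ is real-analytic on $\Omega_0$, so the zero set of a nontrivial such combination has surface measure zero. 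Hence, provided $R$ has positive measure (otherwise the concentration problem is vacuous), vanishing a.e.\ on $R$ forces the combination to vanish identically, whence $c=0$ by orthonormality on $\Omega$. Thus the quadratic form is strictly positive for $c\ne 0$ and $\overline{K^N}$ is positive definite.

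Given that $\overline{K^N}$ is Hermitian and positive definite, the spectral theorem supplies real, strictly positive eigenvalues together with an orthonormal eigenbasis; the word ``eigenvalues'' in the second sentence of the statement is a slip for ``eigenvectors''. These conclusions transfer verbatim to $K^N=\overline{\overline{K^N}}$, since entrywise complex conjugation preserves both properties: if $M$ is Hermitian then so is $\overline M$, and for every $x\ne 0$ one has $x^{\ast}\overline{M}\,x=\overline{(\overline x)^{\ast}M\,\overline x}$, which is a positive real. Concretely $K^N$ and $\overline{K^N}$ share eigenvectors with complex-conjugate eigenvalues, but here the eigenvalues are real, so the spectra coincide.

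Finally, the bound $0\le\lambda\le 1$ is immediate from (\ref{concprob}): both integrands equal $\vert{}_NF\vert^2\ge 0$, and since $R\subset\Omega$ the numerator $\int_R\vert{}_NF\vert^2\,\mathrm d\omega$ is at most the denominator $\int_\Omega\vert{}_NF\vert^2\,\mathrm d\omega$, while the denominator is strictly positive for ${}_NF\not\equiv 0$; hence $0\le\lambda\le 1$. Reading $\lambda$ as a Rayleigh quotient of $K^N$, every eigenvalue occurring in Problem~\ref{probscar2} therefore lies in $[0,1]$.
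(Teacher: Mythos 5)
Your proof is correct and follows the same Gramian route that the paper itself sketches inline in the statement of Lemma~\ref{lemmareal} (the paper supplies no separate proof environment for it). The one step the paper takes for granted --- linear independence of the restrictions ${}_NY_{n,j}\vert_R$ in $\mathrm L^2(R)$, which is exactly what upgrades positive semidefiniteness of the Gramian to strict definiteness --- you correctly identify as the only nontrivial content and close it via real-analyticity of the spin-weighted spherical harmonics on the connected set $\Omega_0$; your remaining observations (the conjugated-coefficient form of the quadratic form, conjugation preserving Hermiticity and positive definiteness, the Rayleigh-quotient bound $0\leq\lambda\leq 1$, and flagging ``eigenvalues form an orthonormal basis'' as a slip for ``eigenvectors'') all check out.
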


\begin{theorem}\label{KN}
 The matrix elements of the kernel matrix $K^N$ are given by 
\begin{align*}
K^N_{nj,n'j'}
&=(-1)^{N+j} \sum_{k=\vert n-n'\vert}^{n+n'} \sqrt{\frac{(2n+1)(2n'+1)(2k+1)}{4\pi}} \begin{pmatrix} n& k& n'\\ j& j'-j& -j' \end{pmatrix}\\
&\qquad \times\begin{pmatrix} n& k& n'\\ -N& 0& N \end{pmatrix} \int_R X_{k,j-j'}(t)e^{-i(j-j')\varphi} \ \mathrm d\omega(\xi(t,\varphi))
\end{align*}
for all $N\in\mathbb Z$, all $n,n'\in\mathbb N_0$, $n,n'\geq\vert N\vert$, all $j=-n,\dots, n$, and all $j'=-n',\dots,n'$, where $\begin{pmatrix} j_1& j_2& j\\ m_1& m_2& m \end{pmatrix}$ denotes the Wigner 3j-symbol (see \cite{diss, wigner}).
\end{theorem}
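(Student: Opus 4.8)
The plan is to reduce the region integral defining $K^N_{nj,n'j'}$ to a single family of scalar integrals $\int_R Y_{k,j'-j}\,\mathrm d\omega$ by expanding the pointwise product of two spin-weighted spherical harmonics into scalar ones. First I would use the conjugation symmetry $\overline{{}_NY_{n,j}} = (-1)^{N+j}\,{}_{-N}Y_{n,-j}$, which follows from the Wigner-$D$ representation of Theorem~\ref{wigner} (or from Lemma~\ref{spinino}) together with $\overline{D^n_{j,-N}} = (-1)^{j+N}D^n_{-j,N}$. Recalling from the definitions preceding the statement that $K^N_{nj,n'j'} = \int_R {}_NY_{n',j'}\,\overline{{}_NY_{n,j}}\,\mathrm d\omega$, this turns the integrand into $(-1)^{N+j}\,{}_NY_{n',j'}\,{}_{-N}Y_{n,-j}$. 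The product of a spin-$N$ and a spin-$(-N)$ harmonic has total spin weight $0$, hence is a scalar function, and by the completeness statement of Theorem~\ref{only} (applied with spin weight $0$, so that ${}_0Y_{k,m}=Y_{k,m}$) it expands in the scalar harmonics. Because the only $\varphi$-dependence of the product is $e^{i(j'-j)\varphi}$, only the order $m=j'-j$ survives, so
\[
{}_NY_{n',j'}(\xi)\,{}_{-N}Y_{n,-j}(\xi) = \sum_{k} c_k\,Y_{k,j'-j}(\xi),
\]
and the triangle inequality for the ensuing $3j$-symbols will confine $k$ to the range $|n-n'|\le k\le n+n'$ of the statement.

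The central computation is to evaluate the coefficients $c_k = \int_\Omega {}_NY_{n',j'}\,{}_{-N}Y_{n,-j}\,\overline{Y_{k,j'-j}}\,\mathrm d\omega$, which after another use of the conjugation symmetry becomes a Gaunt-type integral of three spin-weighted spherical harmonics. Here I would invoke (and, if not already available, establish) the master formula
\[
\int_\Omega {}_{s_1}Y_{l_1,m_1}\,{}_{s_2}Y_{l_2,m_2}\,{}_{s_3}Y_{l_3,m_3}\,\mathrm d\omega = \sqrt{\tfrac{(2l_1+1)(2l_2+1)(2l_3+1)}{4\pi}}\begin{pmatrix} l_1 & l_2 & l_3 \\ m_1 & m_2 & m_3 \end{pmatrix}\begin{pmatrix} l_1 & l_2 & l_3 \\ -s_1 & -s_2 & -s_3 \end{pmatrix},
\]
valid when $\sum_i m_i = \sum_i s_i = 0$. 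The cleanest derivation uses Theorem~\ref{wigner} to rewrite each factor as a Wigner $D$-function and then applies the Clebsch--Gordan series for the product of two $D$-functions (equivalently, the known value of the integral of three $D$-functions over $\mathrm{SO}(3)$), with the orthonormality of Theorem~\ref{orth} collapsing the resulting sum. Specializing to $(s_1,s_2,s_3)=(N,-N,0)$ and orders $(j',-j,j-j')$ yields $c_k$ as the product of the two $3j$-symbols times the square-root normalization and a sign.

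Finally I would integrate the pointwise expansion over $R$, giving $K^N_{nj,n'j'} = (-1)^{N+j}\sum_k c_k \int_R Y_{k,j'-j}\,\mathrm d\omega$, and rewrite $Y_{k,j'-j}=X_{k,j'-j}(t)e^{i(j'-j)\varphi}$ as $(-1)^{j-j'}X_{k,j-j'}(t)e^{-i(j-j')\varphi}$ using $X_{k,-m}=(-1)^mX_{k,m}$ from Definition~\ref{spherharm}. It then remains to bring the two $3j$-symbols into the column order and sign pattern displayed in the theorem: cyclic permutation of columns is sign-free, while reversing the signs of all lower entries contributes $(-1)^{n+n'+k}$ to each symbol, so the two such factors cancel. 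The hard part is precisely this sign bookkeeping, as one must track the phases from the two conjugations, from the $X_{k,-m}$ reflection, and from the $3j$ symmetries, and verify that they coalesce into the single prefactor $(-1)^{N+j}$ of the statement; the only substantial analytic input is the Gaunt master formula itself.
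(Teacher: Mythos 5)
Your proposal is correct: I checked the phase bookkeeping, and the factor $(-1)^{j'-j}$ from conjugating $Y_{k,j'-j}$ in the Gaunt coefficient and the factor $(-1)^{j-j'}$ from the reflection $X_{k,j'-j}=(-1)^{j-j'}X_{k,j-j'}$ indeed cancel, while each of the two $3j$-symbols picks up $(-1)^{n+k+n'}$ from the lower-sign reversal (the column reordering being cyclic, hence sign-free), so the product is unchanged and exactly the prefactor $(-1)^{N+j}$ of the statement survives. The paper itself gives no proof but defers to \cite{diss}, where the argument follows the same standard route you outline --- the conjugation symmetry $\overline{{}_NY_{n,j}}=(-1)^{N+j}\,{}_{-N}Y_{n,-j}$ via Theorem~\ref{wigner}, the Wigner-$D$/Clebsch--Gordan triple-product integral for spin-weighted harmonics, and the $3j$ selection rules confining $k$ to $\vert n-n'\vert\leq k\leq n+n'$ --- so your attempt matches the intended proof in both strategy and detail.
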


\noindent The proof to Theorem~\ref{KN} is given in \cite{diss}.
\newline
\newline
\noindent The matrix in Problem~\ref{probscar2} is Hermitian, hence the errors in the eigenvalues are limited by numerical errors arising from calculating the regional integrals in Theorem~\ref{KN} (see \cite{bauerfike}). Numerical experiments show that the matrix $K^N$ has eigenvalues close to $1$ as well as close to $0$. This supports the intention behind the definition of Slepian functions: we find well-concentrated functions and can distinguish them from badly concentrated functions. On the other hand, this distribution of the eigenvalues implies a very high condition number of $K^N$. It is known that such a discrepancy between large and small eigenvalues has the following effect: perturbations of the matrix components (which are relevant in our case, if the integrals over $R$ are calculated numerically) can cause a large relative error for the small eigenvalues, which is certainly also connected to the error for the associated eigenvectors, see e.g.~\cite[Section 5.7]{schwarz}. Moreover, high bandlimits, that is large matrices in the eigenvalue problem, are often associated to high numerical costs. Previous experiments with known commuting operators showed that the alternative eigenvalue problem is notably faster to solve.
\newline
\newline
To construct the commuting matrix for the polar cap regions, we first need to derive yet another equivalent problem to Problems~\ref{probscar1} and~\ref{probscar2}. The following derivation holds for all regions~$R$. 
Upon multiplying~(\ref{ep_sc}) by ${}_NY_{n,j}(\eta)$, $\eta\in\Omega$, and summing over all $n=\vert N\vert, \dots, L$ and $j=-n,\dots,n$, we obtain 
\begin{equation*} \sum_{n=\vert N\vert}^L \sum_{j=-n}^n {}_NY_{n,j}(\eta) \sum_{n'=\vert N\vert}^L \sum_{j'=-n'}^{n'} \int_R \overline{{}_NY_{n,j}(\xi)} \ {}_NY_{n',j'}(\xi) \ \mathrm d\omega(\xi) \ {}_NF_{n',j'} = \lambda \sum_{n=\vert N\vert}^L\sum_{j=-n}^n {}_NY_{n,j}(\eta) \ {}_NF_{n,j}. \end{equation*}
By interchanging summation and integration, this leads to
\begin{equation*} \int_R \sum_{n=\vert N\vert}^L\sum_{j=-n}^n \overline{{}_NY_{n,j}(\xi)}\ {}_NY_{n,j}(\eta) \sum_{n'=\vert N\vert}^L \sum_{j'=-n'}^{n'} {}_NF_{n',j'}\ {}_NY_{n',j'}(\xi) \ \mathrm d\omega(\xi) = \lambda \sum_{n=\vert N\vert}^L\sum_{j=-n}^n {}_NF_{n,j}\ {}_NY_{n,j}(\eta).\end{equation*}

\noindent With these considerations, we can reformulate the eigenvalue problem of Problem \ref{probscar2}.

\begin{prob}\label{probscar3}
The eigenvalue problem (\ref{ep_sc}) is equivalent to a homogeneous integral equation of the second kind with a finite-rank, symmetric, and Hermitian kernel, this means that
\begin{equation*}\int_R \mathcal K^N(\xi,\eta) \ {}_NF(\xi) \ \mathrm d\omega(\xi) = \lambda \ {}_NF(\eta),\end{equation*}
where ${}_NF\in\mathrm{Harm}^N_{\vert N\vert\dots L}(\Omega)$ and
\begin{equation*} \mathcal K^N(\xi,\eta):=\sum_{n=\vert N\vert}^L\sum_{j=-n}^n \overline{{}_NY_{n,j}(\xi)}\ {}_NY_{n,j}(\eta), \qquad \xi,\eta\in\Omega.\end{equation*}
\end{prob}

\subsection{Commuting Operator for Spherical Cap Regions}\label{chapcomcap}

\noindent We now turn our attention to the special case of spherical cap regions $R=\lbrace \xi\in\Omega \ \vert\ \xi\cdot\eta \geq b\rbrace$ with $b=\cos \theta \in (0,1)$ and $\eta\in\Omega$. Due to the rotational symmetry of the sphere, it suffices to consider the case of a polar cap, where $\eta = (0,0,1)^\mathrm T$, as Slepian functions constructed for polar caps can be rotated to generic spherical caps by a linear transformation of the coefficients. For polar cap regions, we can write
\begin{equation*} K^N_{nj,n'j'}=\int_R \overline{{}_N Y_{n,j}(\xi)} \ {}_N Y_{n',j'}(\xi) \ \mathrm d\omega(\xi)=\int_0^{2\pi} \int_b^1 \overline{{}_N Y_{n,j}\left(\xi(t,\varphi)\right)} \ {}_N Y_{n',j'}\left(\xi(t,\varphi)\right) \ \mathrm dt \ \mathrm d\varphi \end{equation*}
for all $N\in\mathbb Z,$ all $n,n'=\vert N\vert,\dots,L$, all $j=-n,\dots,n$, and all $j'=-n',\dots,n'$.
\newline
\begin{theorem}\label{KN_cap}
The kernel matrix for the spherical cap can be calculated by
\begin{align*}
K^N_{nj,n'j'}&= \frac{(-1)^{N+j}}{2} \sqrt{(2n+1)(2n'+1)} \sum_{k=\vert n-n'\vert}^{n+n'} \begin{pmatrix} n& k& n'\\ j& 0& -j \end{pmatrix} \begin{pmatrix} n& k& n'\\ -N& 0& N \end{pmatrix}\\
&\qquad \times \left\lbrack P_{k-1}(b)-P_{k+1}(b)\right\rbrack \delta_{j,j'}
\end{align*}
for all $N\in\mathbb Z$, all $n,n'=\vert N\vert,\dots,L$, all $j=-n,\dots,n$, and all $j'=-n',\dots,n'$.
\end{theorem}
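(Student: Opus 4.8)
The plan is to specialize the general kernel formula of Theorem~\ref{KN} to the polar cap by exploiting the azimuthal symmetry of the region. On the polar cap $R=\{\xi(t,\varphi) : t\in[b,1],\ \varphi\in[0,2\pi)\}$ the surface measure is $\mathrm d\omega(\xi(t,\varphi))=\mathrm dt\,\mathrm d\varphi$, so the region integral appearing in Theorem~\ref{KN} factorizes into a product,
\begin{equation*}
\int_R X_{k,j-j'}(t)\,e^{-i(j-j')\varphi}\,\mathrm d\omega(\xi(t,\varphi)) = \left(\int_b^1 X_{k,j-j'}(t)\,\mathrm dt\right)\left(\int_0^{2\pi} e^{-i(j-j')\varphi}\,\mathrm d\varphi\right).
\end{equation*}
The azimuthal factor equals $2\pi\,\delta_{j,j'}$, which immediately produces the Kronecker delta $\delta_{j,j'}$ in the statement and collapses every term with $j\neq j'$.

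Second, I would evaluate the surviving polar integral. Setting $j'=j$ forces the index $j-j'=0$, and from Definition~\ref{spherharm} one reads off $X_{k,0}(t)=\sqrt{(2k+1)/(4\pi)}\,P_k(t)$. The key classical input is the identity $(2k+1)P_k = P_{k+1}'-P_{k-1}'$, which yields the antiderivative $\int P_k(t)\,\mathrm dt = (P_{k+1}(t)-P_{k-1}(t))/(2k+1)$; evaluating between $b$ and $1$ and using $P_m(1)=1$ (so the endpoint contribution at $t=1$ cancels) gives
\begin{equation*}
\int_b^1 P_k(t)\,\mathrm dt = \frac{P_{k-1}(b)-P_{k+1}(b)}{2k+1}.
\end{equation*}
This is exactly the source of the factor $P_{k-1}(b)-P_{k+1}(b)$ in the claim.

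Finally, I would collect the scalar prefactors. Multiplying the $\sqrt{(2k+1)/(4\pi)}$ coming from $X_{k,0}$, the $\sqrt{(2n+1)(2n'+1)(2k+1)/(4\pi)}$ from Theorem~\ref{KN}, the $2\pi$ from the azimuthal integral, and the $1/(2k+1)$ from the Legendre integral, the two $\sqrt{2k+1}$ factors cancel against $1/(2k+1)$, while $2\pi/(4\pi)$ produces the overall $\tfrac12$, leaving $\tfrac12\sqrt{(2n+1)(2n'+1)}$ in front of the sum --- precisely the stated normalization, with the two Wigner $3j$-symbols and the sign $(-1)^{N+j}$ carried over unchanged from Theorem~\ref{KN}. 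I expect the only genuinely delicate point to be the lowest term $k=0$, which occurs when $n=n'$ (where both $3j$-symbols are nonzero): there $P_{-1}$ is formally undefined, so I would either treat $k=0$ separately via $\int_b^1 P_0(t)\,\mathrm dt = 1-b$, or adopt the convention $P_{-1}\equiv 1$, under which the antiderivative identity and the vanishing of the $t=1$ endpoint both persist, so that the uniform expression $P_{k-1}(b)-P_{k+1}(b)$ remains valid for all $k\geq 0$.
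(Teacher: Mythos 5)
Your derivation is correct: the paper itself gives no in-text proof of Theorem~\ref{KN_cap} (it defers to \cite{diss}), and your specialization of Theorem~\ref{KN} is exactly the natural route --- the azimuthal integral yielding $2\pi\,\delta_{j,j'}$, the identity $X_{k,0}=\sqrt{(2k+1)/(4\pi)}\,P_k$, the Legendre integral $\int_b^1 P_k(t)\,\mathrm dt=\bigl(P_{k-1}(b)-P_{k+1}(b)\bigr)/(2k+1)$, and the constant bookkeeping $2\pi/(4\pi)\cdot(2k+1)/(2k+1)=\tfrac12$ all check out against the stated formula. Your handling of the $k=0$ term (which arises only for $n=n'$, where the $3j$-symbols are nonzero) via the convention $P_{-1}\equiv 1$, under which both the recurrence $P_{k+1}'-P_{k-1}'=(2k+1)P_k$ and the vanishing endpoint at $t=1$ persist, is a genuine and correctly resolved subtlety.
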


\noindent The proof for Theorem~\ref{KN_cap} is given in \cite{diss}.
\newline
\newline
\noindent Next, we define and prove the commuting operator.

\begin{theorem}\label{commute}
For a polar cap with $b=\cos \theta \leq t\leq 1$, the kernel function defined by
\begin{equation*}\mathcal K^N (\xi,\eta):=\sum_{n=\vert N \vert}^L \sum_{j=-n}^n \overline{{}_N Y_{n,j}(\xi)} \ {}_N Y_{n,j}(\eta)\end{equation*}
commutes with the differential operator
\begin{equation*}\mathcal I^N_\xi := \left(b-t_1\right) \Delta^{\ast,N}_\xi + \left(t_1^2-1\right)\partial_{t_1}-L(L+2)t_1\end{equation*}
\noindent for all $N\in\mathbb Z$, where $\Delta^{\ast,N}_\xi$ is defined in Theorem \ref{dglfol}, $L$ is the bandlimit, and $\xi=\xi(t_1,\varphi_1),\eta=\eta(t_2,\varphi_2)\in\Omega$. This means that for any function $u\in\mathrm X^2(\Omega_0)$, we obtain
\begin{align*}
\int_R \overline{\mathcal K^N (\xi,\eta)} \left\lbrack\mathcal I^N_\eta u(\eta)\right\rbrack \ \mathrm d \omega(\eta) &= \int_R \left\lbrack \mathcal I^N_\xi \overline{\mathcal K^N (\xi,\eta)}\right\rbrack u(\eta) \ \mathrm d \omega(\eta)\\
&= \mathcal I^N_\xi \int_R \overline{\mathcal K^N (\xi,\eta)} u(\eta) \ \mathrm d \omega(\eta).
\end{align*}
\end{theorem}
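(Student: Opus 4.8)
The three displayed lines split the claim into two parts. The equality of the second and third lines is only differentiation under the integral sign: $\overline{\mathcal K^N(\xi,\eta)}$ is a finite sum of products of spin-weighted spherical harmonics, smooth on $\Omega_0$, so the operator $\mathcal I^N_\xi$, which acts in $\xi$ alone, may be pulled out of the $\eta$-integral of this finite sum. The real content is the first equality, which states that the integral operator $u\mapsto\int_R\overline{\mathcal K^N(\xi,\eta)}\,u(\eta)\,\mathrm d\omega(\eta)$ commutes with $\mathcal I^N$. The plan is to prove it by showing that $\mathcal I^N$ is (i) formally self-adjoint with no boundary contribution, and (ii) invariant on the bandlimited space $\mathrm{Harm}^N_{|N|\dots L}(\Omega)$.

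First I would reduce to azimuthal modes. Since every element of $\mathrm X^2(\Omega_0)$ has the form $H(t)\,e^{ij\varphi}$ (Definition~\ref{defspacecomm}), and so does each term ${}_NY_{n,j}$ of the kernel, the $\varphi$-integration collapses everything onto a single order $j$, within which $\mathcal I^N$ acts as a radial operator $\mathcal I^N_{[j]}$. The decisive algebraic step is to insert $\Delta^{\ast,N}_\xi=\partial_t\left((1-t^2)\partial_t\right)+\frac{1}{1-t^2}\left(\partial_\varphi^2-N^2+2itN\partial_\varphi\right)$ from Theorem~\ref{dglfol} and to notice that the second-order part $(b-t)\,\partial_t\left((1-t^2)\partial_t\right)$ together with the separate first-order term $(t^2-1)\partial_t$ assemble into a single Sturm--Liouville operator,
\[
\mathcal I^N_{[j]}=\frac{\mathrm d}{\mathrm dt}\left((b-t)(1-t^2)\frac{\mathrm d}{\mathrm dt}\right)-(b-t)\frac{j^2+2Njt+N^2}{1-t^2}-L(L+2)t,
\]
because $\frac{\mathrm d}{\mathrm dt}\bigl[(b-t)(1-t^2)\bigr]=(t^2-1)-2t(b-t)$ is exactly the first-order coefficient created when $(b-t)$ is carried through $\partial_t((1-t^2)\partial_t)$. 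The leading coefficient $p(t)=(b-t)(1-t^2)$ is real and vanishes at $t=b$, $t=1$ and $t=-1$; this is the heart of the construction.

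With $\mathcal I^N_{[j]}$ in self-adjoint form, Lagrange's identity gives $\int(\overline v\,\mathcal I^N_{[j]}w-w\,\mathcal I^N_{[j]}\overline v)\,\mathrm dt=\bigl[p\,(\overline v\,\partial_t w-w\,\partial_t\overline v)\bigr]$ over any $t$-interval, with the bracket vanishing at $t=b$ because $p(b)=0$, and at the poles $t=\pm1$ because $p=\mathcal O(1-t^2)$ while the growth orders of $\mathrm X^2(\Omega_0)$ (Definition~\ref{defspacecomm}; valid for the kernel by Corollary~\ref{folYinspace}, assumed for $u$) keep $\overline v\,\partial_t w-w\,\partial_t\overline v$ bounded, so the bracket is $\mathcal O(1-t^2)$. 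Used on the cap $[b,1]$ this moves $\mathcal I^N$ off $u$ and onto the kernel; used on the full sphere $[-1,1]$, with the orthonormality of Theorem~\ref{orth}, it shows that the matrix $a^{(j)}_{mn}:=\langle\mathcal I^N\,{}_NY_{n,j},{}_NY_{m,j}\rangle_{\mathrm L^2(\Omega)}$ is Hermitian.

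Finally I would verify invariance: using the recursion relations \eqref{rec1} and \eqref{rec3} together with the eigenvalue relation of Theorem~\ref{dglfol}, a direct computation gives the coefficient of ${}_NY_{n+1,j}$ in $\mathcal I^N\,{}_NY_{n,j}$ as $(n-L)(n+L+2)\,\alpha^N_{n+1,j}$, which vanishes precisely at $n=L$; hence $\mathcal I^N$ maps $\mathrm{Harm}^N_{|N|\dots L}(\Omega)$ into itself, and this is exactly where the otherwise inert term $-L(L+2)t$ is needed. Combining the cap-symmetry, the Hermiticity of $a^{(j)}$, and this invariance (so that $\mathcal I^N\int_R\overline{\mathcal K^N}u$ remains bandlimited, with no spurious degree-$(L+1)$ part) establishes the first equality, which, since every $u\in\mathrm X^2(\Omega_0)$ is a single azimuthal mode, is the full claim. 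I expect the main obstacle to be the boundary analysis at the poles --- confirming that the decay orders in Definition~\ref{defspacecomm} are exactly strong enough to kill $\bigl[p\,(\overline v\,\partial_t w-w\,\partial_t\overline v)\bigr]$ at $t=\pm1$ --- together with keeping careful track of the conjugations and the order flip $j\mapsto-j$ in the kernel so the pairing lands on Hermiticity rather than plain symmetry; the exact cancellation of $(n-L)(n+L+2)$ at $n=L$ must likewise be checked rather than estimated.
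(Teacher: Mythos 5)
Your proposal is correct, and while the self-adjointness half coincides in substance with the paper, the core step is handled by a genuinely different argument. The paper (Remark~\ref{proofcommute}) splits the claim into four conditions. Its condition~1 is your cap self-adjointness, proved there via Green's identity for $\Delta^{\ast,N}$ (Theorem~\ref{green}) followed by a product-rule computation in which two boundary integrals at $t=b$ cancel against each other; your repackaging of $\mathcal I^N$ on a fixed azimuthal order as the Sturm--Liouville operator $\frac{\mathrm d}{\mathrm dt}\bigl((b-t)(1-t^2)\frac{\mathrm d}{\mathrm dt}\bigr)+q_j(t)$ with real $q_j$ makes that cancellation automatic, since $p(b)=0$ and $p=\mathcal O(1-t^2)$ at the poles --- same substance, cleaner bookkeeping, and your boundary estimate at $t=\pm 1$ is exactly what Definition~\ref{defspacecomm} and Corollary~\ref{folYinspace} supply. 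Where you genuinely diverge is in the kernel identity $\overline{\mathcal I^N_\eta\,\mathcal K^N(\xi,\eta)}=\mathcal I^N_\xi\,\overline{\mathcal K^N(\xi,\eta)}$ (the paper's conditions~2 and~3): the paper verifies this pointwise by inserting the recursion \eqref{rec1} and the Christoffel--Darboux formula (Theorem~\ref{CDF}) and telescoping via $n(n+2)-L(L+2)+\sum_{k=n+1}^L(2k+1)=0$, whereas you deduce it abstractly from Hermiticity of the per-order matrix $a^{(j)}$ (full-sphere Lagrange identity, where the spin term in Green's identity drops out on equal orders) together with invariance of $\mathrm{Harm}^N_{|N|\dots L}(\Omega)$. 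Two remarks on that. First, your invariance computation --- the coefficient $(n-L)(n+L+2)\,\alpha^N_{n+1,j}$ of ${}_NY_{n+1,j}$, vanishing precisely at $n=L$ --- is exactly the content of the paper's Lemma~\ref{IN}, which the paper proves independently from \eqref{rec1}, \eqref{rec3}, and Theorem~\ref{dglfol}; so the total computational load is comparable, merely relocated. Second, the endomorphism property you invoke is the paper's Corollary~\ref{Cor:I_ist_endomorph}, which the paper derives \emph{from} Theorem~\ref{commute} itself; your direct verification through the recursions is therefore essential to keep your argument non-circular, and you do supply it. What your route buys is the structural explanation --- a symmetric operator that leaves a finite-dimensional space invariant automatically commutes with the reproducing kernel of that space --- which the paper's telescoping identity verifies without illuminating, and which would transfer to other candidate operators; what the paper's route buys is an explicit pointwise identity $(\overline{\mathcal I^N_\xi}-\mathcal I^N_\eta)\,\mathcal K^N=0$ with no appeal to the matrix representation. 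Your treatment of the second displayed equality (pulling $\mathcal I^N_\xi$ out of the integral of a finite smooth sum) matches the paper's condition~4.
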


\begin{remark}\label{proofcommute}
Let $N\in\mathbb Z$ and $\xi,\eta\in\Omega$. To prove Theorem \ref{commute}, we have to show that the following holds:
\begin{enumerate}
\item For two functions $u_1$, $u_2\in \mathrm X^2(\Omega_0)$, the differential operator is self-adjoint, that is
\begin{equation*} \int_R \overline{u_1(\xi)} \left\lbrack \mathcal I^N_\xi u_2(\xi)\right\rbrack \ \mathrm d \omega(\xi) = \int_R \overline{\left\lbrack \mathcal I^N_\xi u_1(\xi)\right\rbrack} u_2(\xi) \ \mathrm d \omega(\xi).\end{equation*}
\item\quad $\overline{\mathcal I^N_\xi} \mathcal K^N(\xi,\eta) = \mathcal I^N_\eta \mathcal K^N(\xi,\eta).$
\item \quad $\overline{\overline{\mathcal I^N_\xi} \mathcal K^N(\xi,\eta)} = \mathcal I^N_\xi \ \overline{\mathcal K^N(\xi,\eta)}.$
\item We can interchange the integration and the differential operator, this means that for any $u\in\mathrm X^2(\Omega_0)$
\begin{equation*}\mathcal I^N_\xi \int_R \overline{\mathcal K^N(\xi,\eta)}u(\eta)\ \mathrm d\omega(\eta) = \int_R \mathcal I^N_\xi \overline{\mathcal K^N(\xi,\eta)}u(\eta) \ \mathrm d\omega(\eta).\end{equation*}
\end{enumerate}
\end{remark}

\noindent Theorem~\ref{commute} follows from Remark~\ref{proofcommute} and Corollary~\ref{folYinspace}, because 
\begin{align*}
\int_R \overline{\mathcal K^N (\xi,\eta)} \left\lbrack\mathcal I^N_\eta u(\eta)\right\rbrack \ \mathrm d \omega(\eta) &\overset{1.}{=} \int_R \overline{\left\lbrack \mathcal I^N_\eta \mathcal K^N (\xi,\eta)\right\rbrack} u(\eta) \ \mathrm d \omega(\eta)\\
&\overset{2.}{=} \int_R \overline{\left\lbrack \overline{\mathcal I^N_\xi} \mathcal K^N (\xi,\eta)\right\rbrack} u(\eta) \ \mathrm d \omega(\eta)\\
&\overset{3.}{=} \int_R \mathcal I^N_\xi \ \overline{\mathcal K^N(\xi,\eta)} u(\eta) \ \mathrm d\omega(\eta)\\
&\overset{4.}{=} \mathcal I^N_\xi \int_R \overline{\mathcal K^N(\xi,\eta)} u(\eta) \ \mathrm d\omega(\eta).
\end{align*}

\noindent To prove Theorem~\ref{commute}, we prove Remark~\ref{proofcommute} in the following. 
\begin{proof}
Let $N\in\mathbb Z$ be a given spin weight.
\begin{enumerate}
\item Let $\xi=\xi(t,\varphi)\in\Omega$ and $u_1,u_2\in \mathrm X^2(\Omega_0)$. Then, the left-hand side of the first condition in Remark~\ref{proofcommute} can be reformulated to 
\begin{align}
\int_R \overline{u_1(\xi)} \left\lbrack \mathcal I^N_\xi u_2(\xi)\right\rbrack \ \mathrm d\omega(\xi)&= \int_R (b-t)\overline{u_1(\xi)} \Delta^{\ast,N}_\xi u_2(\xi) \ \mathrm d\omega(\xi) + \int_R \left(t^2-1\right)\overline{u_1(\xi)} \partial_t u_2(\xi) \ \mathrm d\omega(\xi)\nonumber\\
&\quad -\int_R L(L+2)t \overline{u_1(\xi)} u_2(\xi) \ \mathrm d\omega(\xi). \label{inthelp}
\end{align}
These integrals exist, because $u_1,u_2\in\mathrm
X^2(\Omega_0)$ yields that $u_1$, $u_2$,
$\left(t^2-1\right)\partial_t u_2$, and $\Delta^{\ast,N} u_2$ are
 all bounded on $\Omega$. As a consequence, 
$\mathcal I^N u_2$ is also bounded on $\Omega$. We
hence integrate over products of bounded functions, which
themselves are bounded. The same holds true for the right-hand
side of the first condition from Remark~\ref{proofcommute} with
interchanged roles of $u_1$ and $u_2$. Note that in Theorem
\ref{commute}, we use $\mathcal K^N$ instead of $u_1$. 
Corollary~\ref{folYinspace} states that the spin-weighted
spherical harmonics are in $X^2(\Omega_0)$. Therefore, $\mathcal K^N$
consists of products and sums of bounded functions, which are hence
bounded themselves.
\newline
\newline
The first integral on the right-hand side of Equation~(\ref{inthelp}),
 together with Green's second surface identity for the
spin-weighted Beltrami operator for the spherical cap
(Theorem~\ref{green}) leads us to
\begin{align*}
&\int_R (b-t)\overline{u_1(\xi)} \Delta^{\ast,N}_\xi u_2(\xi) \ \mathrm d\omega(\xi)\\
&= \int_R \overline{\Delta^{\ast,N}_\xi} \left((b-t)\overline{u_1(\xi)}\right) u_2(\xi) \ \mathrm d\omega(\xi)\\
&\quad-\int_0^{2\pi} \left\lbrack \left(1-t^2\right)\left((b-t)\overline{u_1(\xi)}\partial_t u_2(\xi)-u_2(\xi)\partial_t\left((b-t)\overline{u_1(\xi)}\right)\right)\right\rbrack_{t=b} \ \mathrm d\varphi.
\end{align*}
Because
\begin{equation*} \overline{\Delta^{\ast,N}_\xi} = \Delta^\ast_\xi -\frac{N^2+2iNt\partial_\varphi}{1-t^2} \end{equation*}
 (see Theorem \ref{dglfol}) and $b-t$ is independent of $\varphi$, we get, consequently,
\begin{align*}
&\Delta^\ast_\xi \left((b-t)\overline{u_1(\xi)}\right)\\
&= \partial_t\left(\left(1-t^2\right)\partial_t (b-t)\overline{u_1(\xi)}\right)+ (b-t)\frac{1}{1-t^2} \ \partial_\varphi^2 \overline{u_1(\xi)}\\
&= \partial_t\left(\left(1-t^2\right)\left(-\overline{u_1(\xi)}+(b-t)\partial_t\overline{u_1(\xi)}\right)\right)+ (b-t)\frac{1}{1-t^2} \ \partial_\varphi^2 \overline{u_1(\xi)}\\
&= \partial_t \left(\left(t^2-1\right)\overline{u_1(\xi)}\right)-\left(1-t^2\right)\partial_t \overline{u_1(\xi)}+ (b-t)\partial_t\left(\left(1-t^2\right)\partial_t\overline{u_1(\xi)}\right)\\
&\quad + (b-t)\frac{1}{1-t^2}\ \partial_\varphi^2 \overline{u_1(\xi)}\\
&= (b-t)\Delta^\ast_\xi \overline{u_1(\xi)} +\partial_t \left(\left(t^2-1\right)\overline{u_1(\xi)}\right) +\left(t^2-1\right)\partial_t \overline{u_1(\xi)},
\end{align*}
 which yields 
\begin{equation*} \overline{\Delta^{\ast,N}_\xi} \left((b-t)\overline{u_1(\xi)}\right) = (b-t)\overline{\Delta^{\ast,N}_\xi u_1(\xi)} +\partial_t \left(\left(t^2-1\right)\overline{u_1(\xi)}\right)+\left(t^2-1\right)\partial_t \overline{u_1(\xi)}. \end{equation*}
Furthermore, we obtain
\begin{align*}
&\left\lbrack \left(1-t^2\right)\left((b-t)\overline{u_1(\xi)}\partial_t u_2(\xi)-u_2(\xi)\partial_t\left((b-t)\overline{u_1(\xi)}\right)\right)\right\rbrack_{t=b}\\
&=\underbrace{\left\lbrack \left(1-t^2\right)(b-t)\overline{u_1(\xi)}\partial_t u_2(\xi)\right\rbrack_{t=b}}_{=0}-\underbrace{\left\lbrack \left(1-t^2\right)(b-t)u_2(\xi)\partial_t\overline{u_1(\xi)}\right\rbrack_{t=b}}_{=0}\\
&\quad + \left\lbrack \left(1-t^2\right)\overline{u_1(\xi)}u_2(\xi)\right\rbrack_{t=b}\\
&= \left\lbrack \left(1-t^2\right)\overline{u_1(\xi)}u_2(\xi)\right\rbrack_{t=b}.
\end{align*}
Altogether, the first integral on the right-hand side of Equation~(\ref{inthelp}) can be expressed as 
\begin{align*}
&\int_R (b-t)\overline{u_1(\xi)} \Delta^{\ast,N}_\xi u_2(\xi) \ \mathrm d\omega(\xi)\\
&=\int_R (b-t)u_2(\xi)\overline{\Delta^{\ast,N}_\xi u_1(\xi)} \ \mathrm d\omega(\xi) +\int_R u_2(\xi)\partial_t \left(\left(t^2-1\right)\overline{u_1(\xi)}\right)\ \mathrm d\omega(\xi)\\
&\quad +\int_R\left(t^2-1\right)u_2(\xi)\partial_t \overline{u_1(\xi)}\ \mathrm d\omega(\xi) - \int_0^{2\pi} \left\lbrack \left(1-t^2\right)\overline{u_1(\xi)}u_2(\xi)\right\rbrack_{t=b} \ \mathrm d\varphi,
\end{align*}
while integration by parts applied to the second integral on the right-hand side of Equation~(\ref{inthelp}) leads to
\begin{align*}
&\int_R \left(t^2-1\right)\overline{u_1(\xi)} \partial_t u_2(\xi) \ \mathrm d\omega(\xi)\\
&= \int_0^{2\pi} \int_b^1 \left(t^2-1\right)\overline{u_1(\xi)} \partial_t u_2(\xi) \ \mathrm dt \ \mathrm d\varphi \\
&= \int_0^{2\pi} \left\lbrack \left(t^2-1\right)\overline{u_1(\xi)} u_2(\xi)\right\rbrack_{t=b}^{t=1} \ \mathrm d\varphi -\int_0^{2\pi} \int_b^1 \partial_t\left(\left(t^2-1\right)\overline{u_1(\xi)}\right) u_2(\xi) \ \mathrm dt \ \mathrm d\varphi\\
&= \int_0^{2\pi} \left\lbrack \left(1-t^2\right)\overline{u_1(\xi)} u_2(\xi)\right\rbrack_{t=b} \ \mathrm d\varphi -\int_0^{2\pi} \int_b^1 \partial_t\left(\left(t^2-1\right)\overline{u_1(\xi)}\right) u_2(\xi) \ \mathrm dt \ \mathrm d\varphi.
\end{align*}
Altogether, we obtain
\begin{align*}
&\int_R \overline{u_1(\xi)} \left\lbrack \mathcal I^N_\xi u_2(\xi)\right\rbrack \ \mathrm d\omega(\xi)\\
&= \int_R (b-t)\overline{\Delta^{\ast,N}_\xi u_1(\xi)} u_2(\xi) \ \mathrm d\omega(\xi) + \int_R \left(t^2-1\right)\left(\partial_t \overline{u_1(\xi)}\right) u_2(\xi) \ \mathrm d\omega(\xi)\\
&\quad - \int_R L(L+2)t \overline{u_1(\xi)} u_2(\xi) \ \mathrm d\omega(\xi)\\
&= \int_R \overline{\left\lbrack \mathcal I^N_\xi u_1(\xi)\right\rbrack} u_2(\xi) \ \mathrm d\omega(\xi).
\end{align*}

\item To prove the second equation in Remark~\ref{proofcommute} for $\xi=\xi(t_1,\varphi_1)\in \Omega$ and $\eta=\eta(t_2,\varphi_2)\in\Omega$, we invoke Theorems~\ref{dglfol} and~\ref{recspin},
\allowdisplaybreaks
\begin{align*}
&\left(\overline{\mathcal I^N_\xi} -\mathcal I^N_\eta\right) \mathcal K^N(\xi,\eta)\\
&= \left(\left(b-t_1\right)\overline{\Delta^{\ast,N}_\xi} + \left(t_1^2-1\right)\partial_{t_1}-L(L+2)t_1-\left(b-t_2\right)\Delta^{\ast,N}_\eta - \left(t_2^2-1\right)\partial_{t_2} \right.\\
&\quad \Bigl. +L(L+2)t_2\Bigr)\sum_{n=\vert N \vert}^L \sum_{j=-n}^n \overline{{}_N Y_{n,j}(\xi)} \ {}_NY_{n,j}(\eta)\\
&= \left(t_1-t_2\right) \sum_{n=\vert N\vert}^L \sum_{j=-n}^n (n(n+1)-L(L+2)) \overline{{}_N Y_{n,j}(\xi)} \ {}_NY_{n,j}(\eta) \\
&\quad+ \sum_{n=\vert N\vert}^L \sum_{j=-n}^n \left(t_1^2-1\right) \partial_{t_1} \ \overline{{}_N Y_{n,j}(\xi)} \ {}_NY_{n,j}(\eta) - \sum_{n=\vert N\vert}^L \sum_{j=-n}^n \left(t_2^2-1\right) \overline{{}_N Y_{n,j}(\xi)} \partial_{t_2} \ {}_NY_{n,j}(\eta)\\
&\overset{(\ref{rec1})}{=} \left(t_1-t_2\right) \sum_{n=\vert N\vert}^L \sum_{j=-n}^n (n(n+1)-L(L+2)) \overline{{}_N Y_{n,j}(\xi)} \ {}_NY_{n,j}(\eta) \\
&\quad + \sum_{n=\vert N\vert}^L \sum_{j=-n}^n \biggl(\left( nt_1 +\frac{Nj}{n}\right) \overline{{}_N Y_{n,j}(\xi)} \ {}_NY_{n,j}(\eta)-(2n+1)\alpha^N_{n,j} \ \overline{{}_N Y_{n-1,j}(\xi)} \ {}_NY_{n,j}(\eta)\biggr)\\
&\quad - \sum_{n=\vert N\vert}^L \sum_{j=-n}^n \biggl(\left( nt_2 +\frac{Nj}{n}\right) \overline{{}_N Y_{n,j}(\xi)} \ {}_NY_{n,j}(\eta) -(2n+1)\alpha^N_{n,j} \ \overline{{}_N Y_{n,j}(\xi)} \ {}_NY_{n-1,j}(\eta)\biggr)\\
&= \left(t_1-t_2\right) \sum_{n=\vert N\vert}^L \sum_{j=-n}^n (n(n+2)-L(L+2)) \overline{{}_N Y_{n,j}(\xi)} \ {}_NY_{n,j}(\eta) \\
&\quad + \sum_{n=\vert N\vert}^L \sum_{j=-n}^n (2n+1)\alpha^N_{n,j} \left( \overline{{}_N Y_{n,j}(\xi)} \ {}_NY_{n-1,j}(\eta)-\overline{{}_N Y_{n-1,j}(\xi)} \ {}_NY_{n,j}(\eta)\right).
\end{align*}
Note that we used $\overline{\Delta^{\ast,N}_\xi} \ \overline{{}_NY_{n,j}(\xi)} = \overline{\Delta^{\ast,N}_\xi \ {}_NY_{n,j}(\xi)}$.
\newline
\newline
Furthermore, Theorem~\ref{CDF} together with the relationship 
\begin{equation*} \sum_{n=\vert N\vert}^L\sum_{j=-n}^n\sum_{k=n_j}^{n-1} b_{n,j,k} = \sum_{k=\vert N\vert}^{L-1}\sum_{j=-k}^k\sum_{n=k+1}^L b_{n,j,k},\end{equation*}
yields
\begin{align*}
&\left(\overline{\mathcal I^N_\xi} -\mathcal I^N_\eta\right) \mathcal K^N(\xi,\eta)\\
&= \left(t_1-t_2\right) \sum_{n=\vert N\vert}^L \sum_{j=-n}^n (n(n+2)-L(L+2)) \overline{{}_N Y_{n,j}(\xi)} \ {}_NY_{n,j}(\eta) \\
&\quad + \left(t_1-t_2\right) \sum_{n=\vert N\vert}^L \sum_{j=-n}^n (2n+1) \sum_{k=n_j}^{n-1} \overline{{}_N Y_{k,j}(\xi)} \ {}_NY_{k,j}(\eta)\\
&= \left(t_1-t_2\right) \sum_{n=\vert N\vert}^L \sum_{j=-n}^n (n(n+2)-L(L+2)) \overline{{}_N Y_{n,j}(\xi)} \ {}_NY_{n,j}(\eta) \\
&\quad + \left(t_1-t_2\right) \sum_{k=\vert N\vert}^{L-1} \sum_{j=-k}^k \overline{{}_N Y_{k,j}(\xi)} \ {}_NY_{k,j}(\eta) \sum_{n=k+1}^L (2n+1)\\
&= \left(t_1-t_2\right) \sum_{n=\vert N\vert}^{L-1} \sum_{j=-n}^n \overline{{}_N Y_{n,j}(\xi)} \ {}_NY_{n,j}(\eta) \biggl\lbrack n(n+2)-L(L+2)\\
 &\quad + \sum_{k=n+1}^L (2k+1) \biggr\rbrack\\
&=0,
\end{align*}
because $ n(n+2)-L(L+2) + \sum_{k=n+1}^L (2k+1) = 0.$ This is equivalent to the proposition
\begin{equation*} \overline{\mathcal I^N_\xi} \mathcal K^N(\xi,\eta) = \mathcal I^N_\eta \mathcal K^N(\xi,\eta).\end{equation*}

\item To prove the third equality of Remark~\ref{proofcommute}, we use that $\mathcal K^N$ is self-adjoint 
\begin{equation*}\overline{\mathcal K^N(\xi,\eta)}=\mathcal K^N(\eta,\xi).\end{equation*}
Using the definition of $\Delta^{\ast,N}$ given in Theorem~\ref{dglfol}, we obtain
\begin{equation*}\mathcal I^N_\xi = \partial_t\left((b-t)\left(1-t^2\right)\partial_t\right)-\left(\frac{N^2(b-t)}{1-t^2}+L(L+2)t\right) +\frac{b-t}{1-t^2}\left(\partial_\varphi^2+2iNt\partial_\varphi\right).\end{equation*}
We can now utilize Theorem~\ref{wigner} to derive the identity
\begin{align*}
&\mathcal I^N_\xi \ \overline{\mathcal K^N(\xi,\eta)}\\
&= \sum_{n=\vert N\vert}^L\sum_{j=-n}^n \overline{{}_NY_{n,j}(\eta)} \ \mathcal I^N_\xi \ {}_NY_{n,j}(\xi)\\
&=\sum_{n=\vert N\vert}^L\sum_{j=-n}^n \overline{{}_NY_{n,j}(\eta)}\biggl( \partial_t\left((b-t)\left(1-t^2\right)\partial_t\ {}_NY_{n,j}(\xi)\right)\\
&\quad -\left(\frac{N^2(b-t)}{1-t^2}+L(L+2)t\right)\ {}_NY_{n,j}(\xi)+\frac{b-t}{1-t^2}\left(-j^2-2jNt\right)\ {}_NY_{n,j}(\xi)\biggr).
\end{align*}
\noindent Analogously, the left-hand side of the third condition in Remark~\ref{proofcommute} yields 
\begin{align*}
&\overline{\mathcal I^N_\xi} \mathcal K^N(\xi,\eta)\\
&=\sum_{n=\vert N\vert}^L\sum_{j=-n}^n {}_NY_{n,j}(\eta) \ \overline{\mathcal I^N_\xi} \ \overline{{}_NY_{n,j}(\xi)}\\
&= \sum_{n=\vert N\vert}^L\sum_{j=-n}^n {}_NY_{n,j}(\eta)\biggl( \partial_t\left((b-t)\left(1-t^2\right)\partial_t\ \overline{{}_NY_{n,j}(\xi)}\right)\\
&\quad -\left(\frac{N^2(b-t)}{1-t^2}+L(L+2)t\right)\ \overline{{}_NY_{n,j}(\xi)}+\frac{b-t}{1-t^2}\left(-j^2-2jNt\right)\ \overline{{}_NY_{n,j}(\xi)}\biggr).
\end{align*}
As a consequence, we get
\begin{align*}
&\overline{\overline{\mathcal I^N_\xi} \mathcal K^N(\xi,\eta)}\\
&=\sum_{n=\vert N\vert}^L\sum_{j=-n}^n \overline{{}_NY_{n,j}(\eta)}\biggl( \partial_t\left((b-t)\left(1-t^2\right)\partial_t\ {}_NY_{n,j}(\xi)\right)\\
&\quad -\left(\frac{N^2(b-t)}{1-t^2}+L(L+2)t\right)\ {}_NY_{n,j}(\xi)+\frac{b-t}{1-t^2}\left(-j^2-2jNt\right)\ {}_NY_{n,j}(\xi)\biggr).
\end{align*}
Hence, the left- and the right-hand side of the third condition in Remark \ref{proofcommute} are equal.

\item Since the functions occurring in the fourth condition of Remark \ref{proofcommute} are sufficiently smooth, it is easy to verify that the conditions for interchanging differentiation and integration are satisfied.
\end{enumerate}
\end{proof}

\begin{theorem}\label{commutesphere}
The commuting relation also holds true for an integral over the unit sphere. For $\xi\in\Omega$,
\begin{align*}
\int_\Omega \overline{\mathcal K^N (\xi,\eta)} \left\lbrack\mathcal I^N_\eta u(\eta)\right\rbrack \ \mathrm d \omega(\eta) &= \int_\Omega \left\lbrack \mathcal I^N_\xi \overline{\mathcal K^N (\xi,\eta)}\right\rbrack u(\eta) \ \mathrm d \omega(\eta)\\
&= \mathcal I^N_\xi \int_\Omega \overline{\mathcal K^N (\xi,\eta)} u(\eta) \ \mathrm d \omega(\eta).
\end{align*}
\end{theorem}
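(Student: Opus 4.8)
The plan is to follow the proof of Theorem~\ref{commute} verbatim, replacing the polar cap $R$ by the full sphere $\Omega$ throughout. Recall that Theorem~\ref{commute} was not proved directly but deduced from the four conditions collected in Remark~\ref{proofcommute} together with Corollary~\ref{folYinspace}, via the displayed four-step chain $\overset{1.}{=}\cdots\overset{4.}{=}$ preceding its proof. That chain is purely formal: once the four conditions hold with $R$ replaced by $\Omega$, the identical manipulation yields the three identities over $\Omega$. It therefore suffices to re-examine the four conditions of Remark~\ref{proofcommute} and check which of them are sensitive to the integration domain.

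Conditions~2 and~3 are pointwise identities for the kernel $\mathcal K^N$ and the operator $\mathcal I^N$; they make no reference whatsoever to the region of integration and hence continue to hold unchanged. Condition~4, the interchange of $\mathcal I^N_\xi$ with the integral, is again routine over $\Omega$: by Corollary~\ref{folYinspace} the kernel $\mathcal K^N$ is a finite sum of products of spin-weighted spherical harmonics, all lying in $\mathrm X^2(\Omega_0)$ and bounded on the compact set $\Omega$, so differentiation under the integral sign is justified exactly as in the cap case. The only condition that genuinely requires rework is Condition~1, the self-adjointness of $\mathcal I^N$, now in the form
\[
\int_\Omega \overline{u_1(\xi)}\,\bigl[\mathcal I^N_\xi u_2(\xi)\bigr]\,\mathrm d\omega(\xi) = \int_\Omega \overline{\bigl[\mathcal I^N_\xi u_1(\xi)\bigr]}\,u_2(\xi)\,\mathrm d\omega(\xi)
\]
for $u_1,u_2\in\mathrm X^2(\Omega_0)$.

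To establish this I would split $\int_\Omega \overline{u_1}\,\mathcal I^N_\xi u_2\,\mathrm d\omega$ into the same three integrals as in~\eqref{inthelp}, apply Green's second surface identity (Theorem~\ref{green}) to the $\Delta^{\ast,N}$ term, integrate by parts in $t$ on the $(t^2-1)\partial_t$ term, and invoke the purely algebraic product-rule identity for $\overline{\Delta^{\ast,N}_\xi}\bigl((b-t)\overline{u_1}\bigr)$, which is region-independent and hence carries over unchanged. Two simplifications occur relative to the cap proof. First, Green's identity over the \emph{entire} sphere carries no boundary contribution (the special case of Theorem~\ref{green}), so the boundary term at $t=b$ that appeared in the cap proof is now simply absent rather than having to cancel against its counterpart. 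Second, the integration by parts over the full range $t\in[-1,1]$ produces boundary terms only at the poles $t=\pm1$.

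The delicate point — and the main obstacle — is that these pole boundary terms must vanish. Each carries the prefactor $(1-t^2)$, which is zero at the poles, while the $\mathrm X^2(\Omega_0)$ decay estimates of Definition~\ref{defspacecomm}, namely $\partial_t^{\,m}F=\mathcal O\bigl((1-t^2)^{1/2-m}\bigr)$ as $t\to\pm1$, prevent the remaining factors from growing fast enough to compensate: for instance $(1-t^2)\,\overline{u_1}\,u_2=\mathcal O\bigl((1-t^2)^{2}\bigr)\to 0$ and $(1-t^2)(b-t)\,\overline{u_1}\,\partial_t u_2=\mathcal O\bigl((1-t^2)\bigr)\to 0$. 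With the pole contributions gone, the interior term $\int_\Omega u_2\,\partial_t\bigl((t^2-1)\overline{u_1}\bigr)\,\mathrm d\omega$ cancels exactly as in the cap proof, and one is left precisely with $\int_\Omega \overline{[\mathcal I^N_\xi u_1]}\,u_2\,\mathrm d\omega$, so Condition~1 holds over $\Omega$. Feeding the four conditions into the four-step chain of Theorem~\ref{commute} then delivers the three claimed identities over the unit sphere.
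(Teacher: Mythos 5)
Your proposal is correct and follows essentially the same route as the paper, whose entire proof is the one-sentence observation that nothing in the argument for Theorem~\ref{commute} constrained $R$ in a way that would exclude $R=\Omega$. You simply make that remark explicit — noting that conditions~2--4 of Remark~\ref{proofcommute} are region-independent and re-verifying the self-adjointness condition over the full sphere, where the $t=b$ boundary terms are replaced by pole terms that vanish by the $\mathrm X^2(\Omega_0)$ decay estimates — which is a more careful rendering of the same argument, not a different one.
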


\begin{proof}
Note that we did not have any constraints on $R$ which would have excluded the case $R=\Omega$.
\end{proof}

\begin{fol}\label{Cor:I_ist_endomorph}
The operator $\mathcal{I}^N$ is an endomorphism on every $\mathrm{Harm}^N_{|N|\dots L}(\Omega)$, that is $\mathcal{I}^N Y\in\mathrm{Harm}^N_{|N|\dots L}(\Omega)$ for every $Y\in\mathrm{Harm}^N_{|N|\dots L}(\Omega)$.
\end{fol}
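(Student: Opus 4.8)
The plan is to read the corollary off the commuting relation over the whole sphere (Theorem~\ref{commutesphere}) by exploiting that the kernel $\mathcal K^N$ reproduces functions in $\mathrm{Harm}^N_{|N|\dots L}(\Omega)$. First I would record the reproducing/projection property: since $\overline{\mathcal K^N(\xi,\eta)}=\sum_{n=|N|}^L\sum_{j=-n}^n {}_NY_{n,j}(\xi)\,\overline{{}_NY_{n,j}(\eta)}$, the orthonormality from Theorem~\ref{orth} gives, for any $g\in\mathrm L^2(\Omega)$,
\[
\int_\Omega \overline{\mathcal K^N(\xi,\eta)}\,g(\eta)\,\mathrm d\omega(\eta)=\sum_{n=|N|}^L\sum_{j=-n}^n \langle g,{}_NY_{n,j}\rangle_{\mathrm L^2(\Omega)}\,{}_NY_{n,j}(\xi).
\]
This shows that the integral operator $\mathcal P^N$ with kernel $\overline{\mathcal K^N}$ always lands in $\mathrm{Harm}^N_{|N|\dots L}(\Omega)$ and fixes every element of that space, i.e.\ $\mathcal P^N$ is the orthogonal projector onto $\mathrm{Harm}^N_{|N|\dots L}(\Omega)$.

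Next, fix $Y\in\mathrm{Harm}^N_{|N|\dots L}(\Omega)$. By Corollary~\ref{folYinspace}, $Y$ is a finite linear combination of functions in $\mathrm X^2(\Omega_0)$ and hence lies in $\mathrm X^2(\Omega_0)$, so Theorem~\ref{commutesphere} applies with $u=Y$. Using $\mathcal P^N Y=Y$ and then moving $\mathcal I^N$ through the integral,
\[
\mathcal I^N_\xi Y(\xi)=\mathcal I^N_\xi\int_\Omega\overline{\mathcal K^N(\xi,\eta)}\,Y(\eta)\,\mathrm d\omega(\eta)=\int_\Omega\overline{\mathcal K^N(\xi,\eta)}\,\bigl[\mathcal I^N_\eta Y(\eta)\bigr]\,\mathrm d\omega(\eta)=\mathcal P^N\bigl(\mathcal I^N Y\bigr).
\]
By the projection property of $\mathcal P^N$, the right-hand side lies in $\mathrm{Harm}^N_{|N|\dots L}(\Omega)$, which is exactly the claim.

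The one point that needs care --- the main (and only modest) obstacle --- is that the final equality reads $\mathcal I^N Y=\mathcal P^N(\mathcal I^N Y)$ only once we know $\mathcal I^N Y\in\mathrm L^2(\Omega)$, so that the projection formula above may be applied with $g=\mathcal I^N Y$. This is supplied by the boundedness already established in the proof of Theorem~\ref{commute} (first condition of Remark~\ref{proofcommute}): for $u\in\mathrm X^2(\Omega_0)$ the functions $\Delta^{\ast,N}u$ and $(t^2-1)\partial_t u$ are bounded on $\Omega$, hence $\mathcal I^N u$ is bounded, and boundedness on the finite-measure space $\Omega$ yields $\mathcal I^N u\in\mathrm L^2(\Omega)$.

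Alternatively, and as an independent check, I would verify the statement by direct computation using the recursion relations of Theorem~\ref{recspin}: combining $\Delta^{\ast,N}{}_NY_{n,j}=-n(n+1){}_NY_{n,j}$ with \eqref{rec1} and \eqref{rec3} expresses $\mathcal I^N{}_NY_{n,j}$ as a linear combination of ${}_NY_{n-1,j}$, ${}_NY_{n,j}$, and ${}_NY_{n+1,j}$ in which the coefficient of ${}_NY_{n+1,j}$ is proportional to $\bigl(n(n+2)-L(L+2)\bigr)\alpha^N_{n+1,j}$. This vanishes precisely at $n=L$, while the ${}_NY_{n-1,j}$ contribution vanishes at $n=|N|$ because $\alpha^N_{|N|,j}=0$. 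This confirms that $\mathcal I^N$ never leaves the band $|N|,\dots,L$ and makes transparent why the term $L(L+2)t_1$ appears in the definition of $\mathcal I^N$.
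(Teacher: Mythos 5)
Your proposal is correct and coincides with the paper's own proof: both establish that the integral operator with kernel $\overline{\mathcal K^N}$ is the orthogonal projector onto $\mathrm{Harm}^N_{|N|\dots L}(\Omega)$, write $Y$ via the reproducing property, and invoke Theorem~\ref{commutesphere} to move $\mathcal I^N_\xi$ inside the integral, exhibiting $\mathcal I^N Y$ as an element of the projector's range. Your additional touches --- verifying $\mathcal I^N Y\in\mathrm L^2(\Omega)$, and the tridiagonal cross-check via Theorem~\ref{recspin}, which is precisely the content of Lemma~\ref{IN} --- only reinforce the same route, though note that a finite linear combination of functions $H(t)e^{ij\varphi}$ with different $j$ is not literally an element of $\mathrm X^2(\Omega_0)$ under Definition~\ref{defspacecomm}, so Theorem~\ref{commutesphere} should strictly be applied to each fixed-order component of $Y$ and summed by linearity (an imprecision the paper's proof shares).
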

\begin{proof}
Clearly, $\mathcal{K}^N$ from Problem \ref{probscar3} is the reproducing kernel of $\mathrm{Harm}^N_{|N|\dots L}(\Omega)$ in the sense that
\begin{equation*}
 \int_\Omega Y(\eta) \overline{\mathcal{K}^N(\xi,\eta)}\ \mathrm{d}\omega(\eta) = Y(\xi) 
\end{equation*}
for all $\xi\in\Omega$ and all $Y\in\mathrm{Harm}^N_{|N|\dots L}(\Omega)$. More generally, we have automatically
\begin{equation*}
 \int_\Omega Y(\eta) \overline{\mathcal{K}^N(\xi,\eta)}\ \mathrm{d}\omega(\eta) = \mathcal{P}_{\mathrm{Harm}^N_{|N|\dots L}(\Omega)}Y(\xi)
\end{equation*}
for all $\xi\in\Omega$ and all $Y\in\mathrm{L}^2(\Omega)$, where $\mathcal{P}_{\mathrm{Harm}^N_{|N|\dots L}(\Omega)}$ is the orthogonal projection onto $\mathrm{Harm}^N_{|N|\dots L}(\Omega)$.
Hence, if now $Y\in\mathrm{Harm}^N_{|N|\dots L}(\Omega)$ is arbitrary, then we obtain together with Theorem \ref{commutesphere}
\begin{equation*}
\mathcal{I}_\xi^N Y(\xi) = \mathcal{I}_\xi^N \int_\Omega Y(\eta) 
\overline{\mathcal{K}^N(\xi,\eta)}\ \mathrm{d}\omega(\eta) 
= \int_\Omega \left[\mathcal{I}_\eta^N Y(\eta)\right] \overline{\mathcal{K}^N(\xi,\eta)}\ \mathrm{d}\omega(\eta).
\end{equation*}
Thus, $\mathcal{I}^N Y\in\mathrm{Harm}^N_{|N|\dots L}(\Omega)$.
\end{proof}

\subsection{Computation of Slepian Functions for Spherical Cap Regions}\label{compu}

 For polar cap regions, we can use the commuting operator
$\mathcal I^N$ of Section~\ref{chapcomcap} to construct a commuting
Matrix $I^N$, which, as we will show, has the same eigenvectors as
$K^N$. By solving for the eigenvectors of $I^N$ instead of $K^N$, we
obtain the same spin-weighted Slepian functions but with an increased
numerical stability and at a lower computational cost. See also
\cite{simonspaper} for the scalar case.
\newline
\newline
Previously, we showed that $\mathcal I^N_\xi$ and
$\mathcal K^N$ commute for all $N\in\mathbb Z$. Next, we need to
show that they have the same eigenfunctions $\mathcal
G^N_\alpha$, meaning that for $\xi \in\Omega$ and $N\in\mathbb Z$
\begin{align*}
\mathcal I^N_\xi \mathcal G^N_\alpha(\xi)&= \chi_\alpha \mathcal G^N_\alpha(\xi),\\
\int_R \overline{\mathcal K^N(\xi,\eta)} \mathcal G^N_\alpha(\eta) \ \mathrm d\omega(\eta) &= \lambda_{\alpha} \mathcal G^N_\alpha(\xi),
\end{align*}
where $\lambda_{\alpha}$ and $\chi_\alpha$ are not necessarily equal. 

\begin{remark} 
The matrix problem equivalent to $\mathcal I^N_\xi \mathcal G^N_\alpha(\xi)= \chi_\alpha \mathcal G^N_\alpha(\xi)$ is $I^N G^N_\alpha = \chi_\alpha G^N_\alpha$, 
 where
\begin{equation*}I^N_{nj,n'j'} := \int_\Omega \overline{{}_N Y_{n,j}(\xi)}\left( \mathcal I^N_\xi \ {}_NY_{n',j'}(\xi)\right)\ \mathrm d\omega(\xi),\end{equation*}
\begin{equation*}I^N:=
\begin{pmatrix}
I^N_{\vert N\vert,-\vert N\vert,\vert N\vert,-\vert N\vert} & \dots & I^N_{\vert N\vert,-\vert N\vert,LL}\\
\vdots & \ddots & \vdots\\
I^N_{LL,\vert N\vert,-\vert N\vert} & \dots & I^N_{LL,LL}
\end{pmatrix},\end{equation*}
and $G_\alpha^N = \begin{pmatrix} (G^N_{\lvert N \rvert,-\lvert N
 \rvert})_\alpha,\ldots,(G^N_{L,L})_\alpha \end{pmatrix}^T$, with $
(G^N_{n,j})_\alpha = \left\langle \mathcal G^N_\alpha,{}_NY_{n,j}\right\rangle_{\mathrm L^2(\Omega)}$.
\end{remark}
\begin{proof} The eigenvalue problem
\begin{equation*} \mathcal I^N_\xi \mathcal G^N_\alpha(\xi)= \chi_\alpha \mathcal G^N_\alpha(\xi)\end{equation*}
 is equivalent to 
\begin{equation*} \sum_{n'=\vert N\vert}^L \sum_{j'=-n'}^{n'} \left(G^N_{n',j'}\right)_\alpha \mathcal I^N_\xi \ {}_NY_{n',j'}(\xi)= \chi_\alpha \sum_{n'=\vert N\vert}^L \sum_{j'=-n'}^{n'} \left(G^N_{n',j'}\right)_\alpha \ {}_NY_{n',j'}(\xi). \end{equation*}
Upon multiplying by $\overline{{}_N Y_{n,j}(\xi)}$, $n=\vert N\vert,\dots,L$, $j=-n,\dots,n$, we obtain
\begin{equation*}\sum_{n'=\vert N\vert}^L \sum_{j'=-n'}^{n'} \left(G^N_{n',j'}\right)_\alpha \ \overline{{}_N Y_{n,j}(\xi)}\left( \mathcal I^N_\xi \ {}_NY_{n',j'}(\xi)\right)= \chi_\alpha \sum_{n'=\vert N\vert}^L \sum_{j'=-n'}^{n'} \left(G^N_{n',j'}\right)_\alpha \ \overline{{}_N Y_{n,j}(\xi)} \ {}_NY_{n',j'}(\xi).\end{equation*}
Integration over the unit sphere $\Omega$ and interchanging of sum and
integral, together with Theorem \ref{orth} yields 
\begin{align*}
\sum_{n'=\vert N\vert}^L \sum_{j'=-n'}^{n'} \left(G^N_{n',j'}\right)_\alpha \int_\Omega \overline{{}_N Y_{n,j}(\xi)}&\left( \mathcal I^N_\xi \ {}_NY_{n',j'}(\xi)\right)\ \mathrm d\omega(\xi)\\
= \chi_\alpha \sum_{n'=\vert N\vert}^L \sum_{j'=-n'}^{n'} \left(G^N_{n',j'}\right)_\alpha &\underbrace{\int_\Omega\overline{{}_N Y_{n,j}(\xi)} \ {}_NY_{n',j'}(\xi) \ \mathrm d\omega(\xi)}_{=\delta_{n,n'}\delta_{j,j'}}
\end{align*}
 and hence 
\begin{equation}\label{Eq:orthogR_vm} \sum_{n'=\vert N\vert}^L \sum_{j'=-n'}^{n'} \left(G^N_{n',j'}\right)_\alpha I^N_{nj,n'j'} = \chi_\alpha \left(G^N_{n,j}\right)_\alpha \end{equation}
for all $n=\vert N\vert,\dots,L$ and all $j=-n,\dots,n$.\\
Vice versa, if \eqref{Eq:orthogR_vm} holds true, then the linearity of $\mathcal{I}^N$ yields
\begin{align*}
 \chi_\alpha \mathcal{G}_\alpha^N(\xi) & = \sum_{n=|N|}^L \sum_{j=-n}^n\chi_\alpha \left(G_{n,j}^N\right)_\alpha {}_NY_{n,j}(\xi) \\
 & =\sum_{n=|N|}^L \sum_{j=-n}^n \sum_{n'=\vert N\vert}^L \sum_{j'=-n'}^{n'} \left(G^N_{n',j'}\right)_\alpha I^N_{nj,n'j'}\,{}_NY_{n,j}(\xi) \\
 & =\sum_{n=|N|}^L \sum_{j=-n}^n \sum_{n'=\vert N\vert}^L \sum_{j'=-n'}^{n'}
 \left(G^N_{n',j'}\right)_\alpha \int_\Omega \overline{{}_NY_{n,j}(\eta)} \left(\mathcal{I}_\eta^N {}_NY_{n',j'}(\eta)\right)\ \mathrm{d}\omega(\eta)\ {}_NY_{n,j}(\xi) \\
 & =\sum_{n=|N|}^L \sum_{j=-n}^n \int_\Omega \overline{{}_NY_{n,j}(\eta)} \left[\mathcal{I}_\eta^N \sum_{n'=\vert N\vert}^L \sum_{j'=-n'}^{n'} \left(G^N_{n',j'}\right)_\alpha {}_NY_{n',j'}(\eta)\right]\ \mathrm{d}\omega(\eta)\ {}_NY_{n,j}(\xi).
\end{align*}
Hence, Theorem \ref{only}, Remark \ref{Rem:HarmpqN}, and Corollary \ref{Cor:I_ist_endomorph} imply
\begin{equation*}
 \chi_\alpha \mathcal{G}_\alpha^N(\xi) = \sum_{n=|N|}^L \sum_{j=-n}^n
 \left\langle \mathcal{I}^N \mathcal{G}_\alpha^N,{}_N Y_{n,j} \right\rangle_{\mathrm{L}^2(\Omega)} {}_NY_{n,j}(\xi) = \mathcal{I}_\xi^N \mathcal{G}_\alpha^N(\xi),
\end{equation*}
which completes the proof.
\end{proof}

\begin{theorem}\label{commmat}
$K^N$ and $I^N$ also commute for all $N\in\mathbb Z$, that is
\begin{equation*}K^NI^N=I^NK^N.\end{equation*}
\end{theorem}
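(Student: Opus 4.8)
The plan is to recognize both $K^N$ and $I^N$ as matrix representations, with respect to the orthonormal basis $\left\lbrace {}_NY_{n,j}\right\rbrace_{|N|\leq n\leq L,\,-n\leq j\leq n}$ of the finite-dimensional space $V:=\mathrm{Harm}^N_{|N|\dots L}(\Omega)$, of two linear endomorphisms of $V$, and then to reduce the asserted matrix identity to the operator commutation already furnished by Theorem~\ref{commute}.

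First I would introduce the integral operator $\mathcal A^N v:=\int_R \overline{\mathcal K^N(\cdot,\eta)}\,v(\eta)\,\mathrm d\omega(\eta)$ and the restriction $\mathcal B^N:=\mathcal I^N|_V$, the latter being a well-defined endomorphism of $V$ by Corollary~\ref{Cor:I_ist_endomorph}. Expanding $\overline{\mathcal K^N(\xi,\eta)}=\sum_{m,\mu}{}_NY_{m,\mu}(\xi)\,\overline{{}_NY_{m,\mu}(\eta)}$ and using orthonormality (Theorem~\ref{orth}) gives $\langle \mathcal A^N\,{}_NY_{n',j'},{}_NY_{n,j}\rangle_{\mathrm L^2(\Omega)}=K^N_{nj,n'j'}$, so that $K^N$ is exactly the matrix of $\mathcal A^N$; the definition of $I^N_{nj,n'j'}$ shows directly that $I^N$ is the matrix of $\mathcal B^N$. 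Since, in a fixed basis, the product of two matrices represents the composition of the corresponding endomorphisms, $K^NI^N$ and $I^NK^N$ are the matrices of $\mathcal A^N\mathcal B^N$ and $\mathcal B^N\mathcal A^N$; hence it would suffice to prove $\mathcal A^N\mathcal B^N=\mathcal B^N\mathcal A^N$ on $V$.

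The core step is then an immediate transcription of Theorem~\ref{commute}. For a basis function $u={}_NY_{n',j'}\in\mathrm X^2(\Omega_0)$ (Corollary~\ref{folYinspace}), that theorem yields
\begin{equation*}
\mathcal B^N\mathcal A^N u = \mathcal I^N_\xi\int_R\overline{\mathcal K^N(\xi,\eta)}\,u(\eta)\,\mathrm d\omega(\eta) = \int_R\overline{\mathcal K^N(\xi,\eta)}\,\bigl[\mathcal I^N_\eta u(\eta)\bigr]\,\mathrm d\omega(\eta) = \mathcal A^N\mathcal B^N u,
\end{equation*}
where I use that $\mathcal A^N v=\int_R\overline{\mathcal K^N(\cdot,\eta)}v(\eta)\,\mathrm d\omega(\eta)$ for every $v$, applied once with $v=u$ and once with $v=\mathcal I^N u\in V$ (the latter membership being exactly Corollary~\ref{Cor:I_ist_endomorph}). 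By linearity of $\mathcal A^N$, $\mathcal B^N$, and $\mathcal I^N$, this extends from the basis functions to all of $V$, giving $\mathcal A^N\mathcal B^N=\mathcal B^N\mathcal A^N$ and therefore $K^NI^N=I^NK^N$.

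The main obstacle is not an estimate but careful bookkeeping in the identification step: one must track the complex conjugations and the row/column convention so that $K^N$ and $I^N$ genuinely represent $\mathcal A^N$ and $\mathcal B^N$ rather than their adjoints or transposes, and one must handle the minor domain technicality that a general element of $V$ is a sum over several azimuthal orders $j$ and so is not itself of the single-mode form $H(t)e^{ij\varphi}$ required in Definition~\ref{defspacecomm}. This is precisely why Theorem~\ref{commute} is invoked modewise on each ${}_NY_{n',j'}$ and then extended by linearity. Once these points are settled, the matrix commutation follows directly from the operator commutation of Theorem~\ref{commute}.
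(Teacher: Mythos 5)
Your proposal is correct, and it reaches the result by a more structural route than the paper. The paper's proof works entrywise: it expands $(K^NI^N)_{nj,lm}$ and $(I^NK^N)_{nj,lm}$, uses completeness to collapse the inner sums into the kernel $\mathcal K^N$, and then shows both double integrals equal the common value $\int_R \overline{{}_NY_{n,j}(\xi)}\,\bigl(\mathcal I^N_\xi\,{}_NY_{l,m}(\xi)\bigr)\,\mathrm d\omega(\xi)$ --- invoking Theorem~\ref{commutesphere} together with the reproducing property of $\mathcal K^N$ for the product $K^NI^N$, and Theorem~\ref{commute} together with the reproducing property for $I^NK^N$. You instead identify $K^N$ and $I^N$ once and for all as the matrices of the endomorphisms $\mathcal A^N$ and $\mathcal B^N=\mathcal I^N|_V$ of $V=\mathrm{Harm}^N_{|N|\dots L}(\Omega)$ in the orthonormal basis (your conjugation and row/column bookkeeping checks out against the paper's conventions $K^N_{nj,n'j'}=\int_R\overline{{}_NY_{n,j}}\,{}_NY_{n',j'}\,\mathrm d\omega$ and $I^N_{nj,n'j'}=\langle\mathcal I^N{}_NY_{n',j'},{}_NY_{n,j}\rangle_{\mathrm L^2(\Omega)}$), and then reduce matrix commutation to a single application of Theorem~\ref{commute} per basis function, with the sphere-level content delegated to Corollary~\ref{Cor:I_ist_endomorph} (whose proof in the paper is where Theorem~\ref{commutesphere} actually enters, so the logical dependencies coincide). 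What your version buys is brevity and conceptual transparency --- no double-integral manipulations or explicit uses of the reproducing property are needed, since they are packaged into the statement that $\mathcal A^N$ maps into $V$ and that $\mathcal I^N$ is an endomorphism of $V$; what the paper's version buys is self-containedness at the level of indices, making every interchange of sum and integral visible. Your explicit handling of the domain technicality --- that a general element of $V$ is not of the single-mode form $H(t)e^{ij\varphi}$ demanded by Definition~\ref{defspacecomm}, so Theorem~\ref{commute} must be applied modewise to each ${}_NY_{n',j'}$ (Corollary~\ref{folYinspace}) and extended by linearity --- is a point the paper leaves implicit, and is a small but genuine improvement in rigor.
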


\begin{proof}
Let $N\in\mathbb Z$, $n,l=\vert N\vert,\dots,L$, $j=-n,\dots,n$, and
$m=-l,\dots,l$. Then the left-hand side together with Theorem \ref{commutesphere}, Theorem \ref{orth}, and Corollary \ref{folYinspace} leads to 
\begin{align*}
&\left(K^NI^N\right)_{nj,lm}\\
&= \sum_{n'=\vert N\vert}^L \sum_{j'=-n}^n K^N_{nj,n'j'} I^N_{n'j',lm}\\
&=\int_R \overline{{}_N Y_{n,j}(\xi)} \int_\Omega \overline{\mathcal{K}^N(\xi,\eta)} \left(\mathcal{I}_\eta^N {}_N Y_{l,m}(\eta)\right) \ \mathrm{d}\omega(\eta) \ \mathrm{d}\omega(\xi)\\
&= \int_R \overline{{}_N Y_{n,j}(\xi)} \,\mathcal{I}_\xi^N \left[ \int_\Omega \overline{\mathcal{K}^N(\xi,\eta)} {}_NY_{l,m}(\eta)\ \mathrm{d}\omega(\eta) \right] \ \mathrm{d}\omega(\xi) \\
&=\int_R \overline{{}_N Y_{n,j}(\xi)} \left(\mathcal I^N_\xi \ {}_NY_{l,m}(\xi)\right) \ \mathrm d\omega(\xi).
\end{align*}
\noindent The right-hand side in combination with Theorem~\ref{commute} and Corollary \ref{folYinspace} yields the same result
\begin{align*}
&\left(I^NK^N\right)_{nj,lm}\\
&= \sum_{n'=\vert N\vert}^L \sum_{j'=-n}^n I^N_{nj,n'j'} K^N_{n'j',lm}\\
&=\int_\Omega\overline{{}_N Y_{n,j}(\xi)} \int_R \left(\mathcal{I}_\xi^N \overline{\mathcal{K}^N(\xi,\eta)}\right) {}_NY_{l,m}(\eta)\ \mathrm{d}\omega(\eta)\ \mathrm{d}\omega(\xi)
\\
&=\int_\Omega \overline{{}_N Y_{n,j}(\xi)} \int_R \overline{\mathcal{K}^N(\xi,\eta)} \left(\mathcal{I}_\eta^N {}_NY_{l,m}(\eta)\right) \ \mathrm{d}\omega(\eta)\ \mathrm{d}\omega(\xi)
\\
&=\int_R \overline{{}_N Y_{n,j}(\eta)} \left(\mathcal I^N_\eta \ {}_NY_{l,m}(\eta)\right) \ \mathrm d\omega(\eta).
\end{align*}
Note that we used here again that $\mathcal{K}^N$ is the reproducing kernel of $\mathrm{Harm}^N_{|N|\dots L}(\Omega)$ (see also the proof of Corollary \ref{Cor:I_ist_endomorph}).
\end{proof}

\noindent Note that because $I^N,K^N \in \mathbb
R^{\lbrack(L+1)^2-N^2\rbrack \times \lbrack(L+1)^2-N^2\rbrack}$ (as a result of $n=\vert N\vert,\dots,L$, $j=-n,\dots,n$), we
obtain $(L+1)^2-N^2$ orthogonal eigenvectors $G^N_\alpha$, orthogonal
eigenfunctions $\mathcal G^N_\alpha$, and eigenvalues
$\lambda_\alpha$, where $\alpha=1,\dots,(L+1)^2-N^2$.

\begin{lemma}\label{IN}
The components of $I^N$ have the following form:
\begin{align*}
I^N_{nj,nj}&=-\left\lbrack n(n+1)b+Nj\left(1-\frac{L(L+2)+1}{n(n+1)}\right) \right\rbrack,\\
I^N_{nj,n+1,j}&= \left \lbrack (n+1)^2-1-L(L+2)\right\rbrack \alpha^N_{n+1,j}\\
&= \left\lbrack n(n+2)-L(L+2)\right\rbrack \frac{\sqrt{(n+1-N)(n+1+N)}}{n+1} \ \sqrt{\frac{(n+1-j)(n+1+j)}{(2n+1)(2n+3)}},\\
I^N_{n+1,j,nj}&= \left \lbrack n(n+2)-L(L+2)\right\rbrack \alpha^N_{n+1,j} = I^N_{nj,n+1,j},\\
I^N_{nj,n'j'}&= 0, \qquad \text{else}
\end{align*}
for all $N\in\mathbb Z$, all $n,n'=\vert N\vert,\dots,L$, all
$j=-n,\dots,n$, and all $j'=-n',\dots,n$. Therefore, $I^N$ is a symmetric tridiagonal matrix.
\end{lemma}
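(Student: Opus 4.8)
The plan is to exploit the orthonormality of the spin-weighted spherical harmonics (Theorem~\ref{orth}): since $I^N_{nj,n'j'}=\langle {}_NY_{n,j},\mathcal I^N_\xi\,{}_NY_{n',j'}\rangle_{\mathrm L^2(\Omega)}$, it suffices to apply $\mathcal I^N_\xi$ to a single basis function and expand the result back into the basis; the matrix element $I^N_{nj,n'j'}$ is then just the coefficient of ${}_NY_{n,j}$ in that expansion. First I would observe that each of the three pieces of $\mathcal I^N_\xi=(b-t)\Delta^{\ast,N}_\xi+(t^2-1)\partial_t-L(L+2)t$ preserves the azimuthal dependence $e^{ij'\varphi}$: the operator $\Delta^{\ast,N}_\xi$ does by Theorem~\ref{dglfol}, while $(t^2-1)\partial_t$ and multiplication by $t$ act only on the $t$-variable. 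Consequently $\mathcal I^N_\xi\,{}_NY_{n',j'}$ again has azimuthal order $j'$, so its inner product with ${}_NY_{n,j}$ vanishes unless $j=j'$; this produces the factor $\delta_{j,j'}$ and reduces everything to a one-dimensional (in $n$) computation.

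Next I would compute $\mathcal I^N_\xi\,{}_NY_{n',j'}$ explicitly. Theorem~\ref{dglfol} turns the first term into $-n'(n'+1)(b-t)\,{}_NY_{n',j'}$, and recursion~(\ref{rec1}) rewrites $(t^2-1)\partial_t\,{}_NY_{n',j'}$ as $\bigl(n't+\tfrac{Nj'}{n'}\bigr){}_NY_{n',j'}-(2n'+1)\alpha^N_{n',j'}\,{}_NY_{n'-1,j'}$. Collecting the terms proportional to $t\,{}_NY_{n',j'}$ gives the coefficient $n'(n'+2)-L(L+2)$, so that
\begin{align*}
\mathcal I^N_\xi\,{}_NY_{n',j'}
&= -n'(n'+1)b\,{}_NY_{n',j'}+\frac{Nj'}{n'}\,{}_NY_{n',j'}\\
&\quad +\bigl(n'(n'+2)-L(L+2)\bigr)t\,{}_NY_{n',j'}-(2n'+1)\alpha^N_{n',j'}\,{}_NY_{n'-1,j'}.
\end{align*}
The remaining $t\,{}_NY_{n',j'}$ term I would eliminate with the three-term recursion~(\ref{rec3}), which expresses it as $\alpha^N_{n',j'}{}_NY_{n'-1,j'}+\alpha^N_{n'+1,j'}{}_NY_{n'+1,j'}-\tfrac{Nj'}{n'(n'+1)}{}_NY_{n',j'}$. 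After this substitution $\mathcal I^N_\xi\,{}_NY_{n',j'}$ is a genuine linear combination of ${}_NY_{n'-1,j'}$, ${}_NY_{n',j'}$, and ${}_NY_{n'+1,j'}$ only, which is precisely the tridiagonal structure.

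Finally I would read off the three nonzero coefficients. The coefficient of ${}_NY_{n'+1,j'}$ is $\bigl(n'(n'+2)-L(L+2)\bigr)\alpha^N_{n'+1,j'}$, giving the super-diagonal, while the coefficient of ${}_NY_{n'-1,j'}$ combines $-(2n'+1)\alpha^N_{n',j'}$ with $\bigl(n'(n'+2)-L(L+2)\bigr)\alpha^N_{n',j'}$ to give $\bigl((n')^2-1-L(L+2)\bigr)\alpha^N_{n',j'}$; after the index shift $n'\mapsto n+1$ these two agree, which establishes the symmetry $I^N_{nj,n+1,j}=I^N_{n+1,j,nj}$. The diagonal entry requires the only nontrivial algebra: collecting the $Nj'$ contributions from~(\ref{rec1}) and~(\ref{rec3}) into $Nj'\bigl(\tfrac1{n'}-\tfrac{n'(n'+2)-L(L+2)}{n'(n'+1)}\bigr)$ and simplifying the bracket to $\tfrac{L(L+2)+1-n'(n'+1)}{n'(n'+1)}$ reproduces $-\bigl[n'(n'+1)b+Nj'\bigl(1-\tfrac{L(L+2)+1}{n'(n'+1)}\bigr)\bigr]$. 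I expect this last simplification to be the main (and essentially only) source of difficulty; the boundary case $n'=\lvert N\rvert$ is harmless because then $\alpha^N_{\lvert N\rvert,j'}=0$ and ${}_NY_{\lvert N\rvert-1,j'}=0$, so the sub-diagonal term simply drops out and the claimed tridiagonal form persists.
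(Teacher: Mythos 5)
Your proposal is correct and follows essentially the same route as the paper: apply $\mathcal I^N_\xi$ to a single basis function, expand the result via Theorem~\ref{dglfol} and the recursions (\ref{rec1}) and (\ref{rec3}) into a combination of ${}_NY_{n-1,j}$, ${}_NY_{n,j}$, and ${}_NY_{n+1,j}$, and read off the matrix entries by orthonormality (Theorem~\ref{orth}). Your intermediate expansion of $\mathcal I^N_\xi\,{}_NY_{n,j}$ agrees exactly with the one the paper states without derivation (deferring it to \cite{diss}), so you have in effect supplied the ``easy but slightly lengthy'' computation the paper omits, including the correct handling of the boundary case $n=\vert N\vert$.
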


\begin{proof}
 Using Theorems~\ref{dglfol} and~\ref{recspin}, we can express
 $\mathcal I^N_\xi \ {}_NY_{n,j}(\xi)$ as the following linear
 combination of the $ {}_NY_{n,j}(\xi)$ for all $\xi\in\Omega$. The derivation is easy but slightly lengthy and is, therefore, omitted here. For a step-by-step proof, see \cite{diss}.
\begin{align*}
&\mathcal I^N_\xi \ {}_NY_{n,j}(\xi)\\
&=-\left\lbrack n(n+1)b+\frac{Nj}{n(n+1)}(n(n+1)-1-L(L+2))\right\rbrack \ {}_NY_{n,j}(\xi)\\
&\qquad + \left\lbrack n^2-1-L(L+2)\right\rbrack \alpha^N_{n,j}
 \ {}_NY_{n-1,j}(\xi) \\
&\qquad + \left\lbrack n(n+2)-L(L+2)\right\rbrack \alpha^N_{n+1,j} \ {}_NY_{n+1,j}(\xi).
\end{align*}
Lemma~\ref{IN} follows from the orthonormality of the spin-weighted spherical harmonics (Theorem~\ref{orth}). For example,
\begin{align*}
I_{nj,n+1,j}^N&=\int_\Omega \overline{{}_N Y_{n,j}(\xi)} \left(
\mathcal{I}_\xi^N {}_N Y_{n+1,j}(\xi)\right) \ \mathrm{d}\omega(\xi)\\
 &= \left[(n+1)^2 -1 -L(L+2)\right]\alpha_{n+1,j}^N\int_\Omega \overline{{}_N Y_{n,j}(\xi)}{}_N Y_{n+1-1,j}(\xi)\ \mathrm{d}\omega(\xi). \qedhere
\end{align*}
\end{proof}

\begin{fol}\label{simple}
For each $N\in\mathbb Z$, the commuting matrix $I^N$ is a block-diagonal matrix where each block is a symmetric tridiagonal matrix with nonzero off-diagonal elements. As a consequence, each block has a simple spectrum and the eigenvectors of $I^N$ and $K^N$ coincide.
\end{fol}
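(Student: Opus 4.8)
The plan is to extract all the needed structure of $I^N$ from the explicit entries in Lemma~\ref{IN} and then feed the simplicity of its spectrum into the commutation relation of Theorem~\ref{commmat}. First I would observe that Lemma~\ref{IN} gives $I^N_{nj,n'j'}=0$ whenever $j\neq j'$, so $I^N$ splits into independent blocks indexed by the order $j\in\{-L,\dots,L\}$; in the block for a fixed $j$ the degree $n$ ranges over $\max\{|N|,|j|\},\dots,L$. Within such a block the only nonzero entries are the diagonal $I^N_{nj,nj}$ and the nearest neighbours $I^N_{nj,n\pm 1,j}$, and the relation $I^N_{nj,n+1,j}=I^N_{n+1,j,nj}$ recorded in Lemma~\ref{IN} makes each block a symmetric tridiagonal matrix.

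Next I would verify that every off-diagonal entry of every block is nonzero, which is what turns ``tridiagonal'' into ``unreduced''. Using the factored form $I^N_{nj,n+1,j}=[n(n+2)-L(L+2)]\,\alpha^N_{n+1,j}$, I would treat the two factors separately: the bracket $n(n+2)-L(L+2)$ vanishes only for $n=L$, whereas an off-diagonal entry inside a block requires $n+1\leq L$, hence $n\leq L-1$ and the bracket is strictly negative; and $\alpha^N_{n+1,j}$ contains the factors $\sqrt{(n+1)^2-N^2}$ and $\sqrt{(n+1)^2-j^2}$, both strictly positive since $n+1>\max\{|N|,|j|\}$ throughout the block.

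With each block an unreduced symmetric tridiagonal matrix, I would invoke the classical fact that its spectrum is simple: for any candidate eigenvalue $\lambda$, deleting the first row and last column of the block $T-\lambda I$ leaves a triangular matrix whose diagonal consists precisely of the (nonzero) off-diagonal entries of $T$, so $\operatorname{rank}(T-\lambda I)\geq m-1$; thus every eigenvalue has geometric multiplicity one, and symmetry upgrades this to algebraic multiplicity one. This proves the simplicity claim block by block.

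It remains to transfer the eigenvectors from $I^N$ to $K^N$, and here the main obstacle is not a computation but the need to align two block decompositions: simplicity holds only inside each $j$-block, so the argument would collapse if $K^N$ mixed the blocks of $I^N$. Fortunately the factor $\delta_{j,j'}$ in Theorem~\ref{KN_cap} shows that $K^N$ is block-diagonal with exactly the same $j$-blocks. Since $K^N I^N=I^N K^N$ (Theorem~\ref{commmat}) and both matrices respect this decomposition, the commutation holds block by block; within a block, if $I^N v=\chi v$ with $\chi$ simple then $I^N(K^N v)=K^N(I^N v)=\chi(K^N v)$ forces $K^N v\in\operatorname{span}\{v\}$, so $v$ is an eigenvector of $K^N$ as well. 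Collecting these common eigenvectors over all blocks shows that the eigenvectors of $I^N$ and $K^N$ coincide.
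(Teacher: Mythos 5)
Your proof is correct and follows essentially the same route as the paper's, which compresses the argument into three citations: block-tridiagonal structure from Lemma~\ref{IN}, simple spectrum of each unreduced symmetric tridiagonal block from standard linear algebra, and eigenvector equivalence from commutation (Theorem~\ref{commmat}) together with the simple spectrum of the individual blocks. You merely supply the details the paper delegates — notably the explicit check that $\left\lbrack n(n+2)-L(L+2)\right\rbrack \alpha^N_{n+1,j}\neq 0$ for $n<L$ inside a block, and the observation via the $\delta_{j,j'}$ in Theorem~\ref{KN_cap} that $K^N$ respects the same $j$-blocks, which is indeed needed because distinct blocks of $I^N$ can share eigenvalues (e.g.\ the $\pm j$ blocks when $N=0$), so simplicity only holds blockwise.
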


\begin{proof}
The block-tridiagonal structure is an immediate consequence of Lemma
\ref{IN}. The simple spectrum follows from basic linear
algebra (see \cite{LA_tridiag}), as does the equivalence of the
eigenvectors from commutation and simple spectrum of one of the
commuting matrix blocks. 
\end{proof}

 \subsection{Properties of the Spin-Weighted Slepian Functions} 

\noindent The spatially concentrated, spectrally limited spin-weighted Slepian
functions for spin weight~$N$, region~$R$, and bandlimit~$L$ are
\begin{equation}\label{spinslep}
\mathcal G^N_\alpha(\xi) = \sum_{n=\lvert N \rvert}^L \sum_{j=-n}^n
\left(G^N_{n,j}\right)_\alpha \ {}_NY_{n,j}(\xi),
\end{equation}
for $\alpha=1,\ldots, (L+1)^2-N^2$ and $\xi \in \Omega$.
 Without loss of generality, we order these Slepian functions
such that their eigenvalues are in descending sequence 
 $1\geq \lambda_1\geq \lambda_2\geq \dots\geq
\lambda_{(L+1)^2-N^2} \geq 0$.
\newline
\newline
The proofs of the following theorems are analogous to the spin-weight-free case ($N=0$) see \cite{dahlen2, diss, simonsdoublecap, simonspaper}.

\begin{theorem}\label{propscar}
The Slepian functions and their corresponding coefficient vectors are orthonormal on the unit sphere and orthogonal within the region of interest $R$
\begin{align}
\sum_{n=\vert N\vert}^L \sum_{j=-n}^n \left(G^N_{n,j}\right)_\alpha \ \overline{\left(G^N_{n,j}\right)_\beta} &= \delta_{\alpha,\beta},\label{g_on}\\
\sum_{n=\vert N\vert}^L \sum_{j=-n}^n \sum_{n'=\vert N\vert}^L \sum_{j'=-n'}^{n'}\left(G^N_{n,j}\right)_\alpha \ \overline{K^N_{nj,n'j'}}\ \overline{\left(G^N_{n',j'}\right)_\beta} &= \lambda_\alpha \delta_{\alpha,\beta},\label{g_og}\\
\left\langle \mathcal G^N_\alpha, \mathcal G^N_\beta\right\rangle_{\mathrm L^2(\Omega)} &= \delta_{\alpha,\beta},\label{gfunc_on}\\
\left\langle \mathcal G^N_\alpha, \mathcal G^N_\beta\right\rangle_{\mathrm L^2(R)} &= \lambda_\alpha \delta_{\alpha,\beta}\label{gfunc_og}
\end{align}
for all $\alpha,\beta=1,\dots,(L+1)^2-N^2$.
\end{theorem}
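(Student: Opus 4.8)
The plan is to observe that the four identities split into two genuinely distinct computations. Relations~(\ref{g_on}) and~(\ref{g_og}) are statements about the coefficient vectors $G^N_\alpha$, whereas (\ref{gfunc_on}) and~(\ref{gfunc_og}) are their $\mathrm L^2$ counterparts; the two layers are linked through the orthonormality of the spin-weighted spherical harmonics (Theorem~\ref{orth}) and through the defining relation $\int_R {}_NY_{n,j}\,\overline{{}_NY_{n',j'}}\,\mathrm d\omega = \overline{K^N_{nj,n'j'}}$. Consequently (\ref{gfunc_on}) will reduce directly to (\ref{g_on}), and (\ref{gfunc_og}) will turn out to be literally the left-hand side of (\ref{g_og}); so only (\ref{g_on}) and (\ref{g_og}) carry real content.

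First I would establish (\ref{g_on}). The vectors $G^N_\alpha$ are by construction the eigenvectors of the eigenvalue problem (\ref{ep_sc}), and Lemma~\ref{lemmareal} guarantees that $K^N$ is Hermitian, so it admits an orthonormal eigenbasis; Corollary~\ref{simple} further shows that these eigenvectors agree with those of the real symmetric, block-diagonal, simple-spectrum matrix $I^N$, so they are (up to phase) uniquely determined and automatically orthogonal. Normalizing them yields $\sum_{n,j}(G^N_{n,j})_\alpha\,\overline{(G^N_{n,j})_\beta}=\delta_{\alpha\beta}$, which is (\ref{g_on}). To get (\ref{gfunc_on}) I would insert the expansion (\ref{spinslep}) into $\langle \mathcal G^N_\alpha,\mathcal G^N_\beta\rangle_{\mathrm L^2(\Omega)}$, interchange summation and integration, and apply Theorem~\ref{orth}, so that $\langle{}_NY_{n,j},{}_NY_{n',j'}\rangle_{\mathrm L^2(\Omega)}=\delta_{n,n'}\delta_{j,j'}$ collapses the double sum to $\sum_{n,j}(G^N_{n,j})_\alpha\,\overline{(G^N_{n,j})_\beta}$, which equals $\delta_{\alpha\beta}$ by (\ref{g_on}).

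For (\ref{g_og}) and (\ref{gfunc_og}) I would again insert (\ref{spinslep}), this time into $\langle\mathcal G^N_\alpha,\mathcal G^N_\beta\rangle_{\mathrm L^2(R)}$; the surviving integrals $\int_R{}_NY_{n,j}\,\overline{{}_NY_{n',j'}}\,\mathrm d\omega=\overline{K^N_{nj,n'j'}}$ show that this regional inner product is exactly the left-hand side of (\ref{g_og}), so both identities are proved together. To evaluate the expression I would take its complex conjugate, use the eigenrelation $K^NG^N_\beta=\lambda_\beta G^N_\beta$ to extract $\lambda_\beta$, and then invoke (\ref{g_on}) in conjugated form to obtain $\lambda_\beta\delta_{\alpha\beta}$; since $\lambda_\beta\in\mathbb R$ by Lemma~\ref{lemmareal} and the factor $\delta_{\alpha\beta}$ forces $\lambda_\beta=\lambda_\alpha$, conjugating back gives $\lambda_\alpha\delta_{\alpha\beta}$.

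The computations are otherwise routine expansions; the only point that needs care is the bookkeeping of complex conjugates, because the Hermitian Gramian in Lemma~\ref{lemmareal} is $\overline{K^N}$, the regional overlap integral is $\overline{K^N_{nj,n'j'}}$, and the eigenvalue equation (\ref{ep_sc}) is written for $K^N$ rather than $\overline{K^N}$. Lining these three conjugation conventions up---by conjugating the whole identity (\ref{g_og}) once and then using the reality of $\lambda_\alpha$ to conjugate back---is the step I would be most careful to get right.
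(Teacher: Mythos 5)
Your proof is correct and follows exactly the standard argument that the paper itself does not spell out but defers to the spin-weight-free literature (``analogous to the case $N=0$''): choose an orthonormal eigenbasis of the Hermitian matrix $K^N$ to get (\ref{g_on}), expand $\mathcal G^N_\alpha$ in the orthonormal ${}_NY_{n,j}$ (Theorem~\ref{orth}) to reduce (\ref{gfunc_on}) to (\ref{g_on}) and to identify $\langle\mathcal G^N_\alpha,\mathcal G^N_\beta\rangle_{\mathrm L^2(R)}$ with the left-hand side of (\ref{g_og}), then apply the eigenrelation and the reality of $\lambda_\alpha$ --- your conjugation bookkeeping with $\overline{K^N_{nj,n'j'}}=K^N_{n'j',nj}$ is exactly right. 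One minor caution: your appeal to Corollary~\ref{simple} for ``automatic'' orthogonality is only available for spherical caps, whereas Theorem~\ref{propscar} holds for general regions $R$, so for degenerate eigenvalues of $K^N$ one must \emph{choose} an orthonormal basis within each eigenspace rather than merely normalize --- but your primary argument (Hermitian matrix admits an orthonormal eigenbasis) already covers this.
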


\begin{theorem}\label{propscar2}
 Each construction of spin-weighted Slepian functions
$\lbrace \mathcal
G^N_\alpha\rbrace_{\alpha=1,\dots,(L+1)^2-N^2}$ for any
region $R$ forms a complete orthonormal basis system of
$\mathrm{Harm}^N_{\vert N\vert \dots L}(\Omega)$. Therefore, any
 ${}_NF\in\mathrm{Harm}^N_{\vert N\vert\dots L}(\Omega)$ 
can be expressed both in the basis of the spin-weighted
spherical harmonics and in the basis of the spin-weighted Slepian
functions
\begin{equation*} {}_NF(\xi)= \sum_{n=\vert N\vert}^L \sum_{j=-n}^n
 \underbrace{\left\langle {}_NF,{}_NY_{n,j}\right\rangle_{\mathrm
 L^2(\Omega)}}_{=:{}_NF_{n,j}} \ {}_NY_{n,j}(\xi)
 =\sum_{\alpha=1}^{(L+1)^2-N^2} \underbrace{\left\langle
 {}_NF,\mathcal G^N_\alpha\right\rangle_{\mathrm
 L^2(\Omega)}}_{=:{}_NF_\alpha} \mathcal
 G^N_\alpha(\xi),\end{equation*}
for $\xi\in\Omega$.
\end{theorem}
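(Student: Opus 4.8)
The plan is to derive everything from the orthonormality already established in Theorem~\ref{propscar} together with a dimension count; in a finite-dimensional setting this reduces the claim to elementary linear algebra. First I would observe that, by the defining formula~\eqref{spinslep}, each Slepian function $\mathcal G^N_\alpha$ is by construction a finite linear combination of the spin-weighted spherical harmonics ${}_NY_{n,j}$ with $n=\lvert N\rvert,\dots,L$ and $j=-n,\dots,n$. Hence $\mathcal G^N_\alpha\in\mathrm{Harm}^N_{\lvert N\rvert\dots L}(\Omega)$ for every $\alpha$, so the entire constructed system lies inside the target space.

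Next I would invoke~\eqref{gfunc_on}, which states $\langle\mathcal G^N_\alpha,\mathcal G^N_\beta\rangle_{\mathrm L^2(\Omega)}=\delta_{\alpha,\beta}$. An orthonormal family is automatically linearly independent, so the $(L+1)^2-N^2$ functions $\mathcal G^N_\alpha$ are linearly independent. Since $\dim\mathrm{Harm}^N_{\lvert N\rvert\dots L}(\Omega)=(L+1)^2-N^2$ (as recorded at the start of Section~\ref{scaslepderiv}, a consequence of Theorem~\ref{only} and Remark~\ref{Rem:HarmpqN}), a linearly independent family of exactly that cardinality necessarily spans the space. Therefore $\{\mathcal G^N_\alpha\}_{\alpha=1,\dots,(L+1)^2-N^2}$ is a complete orthonormal basis of $\mathrm{Harm}^N_{\lvert N\rvert\dots L}(\Omega)$, which is the first assertion.

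For the explicit expansion I would argue as follows. Any ${}_NF\in\mathrm{Harm}^N_{\lvert N\rvert\dots L}(\Omega)$ is, by Remark~\ref{Rem:HarmpqN}, a finite linear combination of the ${}_NY_{n,j}$ with $\lvert N\rvert\leq n\leq L$; the coefficients are recovered by pairing against the orthonormal system (Theorem~\ref{orth}, or Theorem~\ref{only}), giving ${}_NF_{n,j}=\langle{}_NF,{}_NY_{n,j}\rangle_{\mathrm L^2(\Omega)}$ and thus the first displayed representation. The Slepian representation is then the standard expansion of a vector in a finite-dimensional orthonormal basis: writing ${}_NF=\sum_\beta c_\beta\,\mathcal G^N_\beta$ and pairing with $\mathcal G^N_\alpha$ forces, via~\eqref{gfunc_on}, the coefficient $c_\alpha=\langle{}_NF,\mathcal G^N_\alpha\rangle_{\mathrm L^2(\Omega)}$, which is precisely ${}_NF_\alpha$.

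I do not expect any genuine obstacle; once Theorem~\ref{propscar} is available, the result is essentially linear algebra. The single point that deserves explicit care is the matching of the cardinality $(L+1)^2-N^2$ of the Slepian system with the dimension of $\mathrm{Harm}^N_{\lvert N\rvert\dots L}(\Omega)$, since it is exactly this coincidence that upgrades orthonormality to \emph{completeness} (spanning) rather than merely yielding an orthonormal subset. I would therefore cite that dimension count explicitly rather than re-derive it.
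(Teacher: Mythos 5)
Your proposal is correct and follows essentially the same route as the paper, which omits the proof and refers to the spin-weight-free case ($N=0$) in the cited literature: there, completeness is obtained exactly as you argue, from the orthonormality of the $(L+1)^2-N^2$ eigenfunctions (Theorem~\ref{propscar}, itself resting on the Hermiticity of $K^N$ in Lemma~\ref{lemmareal}) combined with the dimension count $\dim\mathrm{Harm}^N_{\vert N\vert\dots L}(\Omega)=(L+1)^2-N^2$ recorded at the start of Section~\ref{scaslepderiv}. Your explicit emphasis on the cardinality--dimension match as the step that upgrades orthonormality to completeness is precisely the right point of care, and the coefficient identification via pairing against \eqref{gfunc_on} is the standard finite-dimensional argument intended there.
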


\begin{theorem}\label{propscar3}
 The spin-weighted spherical harmonics for degrees $\vert
N\vert$ to $L$ can be expressed in the basis of the
spin-weighted Slepian functions
\begin{equation} {}_NY_{n,j}= \sum_{\alpha=1}^{(L+1)^2-N^2} \overline{\left(G^N_{n,j}\right)_\alpha} \mathcal G^N_\alpha, \label{basistrans} \end{equation}
where
\begin{equation} \sum_{\alpha=1}^{(L+1)^2-N^2} \left(G^N_{n,j}\right)_\alpha \overline{\left(G^N_{n',j'}\right)_\alpha}= \delta_{n,n'}\delta_{j,j'}\label{g_on_basistrans}.\end{equation}
\end{theorem}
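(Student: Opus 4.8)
The plan is to recognize both assertions of Theorem~\ref{propscar3} as elementary change-of-basis statements between two orthonormal bases of the same finite-dimensional space $\mathrm{Harm}^N_{|N|\dots L}(\Omega)$, so that everything reduces to the orthonormality relations already established in Theorems~\ref{orth} and~\ref{propscar}. I would prove \eqref{basistrans} first and then extract \eqref{g_on_basistrans} from it, while noting that both can also be seen at once from the unitarity of the coefficient matrix.

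For \eqref{basistrans}: by Theorem~\ref{propscar2} the family $\{\mathcal G^N_\alpha\}_{\alpha=1}^{(L+1)^2-N^2}$ is a complete orthonormal basis of $\mathrm{Harm}^N_{|N|\dots L}(\Omega)$, and each ${}_NY_{n,j}$ with $n=|N|,\dots,L$, $j=-n,\dots,n$ lies in this space, so ${}_NY_{n,j} = \sum_\alpha \langle {}_NY_{n,j},\mathcal G^N_\alpha\rangle_{\mathrm L^2(\Omega)}\,\mathcal G^N_\alpha$. The only computation is the coefficient $\langle {}_NY_{n,j},\mathcal G^N_\alpha\rangle_{\mathrm L^2(\Omega)}$: substituting the definition \eqref{spinslep} of $\mathcal G^N_\alpha$, pulling the conjugated scalar factors out of the inner product, and invoking the orthonormality of the spin-weighted spherical harmonics (Theorem~\ref{orth}) collapses the double sum to a single term and yields $\langle {}_NY_{n,j},\mathcal G^N_\alpha\rangle_{\mathrm L^2(\Omega)} = \overline{(G^N_{n,j})_\alpha}$. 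This is exactly \eqref{basistrans}.

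For \eqref{g_on_basistrans}, I would take the $\mathrm L^2(\Omega)$-inner product of the two expansions of ${}_NY_{n,j}$ and ${}_NY_{n',j'}$ furnished by \eqref{basistrans}, apply the orthonormality of the Slepian functions \eqref{gfunc_on}, and read off $\sum_\alpha \overline{(G^N_{n,j})_\alpha}(G^N_{n',j'})_\alpha = \delta_{n,n'}\delta_{j,j'}$; the stated form follows by complex conjugation, since the right-hand side is real. Equivalently --- and this is the cleanest way to see why the relation must hold --- collect the numbers $(G^N_{n,j})_\alpha$ into a square matrix $U$ of size $(L+1)^2-N^2$, indexed by the multi-index $(n,j)$ and by $\alpha$. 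Equation \eqref{g_on} of Theorem~\ref{propscar} states precisely that the columns of $U$ are orthonormal, i.e.\ $U^\ast U = \mathrm{Id}$; since $U$ is square, this forces $U U^\ast = \mathrm{Id}$, and the latter identity, written out, is exactly \eqref{g_on_basistrans}.

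There is no genuine obstacle here, as both identities merely relate two orthonormal bases of one finite-dimensional space and everything is bookkeeping. The only point requiring care is the consistent placement of complex conjugates --- which factor is conjugated in the definition \eqref{spinslep}, in \eqref{g_on}, and in the two inner products --- since a single misplaced conjugate would interchange $(G^N_{n,j})_\alpha$ and $\overline{(G^N_{n,j})_\alpha}$ and produce a relation that is correct only up to conjugation rather than in the stated form.
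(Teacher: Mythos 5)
Your proof is correct and is essentially the argument the paper relies on: the paper gives no explicit proof of Theorem~\ref{propscar3}, instead deferring to the spin-weight-free case in \cite{dahlen2, simonspaper}, where precisely this change-of-basis bookkeeping is carried out. Expanding ${}_NY_{n,j}$ in the orthonormal Slepian basis furnished by Theorem~\ref{propscar2} and extracting the coefficient $\overline{\left(G^N_{n,j}\right)_\alpha}$ via Theorem~\ref{orth}, then obtaining \eqref{g_on_basistrans} either from \eqref{gfunc_on} or --- more cleanly, as you note --- from the fact that \eqref{g_on} says the square coefficient matrix $U$ satisfies $U^\ast U=\mathrm{Id}$ and hence $UU^\ast=\mathrm{Id}$, is the standard route, and your placement of the complex conjugates is consistent with \eqref{spinslep}.
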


\begin{theorem}\label{propscar4}
The spin-weighted Slepian functions also fulfill the following properties
\begin{align}
\sum_{\alpha=1}^{(L+1)^2-N^2} \lambda_\alpha \left(G^N_{n,j}\right)_\alpha \overline{\left(G^N_{n',j'}\right)_\alpha} &= K^N_{nj,n'j'}, \label{on_sca_weighted}\\
\sum_{\alpha=1}^{(L+1)^2-N^2} \lambda_\alpha \mathcal G^N_\alpha(\xi) \overline{ \mathcal G^N_\alpha(\eta)} &= \sum_{n=\vert N\vert}^L\sum_{j=-n}^n \sum_{n'=\vert N\vert}^L\sum_{j'=-n'}^{n'} {}_NY_{n,j}(\xi) K^N_{nj,n'j'}\ \overline{{}_NY_{n',j'}(\eta)}\label{og_sca_weighted}
\end{align}
for all $n,n'=\vert N \vert,\dots,L$, all $j=-n,\dots,n$ ,all $j'=-n',\dots,n'$, and all $\xi,\eta\in\Omega$.
\end{theorem}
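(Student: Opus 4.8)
The plan is to establish \eqref{on_sca_weighted} first, as the spectral (eigen-) decomposition of the Hermitian kernel matrix $K^N$, and then to obtain \eqref{og_sca_weighted} by substituting this identity into the definition \eqref{spinslep} of the Slepian functions. Both steps are algebraic consequences of results already proven above, so I expect the work to be bookkeeping rather than genuine difficulty.

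First I would recall from Lemma~\ref{lemmareal} that $K^N$ is Hermitian and positive definite, and from Problem~\ref{probscar2} that the coefficient vectors $G^N_\alpha$ satisfy $K^N G^N_\alpha = \lambda_\alpha G^N_\alpha$. By Theorem~\ref{propscar}, equation \eqref{g_on}, the $(L+1)^2-N^2$ vectors $G^N_\alpha$ are orthonormal and therefore form an orthonormal basis of $\mathbb C^{(L+1)^2-N^2}$; equivalently, the matrix $V$ whose columns are the $G^N_\alpha$ is unitary, so $V^{\mathrm H}=V^{-1}$. Writing the eigenvalue relations jointly as $K^N V = V\,\mathrm{diag}(\lambda_\alpha)$ and multiplying on the right by $V^{\mathrm H}$ gives $K^N = V\,\mathrm{diag}(\lambda_\alpha)\,V^{\mathrm H}$, whose $(nj,n'j')$ entry is precisely $K^N_{nj,n'j'}=\sum_\alpha \lambda_\alpha (G^N_{n,j})_\alpha\,\overline{(G^N_{n',j'})_\alpha}$. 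This is exactly \eqref{on_sca_weighted}; one may equivalently read it off as the eigenvalue-weighted analogue of the completeness relation \eqref{g_on_basistrans}.

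For \eqref{og_sca_weighted} I would insert the expansion \eqref{spinslep}, namely $\mathcal G^N_\alpha=\sum_{n,j}(G^N_{n,j})_\alpha\,{}_NY_{n,j}$, into the left-hand side. Expanding $\mathcal G^N_\alpha(\xi)\,\overline{\mathcal G^N_\alpha(\eta)}$ produces a double sum over $(n,j)$ and $(n',j')$ of ${}_NY_{n,j}(\xi)\,\overline{{}_NY_{n',j'}(\eta)}$ weighted by $(G^N_{n,j})_\alpha\,\overline{(G^N_{n',j'})_\alpha}$. Since every index set is finite, I can interchange the summation over $\alpha$ with those over the harmonic indices; carrying out the $\alpha$-sum first and recognizing $\sum_\alpha\lambda_\alpha (G^N_{n,j})_\alpha\,\overline{(G^N_{n',j'})_\alpha}=K^N_{nj,n'j'}$ from the already-proven \eqref{on_sca_weighted} collapses the expression into the claimed right-hand side.

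I do not anticipate a real obstacle: both identities reduce to the spectral theorem for the Hermitian matrix $K^N$ together with the linearity of the Slepian expansion. The only points requiring care are the conjugation conventions, where one uses that $K^N$ is Hermitian (so $\overline{K^N_{nj,n'j'}}=K^N_{n'j',nj}$) to pass between the eigenvalue equation and the outer-product form, and the verification that the $G^N_\alpha$ genuinely constitute a full orthonormal basis so that the unitary diagonalization $K^N=V\,\mathrm{diag}(\lambda_\alpha)\,V^{\mathrm H}$ is available; both are guaranteed by Lemma~\ref{lemmareal} and \eqref{g_on}. All interchanges of summation are legitimate because the index sets are finite.
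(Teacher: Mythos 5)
Your proposal is correct: the spectral decomposition $K^N = V\,\mathrm{diag}(\lambda_\alpha)\,V^{\mathrm H}$, justified by Lemma~\ref{lemmareal} and the orthonormality \eqref{g_on}, yields \eqref{on_sca_weighted}, and substituting \eqref{spinslep} with finite-sum interchange yields \eqref{og_sca_weighted}. The paper itself omits the proof, deferring to the spin-weight-free literature, where exactly this eigenvector-completeness argument is used, so your route is essentially the same as the intended one.
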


\noindent As we mentioned above, eigenvalues of spin-weighted Slepian functions often
cluster around $\lambda^N \approx 1$ and $\lambda^N \approx 0$. The
eigenvalue number at which this transition takes place can be
predicted by the Shannon number, which we derive for the spin-weighted
Slepian functions in the next section.

\subsection{Shannon Number}\label{scaslepshannon}

If the eigenvalues of a matrix $K^N$ have a bimodal distribution with
clusters at 1 and 0, then the Shannon number
\begin{equation*} S^N=\sum_{\alpha =1}^{(L+1)^2-N^2} \lambda_\alpha
 = \mathrm{tr}\left(K^N\right),\end{equation*} predicts the
number of eigenvalues close to 1. Therefore, $S^N$ 
provides an estimation of the dimension of the space of signals
of spin weight $N\in\mathbb Z$ that are both bandlimited by $L$
 and optimally concentrated in $R$. The basis of this space
is given by the eigenfunctions $\mathcal G^N_1,\mathcal
G^N_2,\dots,\mathcal G^N_{S^N}$.

\begin{lemma}
 The Shannon number $S^N$ of spin-weighted Slepian functions and
hence the trace of the matrix $K^N$ only depends on the bandwidth~$L$,
the spin weight~$N$, and the area $A$ of the region $R$ on the unit
sphere 
\begin{equation*} S^N=\left((L+1)^2-N^2\right) \ \frac{A}{4\pi}. \end{equation*}
\end{lemma}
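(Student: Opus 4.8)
The plan is to compute the trace of $K^N$ directly and to recognize that its diagonal entries are precisely the regional $\mathrm L^2$-energies of the individual spin-weighted spherical harmonics. First I would write, using the definition of the Shannon number already recorded in the statement,
\begin{equation*}
S^N = \mathrm{tr}\left(K^N\right) = \sum_{n=\vert N\vert}^L \sum_{j=-n}^n K^N_{nj,nj},
\end{equation*}
and recall from the definition of the kernel matrix in Section~\ref{scaslepderiv} that each diagonal entry equals $K^N_{nj,nj} = \int_R {}_NY_{n,j}(\xi)\,\overline{{}_NY_{n,j}(\xi)}\ \mathrm d\omega(\xi) = \int_R \vert {}_NY_{n,j}(\xi)\vert^2\ \mathrm d\omega(\xi)$, which is real and nonnegative.

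Next I would interchange the (finite) summation with the integral over $R$, obtaining
\begin{equation*}
S^N = \int_R \sum_{n=\vert N\vert}^L \sum_{j=-n}^n \vert {}_NY_{n,j}(\xi)\vert^2\ \mathrm d\omega(\xi).
\end{equation*}
The crucial observation is that the inner sum over $j$ is governed by the addition theorem: Corollary~\ref{addtheo} yields $\sum_{j=-n}^n {}_NY_{n,j}(\xi)\,\overline{{}_NY_{n,j}(\xi)} = \frac{2n+1}{4\pi}$, independently of $\xi$ and of $N$. Substituting this collapses the integrand to the pointwise constant $\sum_{n=\vert N\vert}^L \frac{2n+1}{4\pi}$.

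It then remains to evaluate the degree sum. Using the elementary identity $\sum_{n=0}^L (2n+1) = (L+1)^2$ twice, I obtain $\sum_{n=\vert N\vert}^L (2n+1) = (L+1)^2 - \vert N\vert^2 = (L+1)^2 - N^2$, so that the integrand equals the constant $\frac{(L+1)^2-N^2}{4\pi}$. Since this no longer depends on $\xi$, the integral over $R$ merely multiplies it by the area $A = \int_R \mathrm d\omega(\xi)$, yielding $S^N = \left((L+1)^2-N^2\right)\frac{A}{4\pi}$, as claimed.

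I do not anticipate any genuine obstacle here: the whole argument is driven by the addition theorem (Corollary~\ref{addtheo}), which renders the summed squared moduli position-independent and thereby reduces the computation of the trace to the measurement of the area of $R$. The only point requiring minor care is the bookkeeping of the degree sum, namely that the lower summation limit $\vert N\vert$ removes exactly the first $N^2$ terms of the full series $\sum_{n=0}^L(2n+1)$, which is what produces the $-N^2$ correction to the spin-weight-free count $(L+1)^2$.
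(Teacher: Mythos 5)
Your proposal is correct and follows essentially the same route as the paper's own proof: express the trace as the sum of the diagonal entries $K^N_{nj,nj}$, interchange the finite sum with the integral over $R$, invoke the addition theorem (Corollary~3.7 in the paper) to collapse $\sum_{j=-n}^n \vert {}_NY_{n,j}(\xi)\vert^2$ to the constant $\frac{2n+1}{4\pi}$, and evaluate $\sum_{n=\vert N\vert}^L (2n+1) = (L+1)^2 - N^2$. Your additional remarks on the nonnegativity of the diagonal entries and the bookkeeping of the truncated degree sum are accurate but not needed beyond what the paper records.
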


\begin{proof}
Corollary~\ref{addtheo} from the addition theorem yields 
\begin{align*}
S^N&= \sum_{n=\vert N\vert}^L \sum_{j=-n}^n K^N_{nj,nj}\\
&=\int_R \sum_{n=\vert N\vert}^L \sum_{j=-n}^n \overline{{}_NY_{n,j}(\xi)} \ {}_NY_{n,j}(\xi) \mathrm d\omega(\xi)\\
&= \frac{1}{4 \pi} \ \sum_{n=\vert N\vert}^L (2n+1) \int_R \mathrm d\omega(\xi)\\
&=\left((L+1)^2-N^2\right) \ \frac{A}{4\pi}. \qedhere
\end{align*}
\end{proof}

\noindent As an obvious consequence, the
number of Slepian functions with significant eigenvalues is higher
 for regions with a large area on the unit sphere, than it is for
regions covering a small area. For the special case of a
spherical cap ($b=\cos \theta \leq t \leq 1$), the area satisfies 
\begin{equation*} \frac{A_\mathrm{cap}}{4\pi}=\frac{1-b}{2}.\end{equation*}

\section{Scalar, Vector, and Tensor Slepian Functions}\label{ScVeTeSlep}

 The previously described construction of spin-weighted Slepian functions with the help of Theorem~\ref{relation_tens23} allows us to construct scalar, vector, and tensor Slepian functions. In particular for the tensor Slepian functions for spherical cap regions, this approach presents a previously unknown commuting operator approach.

\subsection{Scalar Slepian Functions}

The scalar Slepian functions have already been well investigated e.g.~in \cite{sneeuw, simons, simonspaper}. From the definition of the spin-weighted spherical harmonics and, consequently, from Theorem \ref{relation_tens23}, we know that the spin-weighted spherical harmonics of spin weight zero are the fully normalized spherical harmonics. We therefore obtain the scalar Slepian functions directly from the spin-weighted Slepian functions with spin weight zero.

\subsection{Vector Slepian Functions}

 We revisit the vector Slepian functions presented by Jahn and Bokor \cite{jahn2012} and Plattner and Simons \cite{plattner} by constructing them using the spin-weighted spherical-harmonic approach. A commuting operator using the classical approach was presented in \cite{jahn}. Here we build an alternative vector spherical-harmonic basis using the spin-weighted spherical harmonics.
\begin{defi}\label{swhbvf}
 The spin-weighted harmonic-based vector functions with bandlimit~$L$ are 
\begin{align*}
y_{n,j}^1(\xi)&:=y_{n,j}^{(1)}(\xi)=\xi Y_{n,j}(\xi),\\
y_{n,j}^2(\xi)&:=\frac{1}{\sqrt 2} \left(-y_{n,j}^{(2)}(\xi)+iy_{n,j}^{(3)}(\xi)\right)=\tau_+ \ {}_{+1}Y_{n,j}(\xi),\\
y_{n,j}^3(\xi)&:=-\frac{1}{\sqrt 2} \left(-y_{n,j}^{(2)}(\xi)-iy_{n,j}^{(3)}(\xi)\right)=\tau_- \ {}_{-1}Y_{n,j}(\xi),
\end{align*}
for $\xi\in\Omega$ and $n=0_i,\ldots,L$, where $-n\leq j\leq n$, with
\begin{equation*} 0_i=\begin{cases} 0& , i=1\\ 1& ,i=2,3 \end{cases}. \end{equation*}
\end{defi}

\begin{remark}\label{vecbasis}
The functions in Definition~\ref{swhbvf} form an orthonormal basis of $\mathrm{harm}_{0\dots L}(\Omega)$. Moreover,
 \begin{equation*}
 y_{n,j}^i(\xi) \cdot \overline{y_{m,l}^k(\xi)} = 0, \qquad \text{ if }i\neq k\text{, for all } \xi\in\Omega \text{ and all }n,j,m,l\quad \text{ (pointwise orthogonality)},
 \end{equation*}
where $\cdot$ denotes the standard inner product.
\end{remark}
\noindent Pointwise orthonormality follows from the pointwise orthonormality of $\xi, \tau_+$, and $\tau_-$. The spin-harmonic-based vector functions with maximum degree $L$ form a basis of~$\mathrm{harm}_{0\dots L}(\Omega)$ because of their non-degenerate linear relationship to the functions $y_{n,j}^{(i)}$ for $i\in \{1,2,3\}, n=0_i,\ldots,L$, and $-n\leq j\leq n$, which themselves form a basis of~$\mathrm{harm}_{0\dots L}(\Omega)$, see Definition~\ref{vectorharm}.
\newline
\newline
\noindent We can therefore represent any vector function~$\mathcal g \in \mathrm{harm}_{0\dots L}(\Omega)$ as a linear combination of the spin-weighted harmonic-based vector functions 
\begin{equation*} \mathcal g(\xi)=\sum_{i=1}^3 \sum_{n=0_i}^L \sum_{j=-n}^n g_{n,j}^i y_{n,j}^i(\xi)\label{slepfunc_vec}\end{equation*}
for all $\xi\in\Omega$ with coefficients 
\begin{equation*} g_{n,j}^i=\int_\Omega \mathcal g(\xi) \cdot \overline{y_{n,j}^i(\xi)} \ \mathrm d\omega(\xi)\end{equation*}
for all $(i,n,j)\in J_L$. Here, we used the set of indices
\begin{equation*} J_L:=\left\lbrace (i,n,j) \ \vert \ i=1,2,3; n=0_i,\dots, L; j=-n,\dots ,n\right\rbrace.\end{equation*}

\noindent The spatial concentration problem for bandlimited vector functions, independent of the selected basis, is 
\begin{equation}\label{veccon} \lambda = \frac{\int_R \mathcal g(\xi)\cdot \overline{\mathcal g(\xi)} \ \mathrm d \omega(\xi)}{\int_\Omega \mathcal g(\xi) \cdot \overline{\mathcal g(\xi)} \ \mathrm d\omega(\xi)}=\max.\end{equation}

\noindent The classical vector spherical harmonic functions $y^{(i)}_{n,j}$ for $(i,n,j)\in J_L$ lead to a blockdiagonal matrix, where the normal component is decoupled from the tangential component (see \cite{jahn2012,plattner}). As we show in the following, the pointwise orthogonality of the three types of spin-weighted harmonic-based functions $y^{1}_{n,j}$, $y^{2}_{n,j}$, and $y^{3}_{n,j}$ leads to a blockdiagonal matrix with three blocks. One for the radial component and two for the tangential component. Moreover, due to the decoupling, we can solve the concentration problem for each of the spin weights individually, allowing us to take full advantage of the derivations in previous sections. In particular, the commuting operator solution for spherical caps for spin-weights $0,\pm 1$ translates directly into vector Slepian functions.

\begin{prob}\label{probvec4}
Concentration problem~(\ref{veccon}) expressed in the spin-weighted harmonic-based vector functions from Definition~\ref{swhbvf} yields the eigenvalue problem
\begin{equation*} kg=\lambda g,\end{equation*}
where
\begin{equation*} k:=\begin{pmatrix}
k^1 & 0 & 0\\
0 & k^2 & 0\\
0 & 0 & k^3
\end{pmatrix}:=
\begin{pmatrix}
K^0 & 0 & 0\\
0 & K^{+1} & 0\\
0 & 0 & K^{-1}
\end{pmatrix} \in \mathbb R^{\left\lbrack 3(L+1)^2-2\right\rbrack \times \left\lbrack 3(L+1)^2-2\right\rbrack}\end{equation*}
with the matrices $k^i$ defined by their components
\begin{equation*} k_{nj,n'j'}^i:=\int_R \overline{y_{n,j}^i(\xi)} \cdot y_{n',j'}^i(\xi) \ \mathrm d\omega(\xi). \end{equation*}
\end{prob}

\noindent Hence, the vector problem reduces to three spin-weighted problems for spin weights $0$, $+1$, and $-1$, which we solved in the previous chapter. To find the eigenvectors $g_\alpha$ of Problem \ref{probvec4}, we simply pad the eigenvectors of the spin-weighted problems with zeros. Again, we sort the eigenvectors by decreasing eigenvalues. 
Note that this sorting implies that the types of vectors are not sorted any more. We represent this with a mapping $\tilde{\alpha}:\{1,\dots,3(L+1)^2-2\}\to\{1,\dots,(L+1)^2\}\times\{1,2,3\}$, $\alpha\mapsto(\tilde{\alpha}_1,\tilde{\alpha}_2)$ which associates the number $\alpha$ of a vectorial Slepian function to a vector type (i.e.\ here, a matrix block) $\tilde{\alpha}_2(\alpha)\in\{1,2,3\}$ and the number $\tilde{\alpha}_1(\alpha)$ of a spin-weighted scalar Slepian function. Correspondingly, we obtain the vector Slepian functions $\mathcal g_\alpha$ from the spin-weighted Slepian functions $\mathcal G_{\tilde{\alpha}_1(\alpha)}^{N}$ described in Equation~(\ref{spinslep}) by
\begin{equation*} \mathcal g_\alpha (\xi) =\begin{cases}
 \xi \mathcal G_{\tilde{\alpha}_1(\alpha)}^{0}(\xi) & \text{, if } \tilde{\alpha}_2(\alpha)=1,
 \\
 \tau_+ \mathcal G_{\tilde{\alpha}_1(\alpha)}^{+1}(\xi) & \text{, if } \tilde{\alpha}_2(\alpha)=2
 \\
\tau_- \mathcal G_{\tilde{\alpha}_1(\alpha)}^{-1}(\xi) & \text{, if } \tilde{\alpha}_2(\alpha)=3.
\end{cases}
\end{equation*}

for $\alpha=1,\dots,3(L+1)^2-2$.
\newline
\newline
\noindent We obtain the same Shannon numbers as \cite{plattner}, 
\begin{equation*} S_{\mathrm{vector}}= \left\lbrack 3(L+1)^2-2\right\rbrack \ \frac{A}{4\pi}. \end{equation*}
 For the special case of a spherical cap with $b=\cos \theta \leq t \leq 1$, this yields 
\begin{equation*} S_{\mathrm{vector}}= \left\lbrack 3(L+1)^2-2\right\rbrack \ \frac{1-b}{2}.\end{equation*}

\subsection{Tensor Slepian Functions}\label{tenslep}
 Tensor Slepian functions on the sphere have been investigated by Eshagh \cite{eshagh} with a choice of basis for which, to date, no commuting operator is known. Here, we follow the recipe used for the spin-weighted harmonic-based vector functions to construct tensor Slepian functions for which we derived a commuting operator for polar cap regions in Section~\ref{chapcomcap}. As for the vector case, the first step involves defining a basis of spin-weighted harmonic-based tensor functions. 
\begin{defi}\label{swhbtf}
 The spin-weighted harmonic-based tensor functions with bandlimit $L$ are 
\allowdisplaybreaks
\begin{align*}
\boldsymbol y_{n,j}^1(\xi):&=\boldsymbol y_{n,j}^{(1,1)}(\xi)=\left(\xi\otimes\xi\right) Y_{n,j}(\xi),\\
\boldsymbol y_{n,j}^2(\xi):&=\frac{1}{\sqrt 2}\left(-\boldsymbol y_{n,j}^{(1,2)}(\xi)+i\boldsymbol y_{n,j}^{(1,3)}(\xi)\right)=\left(\xi\otimes\tau_+\right) \ {}_{+1}Y_{n,j}(\xi),\\
\boldsymbol y_{n,j}^3(\xi):&=-\frac{1}{\sqrt 2}\left(-\boldsymbol y_{n,j}^{(1,2)}(\xi)-i\boldsymbol y_{n,j}^{(1,3)}(\xi)\right)=\left(\xi\otimes\tau_-\right) \ {}_{-1}Y_{n,j}(\xi),\\
\boldsymbol y_{n,j}^4(\xi):&=\frac{1}{\sqrt 2}\left(-\boldsymbol y_{n,j}^{(2,1)}(\xi)+i\boldsymbol y_{n,j}^{(3,1)}(\xi)\right)=\left(\tau_+\otimes\xi\right) \ {}_{+1}Y_{n,j}(\xi),\\
\boldsymbol y_{n,j}^5(\xi):&=-\frac{1}{\sqrt 2}\left(-\boldsymbol y_{n,j}^{(2,1)}(\xi)-i\boldsymbol y_{n,j}^{(3,1)}(\xi)\right)=\left(\tau_-\otimes\xi\right) \ {}_{-1}Y_{n,j}(\xi),\\
\boldsymbol y_{n,j}^6(\xi):&=\boldsymbol y_{n,j}^{(2,2)}(\xi)=\frac{1}{\sqrt{2}} \ \boldsymbol{\mathrm i}_{\mathrm{tan}} Y_{n,j}(\xi),\\
\boldsymbol y_{n,j}^7(\xi):&=\boldsymbol y_{n,j}^{(3,3)}(\xi)=\frac{1}{\sqrt{2}} \ \boldsymbol{\mathrm j}_{\mathrm{tan}} Y_{n,j}(\xi),\\
\boldsymbol y_{n,j}^8(\xi):&=-\frac{1}{\sqrt 2}\left(-\boldsymbol y_{n,j}^{(2,3)}(\xi)+i\boldsymbol y_{n,j}^{(3,2)}(\xi)\right)=\left(\tau_+\otimes\tau_+\right) \ {}_{+2}Y_{n,j}(\xi),\\
\boldsymbol y_{n,j}^9(\xi):&=-\frac{1}{\sqrt 2}\left(-\boldsymbol y_{n,j}^{(2,3)}(\xi)-i\boldsymbol y_{n,j}^{(3,2)}(\xi)\right)=\left(\tau_-\otimes\tau_-\right) \ {}_{-2}Y_{n,j}(\xi)
\end{align*}
\noindent for $\xi\in\Omega$ and $n=\boldsymbol 0_i,\ldots,L$, where $-n\leq j\leq n$, with 
\begin{equation*} \boldsymbol 0_i:=\begin{cases} 0& , i=1,6,7\\ 1& ,i=2,3,4,5\\ 2& ,i=8,9 \end{cases}.\end{equation*}
\end{defi}

\begin{remark}\label{tenbasis}
 
 The functions in Definition~\ref{swhbtf} form an orthonormal basis of $\boldsymbol{\mathrm{harm}}_{0\dots L}(\Omega)$. Moreover,
 \begin{equation*}
 \boldsymbol y_{n,j}^i(\xi) : \overline{\boldsymbol y_{m,l}^k(\xi)} = 0,
 \qquad \text{ if }i\neq k\text{ for all } \xi\in\Omega \text{ and all }n,j,m,l
 \end{equation*}
(pointwise orthogonality of different types) where $:$ denotes the tensor inner product.
 
\end{remark}

\noindent Remark~\ref{tenbasis} follows using the same arguments as for Remark~\ref{vecbasis}. 
\newline
\newline
 We can therefore represent any tensor function $ \mathbcal g \in \boldsymbol{\mathrm{harm}}_{0\dots L}(\Omega)$ as a linear combination of the spin-weighted harmonic-based tensor functions 
\begin{equation*} \mathbcal g(\xi)=\sum_{i=1}^9 \sum_{n=\boldsymbol 0_i}^L \sum_{j=-n}^n \boldsymbol g_{n,j}^i \boldsymbol y_{n,j}^i(\xi)\label{slepfunc_tens}\end{equation*}
for all $\xi\in\Omega$ with coefficients 
\begin{equation*} \boldsymbol g_{n,j}^i=\int_\Omega \mathbcal g(\xi) : \overline{\boldsymbol y_{n,j}^i(\xi)} \ \mathrm d\omega(\xi)\end{equation*}
for all $(i,n,j)\in\boldsymbol J_L$, where we define the set of indices by
\begin{equation*} \boldsymbol J_L:=\left\lbrace (i,n,j) \ \vert \ i=1,\dots ,9; n=\boldsymbol 0_i,\dots, L; j=-n,\dots ,n\right\rbrace\end{equation*}

\noindent We can formulate the concentration problem independently of the basis 
\begin{equation}\label{tencon} \lambda = \frac{\int_R\mathbcal g(\xi) : \overline{\mathbcal g(\xi)} \ \mathrm d \omega(\xi)}{\int_\Omega \mathbcal g(\xi) : \overline{\mathbcal g(\xi)} \ \mathrm d\omega(\xi)}=\max.\end{equation}

\noindent Similarly to the vector case, choosing the spin-weighted harmonic-based tensor basis from Definition~\ref{swhbtf} leads to a decoupling of the eigenvalue problem as a result of the pointwise orthogonality of the different types of spin-weighted harmonic-based tensor functions. 

\begin{prob}\label{probtens}
 Concentration problem~(\ref{tencon}) expressed in the spin-weighted harmonic-based tensor functions from Definition~\ref{swhbtf} yields the eigenvalue problem
\begin{equation*} \boldsymbol k\boldsymbol g=\lambda \boldsymbol g,\end{equation*}
\allowdisplaybreaks
where
\begin{align} \boldsymbol k&:=\begin{pmatrix}
\boldsymbol k^1 & 0 & 0 & 0 & 0 & 0 & 0 & 0 & 0 \\
0 & \boldsymbol k^2 & 0 & 0 & 0 & 0 & 0 & 0 & 0 \\
0 & 0 & \boldsymbol k^3 & 0 & 0 & 0 & 0 & 0 & 0 \\
0 & 0 & 0 & \boldsymbol k^4 & 0 & 0 & 0 & 0 & 0 \\
0 & 0 & 0 & 0 & \boldsymbol k^5 & 0 & 0 & 0 & 0 \\
0 & 0 & 0 & 0 & 0 & \boldsymbol k^6 & 0 & 0 & 0 \\
0 & 0 & 0 & 0 & 0 & 0 & \boldsymbol k^7 & 0 & 0 \\
0 & 0 & 0 & 0 & 0 & 0 & 0 & \boldsymbol k^8 & 0 \\
0 & 0 & 0 & 0 & 0 & 0 & 0 & 0 & \boldsymbol k^9
\end{pmatrix}\nonumber\\
&:=
\begin{pmatrix}
K^0 & 0 & 0 & 0 & 0 & 0 & 0 & 0 & 0\\
0 & K^{+1} & 0 & 0 & 0 & 0 & 0 & 0 & 0\\
0 & 0 & K^{-1} & 0 & 0 & 0 & 0 & 0 & 0\\
0 & 0 & 0 & K^{+1} & 0 & 0 & 0 & 0 & 0\\
0 & 0 & 0 & 0 & K^{-1} & 0 & 0 & 0 & 0\\
0 & 0 & 0 & 0 & 0 & K^0 & 0 & 0 & 0\\
0 & 0 & 0 & 0 & 0 & 0 & K^0 & 0 & 0\\
0 & 0 & 0 & 0 & 0 & 0 & 0 & K^{+2} & 0\\
0 & 0 & 0 & 0 & 0 & 0 & 0 & 0 & K^{-2}
\end{pmatrix}\in \mathbb R^{\left\lbrack 9(L+1)^2-12\right\rbrack \times \left\lbrack 9(L+1)^2-12\right\rbrack}\label{Eq:Matrblocktens}\end{align}
with the matrices $\boldsymbol k^i$ given by their components
\begin{equation*} \boldsymbol k_{nj,n'j'}^i:= \int_R \overline{\boldsymbol y_{n,j}^i(\xi)} : \boldsymbol y_{n',j'}^i(\xi) \ \mathrm d\omega(\xi). \end{equation*}
\end{prob}

\noindent The tensor problem reduces to nine spin-weighted problems corresponding to spin weights $0$, $+1$, $-1$, $+2$, and $-2$, which we solved in Section \ref{Sec:SPWSlepFct}. To find the eigenvectors $\boldsymbol g_\alpha$ of Problem~\ref{probtens}, we simply pad the eigenvectors of the spin-weighted problems with zeros. As is customary for Slepian functions, we sort the eigenvectors by decreasing eigenvalues and mix the different types. Similarly as in the vectorial case, we need again an index mapping $\tilde{\alpha}:\{1,\dots,9(L+1)^2-12\}\to\{1,\dots,(L+1)^2\}\times\{1,\dots,9\}$, where now $\tilde{\alpha}_2(\alpha)$ refers to the tensor type, i.e.\ the block of the matrix in \eqref{Eq:Matrblocktens}. We obtain the tensor Slepian functions $\mathbcal g_\alpha $ from the spin-weighted Slepian functions $\mathcal G_{\tilde{\alpha}_1(\alpha)}^N$ described in Equation~(\ref{spinslep}) by multiplying them with the corresponding unit tensor based on the tensor function type $i \in \{1,\ldots,9\}$. 
\begin{equation*}\label{SlepTensTypes} \mathbcal g_\alpha (\xi) =\begin{cases}
 \left(\xi\otimes\xi\right) \mathcal G^0_{\tilde{\alpha}_1(\alpha)} (\xi)& \text{, if }\tilde{\alpha}_2(\alpha)=1\\
 \left(\xi\otimes\tau_+\right) \mathcal G^{+1}_{\tilde{\alpha}_1(\alpha)} (\xi)& \text{, if }\tilde{\alpha}_2(\alpha)=2\\
 \left(\xi\otimes\tau_-\right) \mathcal G^{-1}_{\tilde{\alpha}_1(\alpha)} (\xi)& \text{, if }\tilde{\alpha}_2(\alpha)=3\\
 \left(\tau_+\otimes\xi\right) \mathcal G^{+1}_{\tilde{\alpha}_1(\alpha)} (\xi)& \text{, if }\tilde{\alpha}_2(\alpha)=4\\
 \left(\tau_-\otimes\xi\right) \mathcal G^{- 1}_{\tilde{\alpha}_1(\alpha)} (\xi)& \text{, if }\tilde{\alpha}_2(\alpha)=5\\
 \frac{1}{ \sqrt{2}} \ \boldsymbol{\mathrm i}_{\mathrm{tan}} \mathcal G^0_{\tilde{\alpha}_1(\alpha)} (\xi)& \text{, if }\tilde{\alpha}_2(\alpha)=6\\
 \frac{1}{ \sqrt{2}} \ \boldsymbol{\mathrm j}_{\mathrm{tan}} \mathcal G^0_{\tilde{\alpha}_1(\alpha)} (\xi)& \text{, if }\tilde{\alpha}_2(\alpha)=7\\
 \left(\tau_+\otimes\tau_+\right) \mathcal G^{+2}_{\tilde{\alpha}_1(\alpha)} (\xi)& \text{, if }\tilde{\alpha}_2(\alpha)=8\\
 \left(\tau_-\otimes\tau_-\right) \mathcal G^{-2}_{\tilde{\alpha}_1(\alpha)} (\xi)& \text{, if }\tilde{\alpha}_2(\alpha)=9
\end{cases}
\end{equation*}

\noindent We obtain the Shannon number
\begin{equation*} S_{\mathrm{tensor}}= \left\lbrack 9(L+1)^2-12 \right\rbrack \ \frac{A}{4\pi} \end{equation*}
 for general domains, and 
\begin{equation*} S_{\mathrm{tensor}}= \left\lbrack 9(L+1)^2-12 \right\rbrack \ \frac{1-b}{2},\end{equation*}
for the spherical cap with $b=\cos \theta \leq t \leq 1$.

\section{Conclusions}

Scalar and vector Slepian functions on the sphere have proven to
be a useful tool in a variety of studies. The construction of Slepian
functions can be rendered numerically stable and computationally
efficient through an aptly designed commuting operator. In this
article, we contributed to the understanding of Slepian functions in
two ways: (i) we presented a unified approach for constructing Slepian
functions for arbitrary rank tensors and (ii) we designed commuting
operators for polar caps for arbitrary rank tensor Slepian
functions. For the tensor Slepian functions, no such commuting
operator had been known.

\subsection{Summary of the construction} 

Designing the spin-weighted Slepian functions required us to
 solve the Slepian concentration problem for
 the spin-weighted spherical harmonics of general spin
weight $N$. We reformulated the concentration problem as a
 spin-weighted eigenvalue problem $K^NG^N=\lambda
G^N$. Furthermore, we derived a spin-weighted Shannon
number allowing for the estimation of the number of
eigenvalues close to~$1$ and hence the number of Slepian
functions that are well concentrated within the region of
interest.
\newline
\newline
 For the spin-weighted kernel function $\mathcal K^N$ for polar
cap regions we derived a commuting operator $\mathcal I^N$ 
leading to a tridiagonal matrix $I^N$ which commutes with the kernel
matrix $K^N$ and which has simple eigenvalues. As a result, the
eigenvectors of $I^N$ (which are numerically stable and
computationally inexpensive to compute) are equal to the eigenvectors
of $K^N$ (which are the coefficients for the spin-weighted
spherical-harmonic-based Slepian functions).
\newline
\newline
We used the linear relationships between the
spin-weighted spherical harmonics and the scalar
spherical harmonics, the vector spherical harmonics by Hill
\cite{hill}, and the tensor spherical harmonics by Freeden, Gervens,
and Schreiner \cite{freedenpaper} to design the corresponding Slepian
functions on the sphere. For the tensor Slepian functions, this work
presents the first construction of a commuting operator. 

\subsection{Outlook}

We presented a framework for the construction of tensor Slepian
functions for the spherical cap as well as for 
arbitrary measurable domains but not implemented it. 
Development of
software to construct tensor Slepian functions for arbitrary domains
hence remains an open problem. If such a software were to be designed
to solve the concentration problem for general spin-weighted
spherical-harmonics, then this would allow the construction of tensor
Slepian functions for arbitrary ranks and arbitrary regions. 
Moreover, the construction of a commuting operator for the polar
 double cap and belt for the vector and tensor Slepian functions
is unknown at present. While the Shannon number does provide an
estimation for the number of well-concentrated Slepian functions, it
typically overestimates that number. Hence a better constraint on the
number of well-concentrated Slepian functions would provide a valuable
contribution. As another potential avenue for future research, tensor
Slepian functions could form part of a dictionary-based method for
tensor-valued inverse problems in analogy to \cite{FischerMichel2012,MichelTelschow2016,roger}.
\newline
\newline
 To date, tensor Slepian functions have not been used to invert
for potential field models from second-derivative data such as, for
example, for gravity potential from the satellite mission GOCE,
or to invert for cosmic microwave background polarization. As
is shown in \cite{plattnerAltCog}, when inverting for
potential field models on the planet's surface from satellite data,
the spatially concentrated spectrally limited Slepian functions are
not well suited, as they are typically poorly conditioned under
downward continuation (as a result of each function including a wide
range of spherical-harmonic degrees). The approach of solving a
related, continuation-cognizant problem as in \cite{plattnerAltCog}
could be translated to the tensor Slepian case.

\bibliography{Literatur}
\bibliographystyle{abbrv}

\end{document}